\newcommand{\R}{{\mathbb R}}
\newcommand{\bv}{{\mathbf v}}
\newcommand{\curl}{\mbox{\textnormal{curl }}}
\theoremstyle{remark}
\newtheorem{remark}{Remark}
\theoremstyle{theorem}
\newtheorem{proposition}{Proposition}
\newtheorem{lemma}{Lemma}
\newtheorem{theorem}{Theorem}
\newtheorem*{merci}{Acknowledgements}
\DeclareMathOperator{\divg}{div}
\begin{document}
\title[Two-dimensional  surface waves Boussinesq systems]{Well-posednesss of strongly dispersive two-dimensional  surface waves Boussinesq systems}
\author{Felipe Linares}
\address{ IMPA\\ Estrada Dona Castorina 110\\ Rio de Janeiro 22460-320, RJ Brasil}
\email{ linares@impa.br}

\author{Didier Pilod}
\address{Instituto de Matem\' atica, UFRJ, Caixa Postal 68530 CEP 21941-97, Rio de Janeiro, RJ Brasil}
\email{didier@im.ufrj.br, pilod@impa.fr}
\author{Jean-Claude Saut}
\address{Laboratoire de Math\' ematiques, UMR 8628,\\
Universit\' e Paris-Sud et CNRS,\\ 91405 Orsay, France}
\email{jean-claude.saut@math.u-psud.fr}
\date{April 11, 2011}
\maketitle
\begin{abstract}
%\textit{Abstract}
We consider in this paper the well-posedness for the Cauchy problem associated to two-dimensional dispersive systems
of Boussinesq type which model weakly nonlinear long wave surface waves. We emphasize the case of the {\it strongly dispersive}
ones with focus on the \lq\lq KdV-KdV" system which possesses the strongest dispersive properties and which is a vector two-dimensional
extension of the classical KdV equation.
\end{abstract}

\section{Introduction}

\subsection{General Setting}

In general, nonlinear dispersive equations and systems are not derived from first principles. They are obtained as asymptotic
models (normal forms) when some {\it small} parameter tends to zero, of more general systems, under a suitable scaling. There
are supposed to describe the dynamics under suitable scaling conditions. We will study in this article {\it small amplitude,
long wave models} which have the general form

\begin{equation}\label{general}
U_t+AU+\epsilon F(U, \nabla U)+\epsilon LU=0.
\end{equation}

Here $\epsilon$ is a \lq\lq small\rq\rq \ parameter which takes into
account the nonlinear and dispersive effects, which are assumed of
the same order. $U=U(x,t)$ is a vector in $\R^{n+1}$, $n=1,2$ and
$x\in \R^n,$ $t>0.$ The order zero part $U_t+AU$ is just the linear
wave equation, while the third and fourth terms in \eqref{general}
represents the nonlinear and dispersive effects. The right-hand side
of \eqref {general} should in fact be $O(\epsilon ^2)$.

A typical example is that of the {\it abcd} Boussinesq systems for long, small amplitude gravity surface water waves
introduced in \cite {BCS1}, \cite{BCL}

 \begin{equation}
    \label{abcd}
    \left\lbrace
    \begin{array}{l}
    \eta_t+\nabla \cdot {\bf v}+\epsilon \lbrack\nabla\cdot(\eta {\bf v})+a \nabla\cdot \Delta{\bf v}-b\Delta \eta_t\rbrack=0 \\
    {\bf v}_t+\nabla \eta+\epsilon\lbrack \frac{1}{2}\nabla |{\bf v}|^2+c\nabla \Delta \eta-d\Delta {\bf v}_t\rbrack=0.

    \end{array}\right.
    \end{equation}

    Here, $\eta$ is the deviation  of the free surface from its rest state and ${\bf v}$ is an $O(\epsilon^2)$ approximation of the horizontal velocity taken at a
    certain depth (see \cite{BCS1}, \cite{BCL}). The constants $a, b, c, d$ are modeling parameters subject to the constraint $a+b+c+d=\frac{1}{3}.$ Those three
    degrees of freedom arise from the choice of the height at which the velocity is taken and from a double use of the {\it BBM} trick.

    The small parameter $\epsilon$ is defined by

    $$\epsilon=a/h\sim (h/\lambda)^2,$$

    where $h$ denotes the mean depth of the fluid, $a$ a typical amplitude of the wave and $\lambda$ a typical horizontal wavelength.

    Those systems are  approximations of the full water wave system in the so-called {\it Boussinesq regime} (see \cite{BCS1}, \cite{BCL}).
    They degenerate into the KdV (or BBM) equation in the one-dimensional case, for waves traveling in one direction. They also appear as
    models for {\it internal waves}, in an appropriate regime (see \cite{BLS}, \cite{CGK}). When surface tension effects are taken into
    account, the coefficient $c$ should be changed into $c-\tau$ where $\tau\geq 0$ is the surface tension parameter (see \cite{DD}).
    Throughout the paper we will consider only the case of purely gravity waves, that is $\tau =0.$

    Of course restrictions are to be imposed on  $a, b, c, d$ in order that the {\it linear} part of \eqref{abcd} be well-posed. It was
    established in \cite{BCS2} that, when $n=1$, all the linearly well-posed systems are {\it locally} nonlinearly well-posed. As for the
    two-dimensional case ($n=2$),  it has been proved in  \cite{DMS1} that in the  {\it generic case} where $b>0,\; d>0$ one has
    well-posedness on time scales of order $O(\frac{1}{\sqrt{\epsilon}})$ for data in the Sobolev space  $H^1$. The local well-posedness
    in other cases (but not in the {\it strongly dispersive} \lq\lq KdV-KdV" case) was proved in \cite{CTA}, but the question of the dependence
    of the time existence with respect to $\epsilon$ was not addressed there.

     One of the goals of the present paper is to complete the local well-posedness theory for $a, b, c, d$ systems in two dimensions,
     with focus on the size of the lifespan of the solutions with respect to $\epsilon$.

     An interesting fact (already noticed in \cite{BCS1}) from the PDE view point is that, though the   $a, b, c, d$ systems
     describe the same wave propagation phenomenon, their {\it dispersive} properties are quite different. We will precise the
     dispersion matrix in Section 2. After diagonalization, one is led to two-dimensional systems with {\it strong dispersion}
     (of KdV or Schr\"{o}dinger type) or with {\it weak dispersion} (for instance of BBM type). One also obtains a system which
     can be viewed as a dispersive perturbation of the two-dimensional Saint-Venant (shallow-water) system, generalizing the one-dimensional
     system studied by Amick \cite{A} and Schonbek \cite{Sc}.

On the other hand, the solutions of the Cauchy problem associated to \eqref{abcd} should exist on time scales of order
$O(\frac{1}{\epsilon})$ in order to  prove that the asymptotic system is a good approximation of the water wave system \cite{AL}.
In fact, the solutions of \eqref{abcd} cease to be relevant as approximations of those of the original system on time scales larger
than $O(\frac{1}{\epsilon^2}).$

It is worth noticing that it is unlikely that all the {\it abcd} systems would have {\it global solutions}. In fact they are shown
to be Hamiltonian (and thus possess a formally conserved energy) only when $b=d$ (see \cite{BCS1}). This can be used in the one-dimensional
case to obtain the global well-posedness of the Cauchy problem for a few of the systems  see \cite{BCS2} but this situation is exceptional
and moreover does  not apply to the two-dimensional case where the Hamiltonian does not control any Sobolev type norm.

This situation is in strong contrast with {\it one-directional} models such as the Korteweg- de Vries (KdV) or Benjamin-Bona-Mahony (BBM)
equations or with {\it quasi one-directional} models such as the Kadomtsev-Petviashvili (KP) equation where global a priori bounds are
available (possibly at low degree of regularity), which allows to prove the global well-posedness of the Cauchy problem.

As far as we know the problem of solving {\it any} of the systems \eqref{abcd} on time intervals of order $O(\frac{1}{\epsilon})$
has been completely open until very recently, except for some one-dimensional ones which have global solutions (under some restrictions
on the initial data).

In \cite{BCL} Bona , Colin and Lannes obtained  an order $O(\frac{1}{\epsilon})$ existence interval for a {\it fully symmetric}
class of systems, in one and two dimensions. Those systems have a skew-adjoint linear part ($a=c$) and are obtained from \eqref{abcd}
via the nonlinear change of variables $\tilde{{\bf v}}={\bf v}(1+\frac{\epsilon}{2}\eta)$  which does not affect the linear part
(modulo higher order terms in $\epsilon$) and which symmetrizes the nonlinear part. Neglecting the higher order terms in $\epsilon$,
one obtain skew-adjoint perturbations of symmetric quasilinear hyperbolic systems and the classical theory of  this kind of systems
provides the  $O(\frac{1}{\epsilon})$ existence time. This method does not use the dispersive part of the systems and of course does
not solve the long time existence problem for the {\it abcd} systems.

In \cite{MSZ} an approach   based on a Nash-Moser theorem is developed  to prove well-posedness results  for the 2D- Boussinesq
systems \footnote{This approach gives also similar results in the one dimensional case.} on time intervals of order $1/\epsilon$.
In  \cite{MSZ}  the {\it generic case}, that is when $b>0, d>0, a<0, c<0$ and  the {\it BBM/BBM} case, that is when $b>0, d>0, a=c=0$
are emphasized but the approach could very likely be applied to other cases . This method does not use the dispersive part of the systems
and  a high regularity level in required on the initial data, with loss of regularity on the solution.

The aim of the present paper is to prove the well-posedness for the
most dispersive of the two-dimensional Boussinesq systems
\eqref{abcd} on time scales of order $\epsilon^{-1/2},$ using the
dispersive properties of the systems. This allows to consider
relatively rough initial data. We will in particular obtain {\it
uniformly bounded in $\epsilon$} solutions on time scales of order
$T/\sqrt{\epsilon}$ in suitable Sobolev spaces, achieving the
rigorous justification of those models on the corresponding time
scales (see \cite{AL}). Note however that we are not able in our
functional setting to reach the optimal time scales
$O(\frac{1}{\epsilon})$.

The heart of the paper concerns the most dispersive Boussinesq
system (the {\it KdV-KdV} system) where $a=c=1/6, b=d=0$ which is an
interesting two-dimensional extension of the Korteweg-de Vries
equation \footnote{There are relatively few physically relevant
systems of this form.} for which the local well-posedness is not a
simple matter. Our method deeply lies on dispersive estimates for
the underlying linear problem. In particular we establish new
Strichartz and maximal function estimates.

 For the other cases (which are less dispersive), at least when $d>0,$
 \footnote{ When $d=0$ it is unclear whether the local well-posedness can be obtained by elementary energy methods in the $2D$ case
 without imposing the irrotationality of the velocity.} the proofs are the extension to the two-dimensional case of those given by
 energy estimates in \cite{BCS2} for the one-dimensional case, keeping track of the $\epsilon$'s but we do not pursue this issue here.
 We will focus instead on the order $2$ Boussinesq systems which can be written as  systems of nonlinear nonlocal Schr\"{o}dinger type
 equations coupled by the nonlinear terms.
 Some of those  cases have been studied in \cite{CTA} but the  $\epsilon$ dependence of the existence time interval was not addressed
 here and the difficulty linked to the case $d=0$  was underestimated.

\subsection{Organization of the paper}
The paper is organized as follows.

Section 2 recall briefly some useful facts on the Boussinesq systems.

 In Section 3 we will consider the {\it KdV-KdV} Boussinesq system, and establish the  well-posedness in the two-dimensional case,
 on time intervals of order $ \epsilon^{-1/2}, $ after establishing various dispersive estimates for the solutions of  the linear part.

 In Section 4, we consider  the well-posedness of the remaining {\it strongly dispersive} cases ( Schr\"{o}dinger type) to prove
 that there are also well-posed on time intervals of order $ \epsilon^{-1/2}. $ There are essentially two cases, depending whether
 $b$ or $d$ vanishes.

 %For most of the cases the local well-posedness has been established by Cung The Anh in \cite{CTA}  by ``elementary" energy methods (note again that such methods do not seem to work for the KdV/KdV case). We merely revisit his results keeping track of the $\epsilon's$.

 %TO BE COMPLETED

\subsection{Notations}

For any positive numbers $a$ and $b$, the notation $a \lesssim b$
means that there exists a positive constant $c$ such that $a \le c
b$. $C$ or $c$ will also denote various positive constants
independent of $\epsilon$.

We denote the horizontal variables by $x$ when $n=1$ and by
$x=(x_1,x_2)$ when $n=2$. We will also denote by $\cdot$ the
euclidian scalar product of two vectors $x=(x_1,x_2)$ and
$y=(y_1,y_2)$ of $\mathbb R^2$, which is to say $x\cdot
y=x_1y_1+x_2y_2$, and by $|x|$ the euclidian norm of  $x=(x_1,x_2)$,
\textit{i.e.} $|x|=\sqrt{x_1^2+x_2^2}$.

We use the Fourier multiplier notation: $f(D)u$ is defined as
${\mathcal F}(f(D)u)(\xi)=f(\xi) \widehat{u}(\xi)$, where
$\mathcal{F}$ and $\widehat{\cdot}$ stand for the Fourier transform,
which is defined by
$${\mathcal F}(\phi)(\xi)=\widehat{\phi}(\xi)=\frac{1}{2\pi}\int_{\mathbb
R^2}\phi(x)e^{-ix\cdot\xi}dx,$$
for any function $\phi:\mathbb R^2 \rightarrow \mathbb C$.

For $s \in \mathbb R$, we define the Bessel and Riesz potentials of
order $-s$ by $\Lambda^s=(1+\vert D\vert^2)^{s/2}$ and $D^s=|D|^s$.
Moreover, $\sqrt{-\Delta}$ will denote the Fourier multiplier of
symbol $|\xi|$. Observe that $\sqrt{-\Delta}=D^1$.

Let $R_j$ be the Riesz transforms, defined \textit{via} Fourier
transform by $R_j\phi=\left( -i
\frac{\xi_j}{|\xi|}\widehat{\phi}\right)^{\vee},$ for  $j=1, \ 2.$

The divergence operator will be denoted by $\nabla\cdot$ or $\divg{}.$

We denote by $\|\cdot\|_{L^p}$ ($1\leq p\leq\infty$) the standard
norm of the Lebesgue spaces $L^p(\R^n)$ ($n=1,2$) and $(\cdot
,\cdot)$ the scalar product in $L^2$.

If $\bv=(v_1,v_2)^T\in L^2(\R^2)^2$, then we write
$\|\bv\|_{L^2}=\big(\|v_1\|_{L^2}^2+\|v_2\|_{L^2}^2\big)^{1/2}$.

If $\bv=(v_1,v_2)^T\in L^\infty(\R^2)^2$, then we write
$\|\bv\|_{L^{\infty}}=\|v_1\|_{L^{\infty}}+\|v_2\|_{L^{\infty}}$.

The standard notation $H^s(\R^n)$, or simply $H^s$ if the underlying
domain is clear from the context, is used for the $L^2$-based
Sobolev spaces; their norm is written $\|\cdot\|_{H^s}$.

Finally, if $u=u(x,t)$ is a  function defined in $\mathbb R^2 \times
[0,T]$, respectively in $\mathbb R^2 \times \mathbb R$ and $1 \le p,
\ q \le \infty$, we define the mixed space-time spaces $L^q_TL^p_x$,
respectively $L^q_tL^p_x$, by the norms
\begin{displaymath}
\|u\|_{L^q_TL^p_x} =\Big( \int_0^T\|u(\cdot,t)\|_{L^p}^qdt\Big)^{\frac1q},
\quad \text{resp.} \quad \|u\|_{L^q_tL^p_x} =\Big( \int_{\mathbb R}\|u(\cdot,t)\|_{L^p}^qdt\Big)^{\frac1q},
\end{displaymath}
and $L^p_xL^q_T$, respectively $L^p_xL^q_t$, by the norms
\begin{displaymath}
\|u\|_{L^p_xL^q_T} =\Big( \int_{\mathbb R^d}\|u(x,\cdot)\|_{L^q_T}^pdx\Big)^{\frac1p},
\quad \text{resp.} \quad \|u\|_{L^p_xL^q_t} =\Big( \int_{\mathbb R^d}\|u(x,\cdot)\|_{L^q}^pdx\Big)^{\frac1p}.
\end{displaymath}
\

\section{Classification of  Boussinesq systems}

The Boussinesq systems can be conveniently classified according to the  linearization at the null solutions which
display their dispersive properties \cite{BCS2}. More precisely, the dispersion matrix writes in Fourier variables,

 \begin{displaymath}
\widehat{A}(\xi_1,\xi_2)=i\begin{pmatrix} 0 &\frac{ \xi_1(1-\epsilon a |\xi|^2)}{1+\epsilon b|\xi|^2)} & \frac{\xi_2(1-\epsilon a |\xi^2|^2)}{1+\epsilon b|\xi|^2)} \\
                 \frac{\xi_1(1-\epsilon c|\xi|^2}{1+\epsilon d|\xi|^2)}  & 0 & 0 \\
                 \\
                 \frac{\xi_2(1-\epsilon c|\xi|^2}{1+\epsilon d|\xi|^2)}    & 0 & 0 \end{pmatrix}.
\end{displaymath}

\vspace{0.3cm}
The corresponding non zero eigenvalues are

$$ \lambda_{\pm}=\pm i |\xi|\left(\frac{(1-\epsilon a|\xi|^2)(1-\epsilon c|\xi|^2)}{(1+\epsilon d|\xi|^2)(1+\epsilon b|\xi|^2)}\right)^{\frac{1}{2}}.$$

\vspace{0.3cm}
Recall \cite{BCS1} that the well-posedness of the linearized Boussinesq system requires that $b\geq 0,\; d\geq 0$ and
$a\leq 0,\;c\leq 0,$ (or $a=c)$.

The order of $\lambda_{\pm}$ determine the strength of the dispersion in the Boussinesq systems, the more dispersive
one corresponding to  $b=d=0,$ the so called {\it KdV-KdV} case which will be studied in details in the next section.

By \lq\lq weakly dispersive" Boussinesq systems, we mean the case where $b>0$ and $d>0$, so that $\lambda_{\pm}$ have order
$1,\;0\; \text{or}\; -1.$

 This situation has been studied in \cite{DMS1} where it is established for instance in the {\it generic} case with
 $a<0$ and $c<0$ that the Cauchy problem  is well-posed in $C(\lbrack 0,T_{\epsilon}\rbrack; H^1(\R^2))^2$
 where $T_{\epsilon}$ is of order $O(\frac{1}{\epsilon^{\beta}})$ for any $\beta<\frac{1}{2}$.

 As already mentioned, existence on time intervals of order $O(\frac1{\epsilon})$ has been established for weakly
 dispersive Boussinesq systems when the initial data are smooth (and with a loss of derivatives) in \cite{MSZ}.

 \section{The KdV-KdV Boussinesq system}
We are interested in the following dispersive system
\begin{equation} \label{KdV type}
\left\{ \begin{array}{l}   \partial_t\eta+\text{div}\,
\textbf{v}+\epsilon\text{div} \, (\eta\textbf{v})+\epsilon\text{div}
\,
\Delta \textbf{v}=0 \\
\partial_t \textbf{v} +\nabla \eta +\epsilon\frac12 \nabla(|\textbf{v}|^2)+\epsilon\nabla \Delta
\eta=0 \end{array} \right., \quad (x_1,x_2) \in \mathbb R^2, \ t \in
\mathbb R,
\end{equation}
which corresponds to the case where $b=d=0$ and $a=c=\frac{1}{6}.$
We have scaled $a$ and $c$ to the value $1$. This system might not
be the best one as modeling and numerical purposes are concerned (in
particular the cubic dispersion terms induce serious numerical
difficulties). Nevertheless \eqref{KdV type} is a mathematically
interesting system since it can be written as a nonlocal
two-dimensional version of a KdV -type system. Previous results
concerned the one-dimensional version:

\begin{equation} \label{1D}
\left\{ \begin{array}{l}   \eta_t+v_x+\epsilon(\eta v)_x+\epsilon v_{xxx}=0 \\
v_t + \eta_x +\frac{\epsilon}{2} (v^2)_x+\epsilon \eta_{xxx}=0
 \end{array} \right., \quad x \in \mathbb R, \ t \in
\mathbb R.
\end{equation}

Actually, as noticed in \cite{BCS2}, the change of variable $\eta=u+w, \; v=u-w$ reduces \eqref{1D} to the following system:

\begin{equation} \label{1Ddiag}
\left\{ \begin{array}{l}    u_t+u_x+\epsilon u_{xxx}
+\epsilon\lbrack 2uu_x+(uw)_x\rbrack=0 \\
w_t -w_x-\epsilon w_{xxx}+\epsilon\lbrack -2ww_x+(uw)_x\rbrack =0
 \end{array} \right., %\quad x \in \mathbb R, \ t \in
%\mathbb R.
\end{equation}
which is  a system  of KdV type with uncoupled (diagonal) linear
part.  Thus (see \cite{BCS2}) the Cauchy problem is easily seen to
be locally well-posed for initial data in $H ^s(\R)\times H ^s(\R),$
$s>\frac{3}{4}$ by the results in \cite{KPV1}, \cite{KPV2}. On the
other hand, as noticed in \cite{ST} Appendix A in a slightly
different context, a minor modification of Bourgain's method as used
in \cite{KPV3} allows to solve the Cauchy problem for \eqref{1Ddiag}
for data in $H^s(\R)\times H^s(\R)$ with $s>-\frac{3}{4} $. We refer
to \cite{BGK} for details. It is worth noticing that in \cite{BGK}
the question of the dependence of the existence time with respect to
$\epsilon$ is not considered.

Coming back to the two-dimensional system, we will establish that
the Cauchy problem is locally well-posed for data in
$H^s(\R^2)\times H^s(\R^2)^2$ where $s>\frac{3}{2}$.
\begin{theorem} \label{theoKdV-KdV}
Let $s>\frac32$ and $0 < \epsilon \le 1$ be fixed. Then for any
$(\eta_0,\textbf{v}_0) \in H^s(\mathbb R^2)\times H^s(\mathbb
R^2)^2$ with $\curl \textbf{v}_0= 0$, there exist a positive time
$T=T(\|(\eta_0,\textbf{v}_0)\|_{H^s\times (H^s)^2})$, a space
$Y^s_{T_{\epsilon}}$ such that
\begin{equation} \label{theoKdV-KdV1}
Y^s_{T_{\epsilon}} \hookrightarrow C\big([0,T_{\epsilon}] \ ; \ H^s(\mathbb R^2)\times H^s(\mathbb R^2)^2\big),
\end{equation}
and a unique solution $(\eta,\textbf{v})$ to \eqref{KdV type} in $Y^s_{T_{\epsilon}}$ satisfying
$(\eta,\textbf{v})_{|_{t=0}}=(\eta_0,\textbf{v}_0)$, where $T_{\epsilon}=T\epsilon^{-\frac12}$.

Moreover, for any $T' \in (0,T_{\epsilon})$, there exists a neighborhood $\Omega^s$ of $(\eta_0,\textbf{v}_0)$
in $H^s(\mathbb R^2)\times H^s(\mathbb R^2)^2$ such that the flow map associated to \eqref{KdV type} is smooth
from $\Omega^s$ into $Y^s_{T'}$.
\end{theorem}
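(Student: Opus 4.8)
The plan is to use the irrotationality constraint to collapse \eqref{KdV type} into a single scalar dispersive equation of KdV type, and then to close a contraction argument in a resolution space built from smoothing, Strichartz and maximal function estimates for the associated linear group. First I would note that $\curl\bv$ is conserved: taking the scalar curl of the velocity equation kills every term, each being a gradient, so $\partial_t(\curl\bv)=0$ and the condition $\curl\bv_0=0$ propagates. I may therefore write $v_j=R_jV$ with $V:=\D^{-1}\divg\bv$ throughout. Applying $\D^{-1}\divg$ to the second equation and using the identities $\divg\bv=\D V$, $\divg\Delta\bv=-\D^3V$ and $\D^{-1}\divg\nabla=-\D$, the system \eqref{KdV type} becomes
\begin{equation*}
\partial_t\eta+LV+\epsilon\,\divg(\eta\bv)=0,\qquad \partial_tV-L\eta-\tfrac{\epsilon}{2}\,\D\abs{\bv}^2=0,
\end{equation*}
where $L:=\D(1+\epsilon\Delta)=\D-\epsilon\D^3$. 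Setting $u=\eta+iV$ diagonalizes the linear part to $\partial_tu=iLu$, so the evolution is governed by the group $S(t)=e^{itL}$ with real symbol $p(\xi)=\abs{\xi}-\epsilon\abs{\xi}^3$, a two-dimensional KdV-type propagator, while the nonlinearity is quadratic and carries exactly one derivative together with a factor $\epsilon$.

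The core of the argument is a family of dispersive estimates for $S(t)$, tracked explicitly in $\epsilon$. I would establish: a Kato-type local smoothing estimate gaining one derivative, of the anticipated form $\Abs{\D\,S(t)u_0}_{L^\infty_xL^2_t}\lesssim \epsilon^{-1/2}\Abs{u_0}_{L^2}$, together with its dual and inhomogeneous counterparts; global Strichartz estimates $\Abs{S(t)u_0}_{L^q_tL^r_x}\lesssim \Abs{u_0}_{\dot H^\sigma}$ for an admissible range of exponents; and a maximal function estimate of the type $\Abs{S(t)u_0}_{L^r_xL^\infty_t}\lesssim\Abs{u_0}_{H^\sigma}$ controlling the supremum in time, which is the ingredient responsible for the regularity threshold $s>\tfrac32$ in two dimensions. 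The principal difficulty, and the genuinely new point, is that the radial phase $p$ is degenerate: its derivative $p'(r)=1-3\epsilon r^2$ vanishes at the critical frequency $r=(3\epsilon)^{-1/2}$, where the Hessian of the space-time phase drops rank, so the usual stationary phase decay fails there. I would handle this by a Littlewood--Paley decomposition, treating separately the low and high frequency regimes and, near the critical annulus $\abs{\xi}\sim\epsilon^{-1/2}$, replacing van der Corput by an Airy-type estimate that accounts for the vanishing of $p'$; the scale $\epsilon^{-1/2}$ of this degeneracy is precisely what fixes the lifespan $\epsilon^{-1/2}$.

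With the linear estimates in hand I would define $Y^s_{T_\epsilon}$ as the intersection of $C([0,T_\epsilon];H^s)$ with the smoothing space $\D^{s+1}u\in L^\infty_xL^2_{T_\epsilon}$, the relevant Strichartz spaces, and the maximal-function space $\D^{s'}u\in L^r_xL^\infty_{T_\epsilon}$, and represent the solution via Duhamel as $u(t)=S(t)u_0+\int_0^tS(t-t')N(u(t'))\,dt'$. The nonlinear estimate is the crux of the fixed point: since $N(u)$ is quadratic with one derivative, I would absorb that derivative using the gain in the dual smoothing estimate, bounding the Duhamel term in $Y^s_{T_\epsilon}$ by $\Abs{N}_{L^1_xL^2_{T_\epsilon}}$ and then closing the quadratic estimate with the maximal function and smoothing norms. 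Bookkeeping the powers of $\epsilon$ carried by the nonlinearity and by the linear estimates against the time factor shows that a ball of $Y^s_{T_\epsilon}$ is stable and the map is a contraction for $T_\epsilon=T\epsilon^{-1/2}$, uniformly in $0<\epsilon\le1$.

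The contraction immediately yields existence and uniqueness in $Y^s_{T_\epsilon}$, and the embedding \eqref{theoKdV-KdV1} follows from the $L^\infty_{T_\epsilon}H^s$ component together with the continuity in time supplied by the Duhamel representation. Smoothness of the flow map on a neighborhood $\Omega^s$ is then automatic: the Duhamel map is polynomial (indeed quadratic) in $u$ and affine in $u_0$, so the implicit function theorem in the contraction framework gives a smooth, in fact analytic, dependence on the data into $Y^s_{T'}$ for every $T'<T_\epsilon$. I expect the main obstacle throughout to be the uniform-in-$\epsilon$ maximal function estimate in the degenerate two-dimensional geometry, since it both dictates the admissible regularity $s>\tfrac32$ and requires the most delicate oscillatory-integral analysis near the critical frequency.
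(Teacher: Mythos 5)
Your overall strategy does mirror the paper's: the reduction via $\curl\bv_0=0$ to a diagonal KdV-type evolution (your scalar $u=\eta+iV$ is, up to conjugation, exactly the paper's pair $(w_1,w_2)$ in system \eqref{KdV type3}), a resolution space built from local smoothing, Strichartz and maximal function norms with explicit $\epsilon$-weights, a Picard contraction on the Duhamel formulation, and analyticity of the flow map from the fixed point scheme. But there is a genuine gap at the single most delicate point. The unconditional smoothing estimate you posit, $\Abs{\D\,S(t)u_0}_{L^\infty_xL^2_t}\lesssim\epsilon^{-1/2}\Abs{u_0}_{L^2}$, is false uniformly in $T$: Kato-type smoothing is a transversality effect whose gain is governed by $\abs{\nabla\varphi_\epsilon(\xi)}=\abs{3\epsilon|\xi|^2-1}$ (cf.\ \eqref{locsm.3}), and this vanishes on the critical sphere $\abs{\xi}=(3\epsilon)^{-1/2}$. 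Data concentrated there have vanishing group velocity, so the corresponding wave packet does not disperse and the time-local $L^2$ norm on a fixed cube grows like $T^{1/2}$; no Airy-type refinement of the oscillatory-integral analysis can restore a derivative gain where $\nabla\varphi_\epsilon=0$, because the obstruction is kinematic, not a failure of van der Corput. This is precisely why the paper's Theorem \ref{locsm} carries the high-frequency projection $P_{>\epsilon^{-\frac12}}$ in \eqref{locsm.1}.

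The missing frequency restriction then propagates into your nonlinear closure, where the plan to ``absorb the derivative using the dual smoothing estimate'' breaks down for the portion of the worst quadratic term at frequencies $\lesssim\epsilon^{-1/2}$. The paper's fix is the splitting in \eqref{NCP11}: the term $w_j\,\partial_{x_l}D_x^{1+\gamma}w_k$ is decomposed with $P_{\le\epsilon^{-\frac12}}+P_{>\epsilon^{-\frac12}}$; the low-frequency piece is estimated crudely by a Bernstein-type bound costing $\epsilon^{-1/2}$, which the prefactor $\epsilon$ of the nonlinearity converts into $\epsilon^{1/2}T\lesssim 1$ exactly on the time scale $T\lesssim\epsilon^{-1/2}$ (see \eqref{NCP12}), while only the high-frequency piece uses the smoothing norm, paired by Cauchy--Schwarz over unit cubes with the $\ell^2$-type maximal norm of Theorem \ref{maximal} (see \eqref{NCP13}, and note that the maximal estimate itself requires $\epsilon T\le 1$ and the $\ell^2$-over-cubes structure, not a plain $L^r_xL^\infty_t$ bound). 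In other words, the lifespan $\epsilon^{-1/2}$ does not come solely from the oscillatory-integral analysis near the critical annulus, as you suggest, but from this low/high frequency bookkeeping in the contraction; without the projection in the smoothing estimate and the compensating Bernstein step, the fixed point argument cannot close. Your remaining ingredients (decay estimate via the Bessel representation and stationary phase, $L^q_TL^\infty_x$ Strichartz bounds with $\epsilon^{-\kappa}$ losses as in Theorem \ref{Strichartz-theo}, and the threshold $s>\frac32$ coming from the two-dimensional maximal estimate) are consistent with the paper's proof.
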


\begin{remark}\label{pareil}
Our proof applies as well with minor modifications to Boussinesq systems with $b=d=0$ and $a<0,$ $c<0$. While this
case is excluded for systems modeling purely gravity waves, it may happen for  capillary-gravity waves when the surface
tension parameter  $\tau$ is large enough.
\end{remark}

\begin{remark}\label{uniform}
It transpires from the proof of Theorem \ref{theoKdV-KdV} that the solution is uniformly bounded (with respect to $\epsilon$)
in the corresponding spaces on a time interval $\lbrack 0, \frac{T}{\sqrt{\epsilon}}\rbrack$.
\end{remark}

The strategy is first to diagonalize the linear part of \eqref{KdV type}. We thus
reduce the original system to a {\it nonlocal} one. In particular,
the nonlinear part involves order zero pseudo-differential operators
(in fact Riesz transforms). We then solve the underlying Duhamel
integral formulation by a fixed point argument in a ball of a Banach
space  constructed from the various dispersive estimates satisfied
by the linear part. Note that here, we are able to compute the dependence on $\epsilon$ of the constants appearing in the
linear estimates. This allows us to obtain an existence result in a time interval $[0,T]$ depending on $\epsilon$, in our
case $T\sim \epsilon^{-\frac12}$.

We will thus proceed  as follows. We first derive the Hamiltonian
formulation of \eqref{KdV type}. We then perform the diagonalization
and  state the dispersive estimates which are to be used in the
fixed point argument. The maximal function and Strichartz estimates
seem to be new. Finally, we solve the new system \eqref{KdV type3}
which is equivalent to \eqref{KdV type}, assuming $\curl {\bf v}_0=
0$.

\subsection{Hamiltonian structure}

The system \eqref{KdV type} can be rewritten on the form
\begin{equation} \label{KdV type2}
\partial_t \textbf{u}+ A_{\epsilon}\textbf{u}+\epsilon\mathcal{N}(\textbf{u})=0,
\end{equation}
where
\begin{displaymath} \label{mat}
\textbf{u}=\begin{pmatrix} \eta \\ v_1 \\ v_2 \end{pmatrix}, \quad
A_{\epsilon}=\begin{pmatrix} 0 & (1+\epsilon\Delta)\partial_{x_1} & (1+\epsilon\Delta)\partial_{x_2} \\
                 (1+\epsilon\Delta)\partial_{x_1} & 0 & 0 \\
                 (1+\epsilon\Delta)\partial_{x_2} & 0 & 0 \end{pmatrix},
\end{displaymath}
and
\begin{displaymath}
\mathcal{N}(\textbf{u})=\begin{pmatrix}
\partial_{x_1}(\eta
v_1)+\partial_{x_2}(\eta v_2) \\ \frac12 \partial_{x_1}(v_1^2+v_2^2) \\
\frac12 \partial_{x_2}(v_1^2+v_2^2) \end{pmatrix}.
\end{displaymath}
We will denote by $(\cdot,\cdot)$ the scalar product on $L^2(\mathbb
R^2;\mathbb R^3)$, \textit{i.e}
\begin{displaymath}
(\textbf{u},\widetilde{\textbf{u}})=\int_{\mathbb R^2}\left(\eta
\widetilde{\eta}+v_1 \widetilde{v}_1+v_2 \widetilde{v}_2
\right)dx_1dx_2
\end{displaymath}
and by $J$ the skew adjoint matrix
\begin{displaymath}
J=\begin{pmatrix} 0 & \partial_{x_1} & \partial_{x_2} \\
                 \partial_{x_1} & 0 & 0 \\
                 \partial_{x_2} & 0 & 0 \end{pmatrix}.
\end{displaymath}
Then, the system \eqref{KdV type} is equivalent to
\begin{displaymath}
\partial_t \textbf{u}=-J \begin{pmatrix} (1+\epsilon\Delta)\eta+\frac{\epsilon}2 |\textbf{v}|^2\\
(1+\epsilon\Delta)v_1+\epsilon\eta v_1 \\
(1+\epsilon\Delta)v_2+\epsilon\eta v_2
\end{pmatrix}=J (\text{grad} \, H_{\epsilon})(\textbf{u}),
\end{displaymath}
where $H_{\epsilon}(\textbf{u})$ is the functional given by
\begin{displaymath}
H_{\epsilon}(\textbf{u})=\frac12 \int_{\mathbb R^2}\big(\epsilon|\nabla
\eta|^2+\epsilon|\nabla \textbf{v}|^2-\eta^2-|\textbf{v}|^2-\epsilon
\eta|\textbf{v}|^2\big)dx_1dx_2.
\end{displaymath}
Therefore, it follows that $H_{\epsilon}$ is a conserved quantity by the flow
of \eqref{KdV type}, since
\begin{displaymath}
\frac{d}{dt}H_{\epsilon}(\textbf{u})=H_{\epsilon}'(\textbf{u})\,\partial_t\textbf{u}
=\big((\text{grad}\, H_{\epsilon})(\textbf{u}),\partial_t\textbf{u}\big)=
\big((\text{grad} \, H_{\epsilon})(\textbf{u}),J(\text{grad} \,
H_{\epsilon})(\textbf{u})\big)=0,
\end{displaymath}
where we used the fact that $J$ is skew adjoint.

\subsection{Linear estimates}
\subsubsection{Diagonalization} \label{diagonalization}
We transform here \eqref{KdV type} into an equivalent system with a
{\it diagonal} linear part.

First, we observe that the Fourier transform of $A$ is given by
\begin{displaymath}
\widehat{A}_{\epsilon}(\xi_1,\xi_2)=\begin{pmatrix} 0 & i(1-\epsilon|\xi|^2)\xi_1 & i(1-\epsilon|\xi|^2)\xi_2 \\
                 i(1-\epsilon|\xi|^2)\xi_1  & 0 & 0 \\
                 i(1-\epsilon|\xi|^2)\xi_2  & 0 & 0 \end{pmatrix}.
\end{displaymath}
We compute the characteristic polynomial of
$\widehat{A}_{\epsilon}(\xi_1,\xi_2)$,
\begin{displaymath}
\chi_{\widehat{A}_{\epsilon}(\xi_1,\xi_2)}(\lambda)=-\lambda(\lambda^2+(1-\epsilon|\xi|^2)^2|\xi|^2),
\end{displaymath}
so that its eigenvalues are given by
\begin{displaymath}
\lambda_0=0, \quad \lambda_1=i(1-\epsilon|\xi|^2)|\xi|, \quad
\text{and} \quad \lambda_2=-i(1-\epsilon|\xi|^2)|\xi|,
\end{displaymath}
with associated eigenvectors
\begin{displaymath}
E_0=\begin{pmatrix} 0 \\ \frac{\xi_2}{|\xi|} \\ -\frac{\xi_1}{|\xi|}
\end{pmatrix}, \quad E_1=\begin{pmatrix} 1 \\ \frac{\xi_1}{|\xi|} \\ \frac{\xi_2}{|\xi|}
\end{pmatrix}, \quad \text{and} \quad E_2=\begin{pmatrix} -1 \\
\frac{\xi_1}{|\xi|} \\ \frac{\xi_2}{|\xi|}
\end{pmatrix}.
\end{displaymath}
Then, if we denote by $\widehat{P}$ the matrix of $(E_0,E_1,E_2)$ in
the canonical basis, we have that
\begin{displaymath}
\widehat{P}=\begin{pmatrix} 0 & 1 & -1 \\ \frac{\xi_2}{|\xi|} &
\frac{\xi_1}{|\xi|} & \frac{\xi_1}{|\xi|}
\\ -\frac{\xi_1}{|\xi|} &\frac{\xi_2}{|\xi|} & \frac{\xi_2}{|\xi|}
\end{pmatrix}, \quad \text{and} \quad
\widehat{P}^{-1}=\frac{1}{2}\begin{pmatrix} 0 & 2\frac{\xi_2}{|\xi|} & -2\frac{\xi_1}{|\xi|} \\
1 & \frac{\xi_1}{|\xi|} & \frac{\xi_2}{|\xi|} \\ -1
&\frac{\xi_1}{|\xi|} & \frac{\xi_2}{|\xi|}
\end{pmatrix}.
\end{displaymath}

Therefore, the linear part of \eqref{KdV type2} is equivalent to
$\partial_t\textbf{w}+D\textbf{w}=0$, where
\begin{displaymath}
\textbf{w}=\begin{pmatrix} w_0 \\ w_1 \\ w_2
\end{pmatrix}=P^{-1}\textbf{u}, \quad
P^{-1}=\frac{1}{2}\begin{pmatrix} 0 & 2iR_2 & -2iR_1 \\
1 & iR_1 & iR_2 \\ -1 &iR_1 & iR_2
\end{pmatrix},
\end{displaymath}
and
\begin{displaymath}
D=\begin{pmatrix} 0 & 0 & 0 \\ 0 & i(1+\epsilon\Delta)\sqrt{-\Delta} & 0 \\
0 & 0 & -i(1+\epsilon\Delta)\sqrt{-\Delta} \end{pmatrix}.
\end{displaymath}

Next, we turn to the nonlinear part of \eqref{KdV type2}.
$\mathcal{N}(\textbf{u})$ is given as function of $\textbf{w}$ by
\begin{displaymath}
\begin{pmatrix}
\partial{x_1}\left((w_1-w_2)(R_2w_0+R_1(w_1+w_2)) \right)+
\partial{x_2}\left((w_1-w_2)(-R_1w_0+R_2(w_1+w_2)) \right) \\
\frac12 \partial_{x_1}\left(
(R_2w_0+R_1w_1+R_1w_2)^2+(-R_1w_0+R_2w_1+R_2w_2)^2\right) \\
\frac12 \partial_{x_2}\left(
(R_2w_0+R_1w_1+R_1w_2)^2+(-R_1w_0+R_2w_1+R_2w_2)^2\right)
\end{pmatrix}.
\end{displaymath}
Then, deduce, using the identities
$R_2\partial_{x_1}=R_1\partial_{x_2}$ and
$R_1\partial_{x_1}+R_2\partial_{x_2}=\sqrt{-\Delta}$, that
\begin{displaymath}
P^{-1}\mathcal{N}(\textbf{u})=\frac12\begin{pmatrix} 0
\\ I +II\\ -I+II
\end{pmatrix}=: \widetilde{\mathcal{N}}(\textbf{w}),
\end{displaymath}
where
\begin{equation} \label{I}
\begin{split}
I=I(w_1,w_2)\:=&(w_1-w_2)\sqrt{-\Delta}(w_1+w_2)+\partial_{x_1}(w_1-w_2)R_1(w_1+w_2) \\&
+\partial_{x_2}(w_1-w_2)R_2(w_1+w_2),
\end{split}
\end{equation}
and
\begin{equation} \label{II}
II=II(w_1,w_2):=i\sqrt{-\Delta}\left(
(R_1(w_1+w_2))^2+(R_2(w_1+w_2))^2\right).
\end{equation}

Summarizing we have that \eqref{KdV type} is equivalent to
\begin{equation} \label{KdV type3}
\left\{\begin{array}{ll}
\partial_tw_1+ i(1+\epsilon\Delta)\sqrt{-\Delta}w_1+(I+II)(w_1,w_2)=0 \\
\partial_tw_2- i(1+\epsilon\Delta)\sqrt{-\Delta}w_2+(-I+II)(w_1,w_2)=0
\end{array}\right. ,
\end{equation}
where $I(w_1,w_2)$ and $II(w_1,w_2)$ are defined in \eqref{I} and
\eqref{II}.

\begin{remark}
Note that we use in our analysis that $w_0=0$. Indeed, the equation
on $w_0$ is $\partial_tw_0=0$. Moreover,
\begin{displaymath}
w_0=0 \quad \iff \quad R_2v_1=R_1v_2 \quad \iff \text{curl} \,
\textbf{v}=0.
\end{displaymath}
We observe that this condition is physically relevant. The Boussinesq systems
derived from the water waves equations, where the fluid is supposed
to be irrotational and ${\bf v}$ is an $O(\epsilon^2)$ approximation of the horizontal velocity at a certain depth
which is a gradient. Note also that since the equation for ${\bf v}$ writes $\partial_t {\bf v}= \nabla F$, the condition
$curl {\bf v} =0$ is preserved by the evolution.
\end{remark}

Now, to derive the smoothing effects associated to the linear part
of \eqref{KdV type}, it suffices to consider the linear
system
\begin{equation}\label{suff}
\left\lbrace
  \begin{array}{l}
u_t\pm L_{\epsilon}u=0,\\
u(.,0)= u_0,
\end{array}\right.
\end{equation}
where
$L_{\epsilon}=i(I+\epsilon\Delta)\sqrt{-\Delta}=-i\varphi_{\epsilon}(D)$,
$\textit{i.e.}$ $\varphi_{\epsilon}(\xi)=\epsilon|\xi|^3-|\xi|$. In
the sequel, we will identify
$\varphi_{\epsilon}(\xi)=\varphi_{\epsilon}(|\xi|)$. Moreover, we
will denote by $U_{\epsilon}^{\pm}(t)u_0$ the solution of
\eqref{suff}, \textit{i.e.}
\begin{equation} \label{group}
U_{\epsilon}^{\pm}(t)u_0=\Big(e^{\pm i
t\varphi_{\epsilon}}\widehat{u_0} \Big)^{\vee}.
\end{equation}
The smoothing effects are of three different types.

\subsubsection{Dispersive smoothing estimates}

We will use the results in \cite{KPV} (see also the general results
in \cite{KS}) to deduce the following {\it local smoothing
estimates}.

Let $\big\{Q_{\alpha}\big\}_{\alpha \in \mathbb Z^2}$ denote a family
of nonoverlapping cubes of unit size such that $\mathbb
R^2=\underset{\alpha \in \mathbb Z^2}{\cup}Q_{\alpha}$.
\begin{theorem}\label{locsm}
Let $T>0$ and $\epsilon>0$. Then, it holds that
\begin{equation} \label{locsm.1}
\sup_{\alpha}\Big(\int_{Q_{\alpha}}\int_0^T
\big|P_{>\epsilon^{-\frac12}}D^1_xU_{\epsilon}^{\pm}(t)u_0(x)\big|^2dtdx\Big)^{\frac12}
\lesssim \epsilon^{-\frac12}\|u_0\|_{L^2_x},
\end{equation}
where the implicit constant does not depend on $\epsilon$ and $T$.
\end{theorem}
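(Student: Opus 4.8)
The plan is to reduce the desired local smoothing estimate to a one-dimensional Kato-type smoothing inequality for the group $U_\epsilon^\pm(t)$ and then to sum over the family of cubes. The key observation is that the symbol $\varphi_\epsilon(\xi)=\epsilon|\xi|^3-|\xi|$ is radial, so I expect to reduce matters to a dispersion relation along a single frequency direction. First I would recall from \cite{KPV}, \cite{KS} that for a one-dimensional unitary group with real phase $\varphi_\epsilon$, the sharp local smoothing gain is governed by the size of $\varphi_\epsilon'(|\xi|)=3\epsilon|\xi|^2-1$; precisely, for frequencies in the region where we apply the projection $P_{>\epsilon^{-1/2}}$, i.e. $|\xi|\gtrsim\epsilon^{-1/2}$, one has $\varphi_\epsilon'(|\xi|)\gtrsim \epsilon|\xi|^2$. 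The factor $D^1_x$ on the left must therefore be compensated by the square root of this derivative in the standard Kato smoothing estimate
\begin{displaymath}
\sup_\alpha \int_{Q_\alpha}\int_0^T \big|g(D)U_\epsilon^\pm(t)u_0\big|^2\,dt\,dx \lesssim \Big\|\frac{g(\xi)^2}{|\varphi_\epsilon'(|\xi|)|}\Big\|_{L^\infty_{\mathrm{supp}}}\|u_0\|_{L^2_x}^2,
\end{displaymath}
applied with $g(\xi)=|\xi|$ restricted to $|\xi|>\epsilon^{-1/2}$.

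Carrying this out, on the support of $P_{>\epsilon^{-1/2}}$ the weight becomes
\begin{displaymath}
\frac{g(\xi)^2}{|\varphi_\epsilon'(|\xi|)|}=\frac{|\xi|^2}{|3\epsilon|\xi|^2-1|}\lesssim\frac{|\xi|^2}{\epsilon|\xi|^2}=\frac{1}{\epsilon},
\end{displaymath}
which is exactly the $\epsilon^{-1}$ that produces the claimed $\epsilon^{-1/2}$ after taking square roots. So at the level of orders of magnitude the estimate is transparent: the cubic dispersion gives a gain of two derivatives in the group velocity, one of which cancels the $D^1_x$ and the other of which yields the explicit $\epsilon^{-1/2}$ constant. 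The main technical steps are (i) to write the smoothing estimate in a genuinely two-dimensional, cube-uniform form, and (ii) to handle the vanishing of $\varphi_\epsilon'$ near $|\xi|=\epsilon^{-1/2}$, which is precisely why the high-frequency projection $P_{>\epsilon^{-1/2}}$ is inserted --- it excises the neighborhood of the degenerate frequency $|\xi|=(3\epsilon)^{-1/2}$ where $\varphi_\epsilon'$ vanishes and the smoothing degenerates.

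The hard part will be the passage from a one-dimensional smoothing statement to the two-dimensional cube-uniform bound \eqref{locsm.1}. Because $\varphi_\epsilon$ is radial, its Hessian degenerates in the angular direction, so one does not gain smoothing uniformly in all directions; the standard remedy is to foliate frequency space by lines in a fixed direction, apply the one-dimensional Kato estimate on each fiber with the derivative of the phase in that direction, and integrate, exploiting that the projection onto $|\xi|>\epsilon^{-1/2}$ keeps $|\varphi_\epsilon'(|\xi|)|$ bounded below by $c\,\epsilon|\xi|^2$. I would invoke the results of \cite{KPV} and \cite{KS} in the form that already packages this two-dimensional reduction, so that the only genuinely new input is tracking the explicit $\epsilon$-dependence through the lower bound on $|\varphi_\epsilon'|$ and through the normalization of the cubes $Q_\alpha$. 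Care must be taken that the implicit constant depends neither on $\epsilon$ nor on $T$; the $T$-independence is immediate since the time integral only enters through $\int_0^T\le\int_{\mathbb R}$, and the $\epsilon$-independence follows once the weight is bounded as above uniformly on the support of the projection.
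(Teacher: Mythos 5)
Your proposal is correct and takes essentially the same route as the paper: the paper reduces to a single unit cube and applies the weighted local smoothing estimate of Kenig--Ponce--Vega (Theorem 4.1, formula (4.2) of \cite{KPV}), bounding the weight $|\xi|^2/|\nabla\varphi_{\epsilon}(\xi)|=|\xi|^2/(3\epsilon|\xi|^2-1)\lesssim \epsilon^{-1}$ on the support of $P_{>\epsilon^{-\frac12}}$, which is exactly your computation. Your concern about the fiber-wise one-dimensional reduction and the angular degeneracy of the radial phase is, as you anticipated, already packaged into the cited multidimensional KPV theorem, so the paper simply invokes it as a black box.
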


\begin{proof} Without loss of generality, we can assume that $Q_{\alpha}=Q:=\{x \ : \
|x|<1\}$. We fix a function $\eta$ in $C_0^{\infty}({\mathbb R})$
such that $0\le \eta \le 1$, $\eta \equiv 1$ on $[-1,1]$ and
$\text{supp}\, \eta \subset [0,2]$. Let us define
$\eta_{\epsilon}(x)=\epsilon^{\frac12}\eta(\epsilon^{\frac12}x)$.
Then $P_{\le \epsilon^{-\frac12}}$, respectively
$P_{>\epsilon^{-\frac12}}$ denote the operators defined by
\begin{equation} \label{P}
P_{\le
\epsilon^{-\frac12}}u_0=\mathcal{F}^{-1}\big(\eta_{\epsilon}(|\cdot|)\widehat{u}_0\big)
\quad \text{and} \quad
P_{>\epsilon^{-\frac12}}=1-P_{\le\epsilon^{-\frac12}}.
\end{equation}
Observe that in the support of
$1-\eta_{\epsilon}(|\cdot|)$, we have that $|\nabla
\varphi_{\epsilon}(\xi)|=3\epsilon|\xi|^2-1>0$.
Then it follows by using Theorem 4.1, formula (4.2), p 54-55 in
\cite{KPV} that
\begin{equation} \label{locsm.3}
\begin{split}
\big\|D^1_xU_{\epsilon}^{\pm}(t)P_{>
\epsilon^{-\frac12}}u_0\big\|_{L^2_{Q\times[0,T]}} &\lesssim
\Big(\int_{|\xi|>\epsilon^{-\frac12}}
\frac{|\xi|^2}{3\epsilon|\xi|^2-1}|\widehat{u}_0(\xi)|^2d\xi\Big)^{\frac12} \\ &\lesssim
\epsilon^{-\frac12}\|u_0\|_{L^2_x},
\end{split}
\end{equation}
which concludes the proof of Theorem \ref{locsm}.
\end{proof}

\subsubsection{The maximal function estimate}

We will prove our maximal function estimate in the $n$-dimensional
case. In other words, we will consider the unitary group
$U_{\epsilon}^{\pm}(t)=e^{\pm i t\varphi_{\epsilon}(D)}$, where
$\varphi_{\epsilon}(\xi)=\varphi_{\epsilon}(|\xi|)=\epsilon|\xi|^3-|\xi|$
and $\xi \in \mathbb R^n$, for $n \ge 2$. Let
$\{Q_{\alpha}\}_{\alpha \in \mathbb Z^n}$ denote the mesh of dyadic
cubes of unit size. The main result of this subsection reads as
follows.
\begin{theorem} \label{maximal}
With the above notation, for any $s>\frac{3n}4$, $\epsilon>0$ and $T>0$ satisfying
$\epsilon T \le 1$, it holds that
\begin{equation} \label{maximal.1}
\Big( \sum_{\alpha \in \mathbb Z^n} \sup_{|t| \le T}\sup_{x \in
Q_{\alpha}}\big|U_{\epsilon}^{\pm}(t)u_0(x) \big|^2\Big)^{\frac12} \lesssim
(1+T^{\frac{n}4-\frac14})\|u_0\|_{H^s},
\end{equation}
where the implicit constant does not depend on $\epsilon$ and $T$.
\end{theorem}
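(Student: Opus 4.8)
The plan is to run a Littlewood--Paley decomposition in the spatial frequency and reduce the whole estimate to a single dyadic block. Writing $u_0=P_{\le 1}u_0+\sum_{N\ge 1}P_Nu_0$ with $N$ dyadic and $\widehat{P_Nu_0}$ supported in $\{|\xi|\sim N\}$, the low-frequency part is harmless (on $\{|\xi|\lesssim 1\}$ the phase $\varphi_\epsilon$ is smooth and bounded, and a crude Sobolev embedding on each space-time cube suffices). For the high-frequency blocks I would aim at the single-block bound
\[
\Big(\sum_{\alpha}\sup_{|t|\le T}\sup_{x\in Q_\alpha}\big|U_\epsilon^{\pm}(t)P_Nu_0(x)\big|^2\Big)^{\frac12}\lesssim N^{\frac{3n}{4}}\big(1+T^{\frac{n-1}{4}}\big)\|P_Nu_0\|_{L^2_x},
\]
with constant independent of $\epsilon$, $N$ and $T$. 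Granting this, I would sum over $N$: the left-hand side of \eqref{maximal.1} is a norm (it is $\ell^2_\alpha L^\infty$ in the cubes and the time variable), so by the triangle inequality and then Cauchy--Schwarz one gets $\sum_N N^{3n/4}\|P_Nu_0\|_{L^2}\le(\sum_N N^{3n/2-2s})^{1/2}\|u_0\|_{H^s}$, which converges precisely when $s>\tfrac{3n}{4}$, and the $N$-independent factor $(1+T^{(n-1)/4})=(1+T^{n/4-1/4})$ passes through, giving \eqref{maximal.1}.

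The core is therefore the fixed-frequency maximal estimate, and here the mechanism must be genuinely dispersive: a brute-force anisotropic Sobolev embedding in space-time (distributing derivatives between $\nabla_x\sim N$ and $\partial_t\sim\varphi_\epsilon(N)$) is far too lossy and produces powers like $N^{(n+3)/2}$ rather than $N^{3n/4}$. Instead I would start from the pointwise bound on the kernel $K_{N,t}(x)=\int e^{i(x\cdot\xi+t\varphi_\epsilon(\xi))}\psi_N(\xi)\,d\xi$. Since $\varphi_\epsilon$ is radial, the Hessian of the phase has the radial eigenvalue $\varphi_\epsilon''(r)=6\epsilon r$ and the $(n-1)$-fold angular eigenvalue $\varphi_\epsilon'(r)/r=3\epsilon r-r^{-1}$; in the regime $N\gg\epsilon^{-1/2}$ all eigenvalues are $\sim\epsilon N$, so $|\det\mathrm{Hess}\,\varphi_\epsilon|\sim(\epsilon N)^n$ and $n$-dimensional stationary phase yields the decay $\|U_\epsilon^\pm(t)P_Nu_0\|_{L^\infty}\lesssim(|t|\epsilon N)^{-n/2}\|P_Nu_0\|_{L^1}$ (and the trivial bound $N^{n}\|\cdot\|_{L^1}$ when $|t|\epsilon N\lesssim1$). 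From this decay together with the $L^2$-conservation $\|U_\epsilon^\pm(t)\|_{L^2\to L^2}=1$, I would derive the block maximal estimate by the standard route: time-slice $[-T,T]$ into sub-intervals, bound the local maximal function over each space-time cube by a $TT^*$/fractional-integration argument fed with the kernel decay (the same interpolation that underlies the one-dimensional KdV result at $s>3/4$, i.e.\ $3n/4$ with $n=1$), and recombine the sub-intervals in $\ell^2$; optimizing the slice length is what converts the $|t|^{-n/2}$ decay into the exponent $N^{3n/4}$ and the growth $T^{(n-1)/4}$.

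The main obstacle is making the stationary-phase step uniform in $\epsilon$, because the Hessian of $\varphi_\epsilon$ is genuinely degenerate across the sphere $|\xi|=(3\epsilon)^{-1/2}$: there the angular eigenvalues $3\epsilon r-r^{-1}$ vanish, while for $|\xi|\lesssim\epsilon^{-1/2}$ one has $\varphi_\epsilon(\xi)\approx-|\xi|$ and it is instead the radial direction that is flat (the half-wave, cone-type behaviour). Thus the clean $|t|^{-n/2}$ decay holds only well above the transition frequency, and near and below $|\xi|\sim\epsilon^{-1/2}$ one loses powers of $|t|$. I would treat these regimes by van der Corput estimates using the next non-vanishing derivative, so that the degenerate directions still contribute a fractional gain, and, to keep all constants $\epsilon$-free, perform the rescaling $\xi=\epsilon^{-1/2}\zeta$, which turns $\varphi_\epsilon$ into $\epsilon^{-1/2}(|\zeta|^3-|\zeta|)$ and fixes the degenerate sphere at $|\zeta|=3^{-1/2}$; the hypothesis $\epsilon T\le1$ is exactly what bounds the rescaled time window and guarantees that the contributions of the degenerate frequencies are absorbed into the stated $\epsilon$-independent constant. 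Carrying out this van der Corput analysis carefully across the transition, and checking that the resulting decay still feeds the $\ell^2$-in-cubes recombination with the claimed exponents, is the delicate part of the argument.
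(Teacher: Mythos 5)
Your overall architecture coincides with the paper's: a Littlewood--Paley decomposition, a single dyadic-block bound of size $N^{\frac{3n}4}\big(1+T^{\frac{n-1}4}\big)\|P_Nu_0\|_{L^2}$, and an $\ell^2$ summation that requires exactly $s>\frac{3n}4$ -- all of that is correct. The genuine gap is in how you propose to obtain the block bound. The fixed-time dispersive estimate $\|U_{\epsilon}^{\pm}(t)P_Nu_0\|_{L^\infty}\lesssim(|t|\epsilon N)^{-n/2}\|P_Nu_0\|_{L^1}$ carries no spatial localization, whereas the left-hand side of \eqref{maximal.1} is an $\ell^2_\alpha$-norm over \emph{all} unit cubes: dualizing against $\ell^2_\alpha L^1_{x,t}$ one must control the interactions between distant cubes $Q_\alpha,Q_\beta$, and this requires a pointwise, \emph{decreasing and spatially integrable} majorant of the kernel, $\big|\int e^{i(t\varphi_\epsilon(\xi)+x\cdot\xi)}\psi_k(|\xi|)d\xi\big|\le H_k(|x|)$ with $\int H_k\lesssim 2^{\frac{3kn}2}\big(1+|t|^{\frac n2-\frac12}\big)$, so that a Schur-type summation over pairs of cubes closes. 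With only $L^\infty_x$ control of the kernel, the dual bilinear form $\sum_{\alpha,\beta}$ can be bounded solely through $\big(\sum_\alpha\|g_\alpha\|_{L^1}\big)^2$, i.e.\ an $\ell^1_\alpha$ duality, which diverges when infinitely many cubes are active; time-slicing and $\ell^2$ recombination of sub-intervals cannot repair this, since the spatial problem recurs identically on every slice. The needed majorant is precisely the content of the paper's Lemma \ref{lemma maximal}, and the growth $T^{\frac n4-\frac14}$ in Theorem \ref{maximal} comes directly from the $L^1$-mass of $H_k$ on the cone shell $1\le\big||x|-t\big|\le c2^{2k}$, not from optimizing a slice length; once the majorant is in hand the theorem follows by the argument of Theorem 3.2 of \cite{KPV5}. (For the same reason your low-frequency disposal is too crude when $n=2$: a per-cube Sobolev embedding in space-time costs a factor $T^{\frac12}$, exceeding the claimed $1+T^{\frac14}$; the paper covers $P_{\le1}$ by the same lemma with $2^{\frac{3kn}2}$ replaced by a constant.)

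The second missing piece is the one you yourself flag as delicate: a uniform-in-$\epsilon$ stationary-phase analysis across the sphere $|\xi|=(3\epsilon)^{-1/2}$ where the angular eigenvalues $3\epsilon r-r^{-1}$ vanish, together with the low-frequency cone regime and the matching region. The paper never performs $n$-dimensional stationary phase: since $\varphi_\epsilon$ is radial, the angular integration is carried out \emph{exactly} through the Bessel representation \eqref{lemma maximal.3}, and the asymptotics \eqref{lemma maximal2.2}--\eqref{lemma maximal2.4} produce factors $e^{\pm irs}$ which combine with the linear part $-ts$ of the phase, reducing everything to the one-dimensional cubic integrals $I_k(t,r')=\int_0^{+\infty}e^{i(\epsilon ts^3+sr')}\psi_k(s)\,ds$ with $r'=\pm r-t$; these are bounded by Proposition \ref{prop maximal} (Proposition 2.6 of \cite{KPV1}, Van der Corput based on the third derivative $6\epsilon t$), uniformly for $\epsilon t\in(0,2]$ -- this, and not a rescaled time window, is where the hypothesis $\epsilon T\le 1$ enters. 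The Bessel route thus handles your transition regimes automatically and, crucially, yields the correct dependence on the distance $\big||x|-t\big|$ to the light cone, which your radial rescaling $\xi=\epsilon^{-1/2}\zeta$ does not by itself detect. As written, your proposal defers exactly these two points -- the spatially decaying majorant and its uniform derivation across the degeneracies -- and without them the block estimate, hence the theorem, is not established.
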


We will only treat the case of $U_+^{\epsilon}$, since the case of $U_-^{\epsilon}$ is similar. The proof of Theorem \ref{maximal} is based on the next lemma.

\begin{lemma} \label{lemma maximal}
For $k \in \mathbb Z_+$, let $\psi_k \in C_0^{\infty}([2^{k-1},2^{k+1}])$ be such that $0 \le
\psi_k \le 1$. Then for  $\epsilon t \in (0,2]$,
\begin{equation} \label{lemma maximal.1}
\Big|\int_{\mathbb R^n}e^{i(t\varphi_{\epsilon}(\xi)+x.\xi)} \psi_k(|\xi|)d\xi
\Big| \le cH_k(|x|),
\end{equation}
where $H_k$ is decreasing and satisfies
\begin{equation} \label{lemma maximal.2}
\int_{\mathbb R^n}H_k(|x|)dx \lesssim (1+|t|^{\frac{n}2-\frac12})2^{\frac{3kn}2},
\end{equation}
and $H_k(r) \le c2^{\frac{3kn}2}$ for $r \in (0,10)$. Also, a
similar result holds for $\psi \in C_0^{\infty}([-10,10])$ with
$2^{\frac{3kn}2}$ replaced by $c$. Observe that the implicit constant does not depend on $\epsilon$ and $t$, but may depend on the dimension $n$.
\end{lemma}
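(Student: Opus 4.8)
The plan is to estimate the oscillatory integral
\[
I_k(x):=\int_{\mathbb R^n}e^{i(t\varphi_\epsilon(\xi)+x\cdot\xi)}\psi_k(|\xi|)\,d\xi,\qquad \varphi_\epsilon(\xi)=\epsilon|\xi|^3-|\xi|,
\]
by first reducing it to a one-dimensional integral and then applying stationary/non-stationary phase estimates. Since both the amplitude $\psi_k(|\xi|)$ and the phase are radial, I would pass to polar coordinates $\xi=\rho\theta$ ($\rho=|\xi|$, $\theta\in S^{n-1}$) and carry out the angular integration exactly, producing a Bessel factor $\int_{S^{n-1}}e^{i\rho|x|\,\theta\cdot\omega}\,d\theta=c_n(\rho|x|)^{-(n-2)/2}J_{(n-2)/2}(\rho|x|)$, with $\omega=x/|x|$. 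Splitting $J_{(n-2)/2}$ into its small- and large-argument behaviour then reduces the problem, up to easily controlled terms, to the genuinely one-dimensional integrals
\[
|x|^{-(n-1)/2}\int_0^\infty e^{i(t\varphi_\epsilon(\rho)\pm\rho|x|)}\,\rho^{(n-1)/2}\psi_k(\rho)\,d\rho,
\]
whose amplitude has size $2^{k(n-1)/2}$ on $\rho\sim2^k$. This representation is what makes the decreasing factor $|x|^{-(n-1)/2}$ visible and reduces everything to the radial phase $\phi_\pm(\rho)=t\varphi_\epsilon(\rho)\pm\rho|x|$.

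For small $|x|$, in particular for the pointwise claim $H_k(r)\le c\,2^{3kn/2}$ on $(0,10)$, I would use the trivial bound $|I_k(x)|\le\int\psi_k(|\xi|)\,d\xi\lesssim 2^{kn}\le 2^{3kn/2}$, which needs no cancellation. The substance is the decay for large $|x|$, which I would extract from the derivatives $\phi_\pm'(\rho)=t(3\epsilon\rho^2-1)\pm|x|$, $\phi_\pm''(\rho)=6t\epsilon\rho$ and $\phi_\pm'''(\rho)=6t\epsilon$. When $\phi_\pm'$ has no zero on the support there is no stationary point and repeated integration by parts gives rapid decay in $|x|$ measured against the distance to the stationary set; when a stationary point is present, van der Corput's second-derivative estimate yields the gain $(t\epsilon\rho)^{-1/2}\sim(t\epsilon2^k)^{-1/2}$, always to be compared with the trivial length bound $2^k$. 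Which one is effective is governed by the position of $2^k$ relative to the critical scale $\epsilon^{-1/2}$ at which the group velocity $3\epsilon\rho^2-1$ vanishes, so the argument splits naturally into the low-frequency ($2^k\ll\epsilon^{-1/2}$), critical, and high-frequency ($2^k\gg\epsilon^{-1/2}$) regimes.

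Collecting these bounds as a function of $r=|x|$ gives a profile that decreases in $r$, both through the factor $r^{-(n-1)/2}$ and through the cut-off imposed by the location of the stationary set, and I would then dominate it by a genuinely decreasing $H_k$. The integral estimate I would verify by decomposing $\mathbb R^n$ into the ball carrying the trivial bound and the dyadic shells carrying the stationary contributions, integrating $r^{(n-1)/2}$ against the van der Corput gain up to the outer radius $R_{\max}\sim t\max_{\rho\sim2^k}|3\epsilon\rho^2-1|$, which also controls the dispersive spreading. The constraint $\epsilon t\le2$ is precisely what makes the outcome uniform: it converts every negative power of $\epsilon$ produced by the second- and third-derivative bounds into positive powers of $t$ and $2^k$, leaving the advertised factor $(1+|t|^{n/2-1/2})2^{3kn/2}$ with an $\epsilon$-independent constant.

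I expect the main obstacle to be twofold. First, at the critical frequency $\rho\sim\epsilon^{-1/2}$ the radial phase degenerates (the group velocity vanishes, a caustic forms), so neither non-stationary phase nor the second-derivative van der Corput bound is sharp and one must fall back on the third-derivative estimate $(t\epsilon)^{-1/3}$, again compared with the trivial bound, handling this frequency band separately. Second, and more delicate, is the bookkeeping: one must organise the regime-dependent pointwise estimates so that they assemble into a single \emph{decreasing} radial majorant $H_k$ whose $L^1$ norm reproduces the sharp $t$-power, uniformly in $\epsilon$, which is where the interplay between the shell geometry at low frequency and the constraint $\epsilon t\le2$ has to be used carefully.
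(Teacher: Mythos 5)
Your skeleton is the same as the paper's: reduce via the radial representation \eqref{lemma maximal.3} and the Bessel asymptotics of Lemma \ref{lemma maximal2} to one-dimensional cubic-phase integrals with amplitude supported in $\rho\sim 2^k$ and prefactor $r^{-(n-1)/2}$, estimate those by non-stationary integration by parts and van der Corput, and integrate a radial majorant (the paper simply quotes Proposition \ref{prop maximal}, i.e.\ Proposition 2.6 of \cite{KPV1}, for the one-dimensional step you propose to redo by hand). However, your sketch misses the structural point on which the lemma actually turns: once the expansion \eqref{lemma maximal2.2} is inserted, the \emph{linear} part of $\varphi_\epsilon$ combines with the spatial phase, so the one-dimensional phase is $\epsilon t s^3+s(r-t)$, and every estimate must be expressed in the distance $|r-t|$ to the light cone, not in $r$. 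Stationary points with $\rho\sim 2^k$ exist only on the annulus $|r-t|\lesssim \epsilon t\, 2^{2k}\le c2^{2k}$ (this is exactly where $\epsilon t\le 2$ enters), and there the van der Corput gain $(\epsilon t 2^k)^{-1/2}$ rewrites as $2^{k/2}|r-t|^{-1/2}$, which is $\epsilon$-free; this is the content of Proposition \ref{prop maximal} applied with $r$ replaced by $r-t$, and it is what produces the paper's bound $c2^{kn/2}\int_{1\le |r-t|\le c2^{2k}}r^{\frac{n}{2}-\frac12}|r-t|^{-\frac12}dr\le c2^{\frac{3kn}{2}}(1+|t|^{\frac{n}{2}-\frac12})$. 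Your proposed assembly --- integrating $r^{(n-1)/2}$ against the van der Corput gain from the origin out to $R_{\max}\sim t\max_{\rho\sim 2^k}|3\epsilon\rho^2-1|$ --- fails if executed literally: in the regime $\epsilon 2^{2k}\lesssim 1$ with $t$ large one gets $(\epsilon t 2^k)^{-1/2}R_{\max}^{(n+1)/2}\sim \epsilon^{-1/2}2^{k(n-2)/2}t^{n/2}$, and $\epsilon t\le 2$ gives only a \emph{lower} bound on $\epsilon^{-1/2}$, so the constant is not uniform in $\epsilon$. Uniformity and the factor $|t|^{\frac{n}{2}-\frac12}$ (which comes from the surface measure of the sphere $|x|\approx t$) emerge only after localizing the gain to the thin annulus around the light cone.

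Two secondary inaccuracies. First, there is no caustic on the dyadic support: the second derivative of the phase is $6\epsilon t\rho\sim \epsilon t 2^k\neq 0$ for $\rho\in[2^{k-1},2^{k+1}]$, so the third-derivative $(\epsilon t)^{-1/3}$ fallback you reserve for $\rho\sim\epsilon^{-1/2}$ is never needed for the pieces $\psi_k$ (at most for the unit piece $\psi\in C_0^{\infty}([-10,10])$, where $\rho$ may vanish); the vanishing of the group velocity $3\epsilon\rho^2-1$ is a first-derivative phenomenon, already bookkept by the variable $r-t$. Second, the pointwise bounds do \emph{not} assemble into a profile decreasing in $r$ via the factor $r^{-(n-1)/2}$: the majorant carries a traveling bump of height $2^{kn}$ on $||x|-t|\le 1$, which is why the paper defines $\mathcal{H}_k$ piecewise in $||x|-t|$ and adds the term $2^{kn}(1+|x|)^{-m}$ before claiming the monotone $H_k$; your phrase ``decreases in $r$ through the cut-off imposed by the stationary set'' glosses over precisely the point that this construction is designed to handle.
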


The following estimate, essentially proved in Proposition 2.6 of
\cite{KPV1}, will be useful.
\begin{proposition} \label{prop maximal}
For $k \in \mathbb Z_+$, $\epsilon t \in (0,2]$, $r \in \mathbb R$
and $\psi_k$ as in Lemma \ref{lemma maximal}, define
\begin{equation} \label{prop maximal.1}
I_k(t,r)=\int_0^{+\infty}e^{i(\epsilon ts^3+sr)}\psi_k(s)ds.
\end{equation}
Then
\begin{equation} \label{prop maximal.2}
|I_k(t,r)| \le F_k(r)= \left\{\begin{array}{lll}
c2^k  & \text{for} & |r| \le 1 \\
c2^{\frac{k}{2}}r^{-\frac12} & \text{for} & 1 \le |r| \le c2^{2k} \\
cr^{-N} & \text{for} & |r|>c2^{2k}
\end{array} \right.
\end{equation}
for any $N \in \mathbb Z_+$.
\end{proposition}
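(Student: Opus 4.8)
The plan is to treat $I_k(t,r)=\int_0^\infty e^{i\phi(s)}\psi_k(s)\,ds$ as a one-dimensional oscillatory integral with phase $\phi(s)=\epsilon t\,s^3+rs$ and amplitude supported in $s\sim 2^k$, and to estimate it by van der Corput's lemma after a case split in $|r|$. The relevant derivatives are $\phi'(s)=3\epsilon t\,s^2+r$, $\phi''(s)=6\epsilon t\,s$ and $\phi'''(s)=6\epsilon t$; since $\phi''>0$ on the support, $\phi'$ is strictly increasing, so there is at most one stationary point. I would also use that, by the natural scaling of $\psi_k$, one has $\|\psi_k^{(j)}\|_{L^1}\lesssim 1$ for each $j$, so the amplitude factors arising in van der Corput and in the integrations by parts stay bounded.

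For $|r|\le 1$ I would simply use the trivial bound $|I_k|\le\|\psi_k\|_{L^1}\lesssim 2^k$, which gives the first line of \eqref{prop maximal.2}. For $|r|>c2^{2k}$ I would argue by nonstationary phase: since $\epsilon t\le 2$, on the support $3\epsilon t s^2\le C2^{2k}$, so choosing $c$ large enough forces $|\phi'(s)|\ge |r|/2$ there, with no stationary point. Writing $e^{i\phi}=(i\phi')^{-1}\tfrac{d}{ds}e^{i\phi}$ and integrating by parts $N$ times — each step gaining a factor $\sim |r|^{-1}$ while the derivatives of $1/\phi'$ remain controlled because $|\phi''/\phi'|\lesssim \epsilon t\,2^k/|r|\lesssim 1$ — yields $|I_k|\lesssim_N |r|^{-N}$, the third line.

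The intermediate regime $1\le |r|\le c2^{2k}$, where the target is $2^{k/2}|r|^{-1/2}$, is the heart of the matter. If $r>0$ then $\phi'$ is increasing with $\phi'\ge r\ge 1$, so the first-derivative van der Corput estimate gives $|I_k|\lesssim r^{-1}\le 2^{k/2}r^{-1/2}$. If $r<0$, a stationary point $s_c=\sqrt{|r|/(3\epsilon t)}$ may enter the support; there I would use the second-derivative estimate $|I_k|\lesssim |\phi''|_{\min}^{-1/2}\lesssim (\epsilon t\,2^k)^{-1/2}$, and observe that this equals $2^{k/2}|r|^{-1/2}$ exactly when $|r|\sim\epsilon t\,2^{2k}$, i.e.\ precisely in the range where $s_c\sim 2^k$ sits in the support. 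When instead $|r|\gg \epsilon t\,2^{2k}$ the stationary point lies outside a fixed dilate of the support, $\phi'$ is monotone with $|\phi'|\gtrsim |r|$, and the first-derivative estimate again returns $|I_k|\lesssim |r|^{-1}\le 2^{k/2}|r|^{-1/2}$.

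The main obstacle — and the reason the hypothesis is $\epsilon t\le 2$ — is matching the second-derivative bound $(\epsilon t\,2^k)^{-1/2}$, which has the wrong shape a priori, to the desired $2^{k/2}|r|^{-1/2}$ uniformly in $\epsilon$ and $t$; this is what forces the split of the case $r<0$ at the threshold $|r|\sim \epsilon t\,2^{2k}$ dictated by the location of $s_c$, and it is where one must verify that every constant produced is an absolute van der Corput constant, independent of $\epsilon$ and $t$. Equivalently, setting $\tau=\epsilon t\in(0,2]$ turns $I_k$ into $\int_0^\infty e^{i(\tau s^3+rs)}\psi_k(s)\,ds$, which is exactly the integral estimated in Proposition 2.6 of \cite{KPV1} with time variable $\tau$, so the statement can also be read off directly from that reference.
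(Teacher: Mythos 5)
Your argument is correct, and it is in fact more than the paper provides: the paper does not prove this proposition at all, but simply invokes Proposition 2.6 of \cite{KPV1} (``essentially proved in''), and your closing observation --- that the substitution $\tau=\epsilon t\in(0,2]$ turns $I_k$ into the integral $\int_0^\infty e^{i(\tau s^3+rs)}\psi_k(s)\,ds$ treated there --- is exactly the mechanism by which that citation applies. Your self-contained van der Corput proof is a valid alternative, and its key structural point is the right one: the split of the case $r<0$, $1\le|r|\le c2^{2k}$ at the threshold $|r|\sim\epsilon t\,2^{2k}$ is what reconciles the second-derivative bound $(\epsilon t\,2^k)^{-1/2}$ with the target $2^{k/2}|r|^{-1/2}$ uniformly in $\epsilon$ and $t$; note that the second-derivative estimate actually covers the whole range $|r|\lesssim \epsilon t\,2^{2k}$ regardless of where the stationary point $s_c$ sits (since $|\phi''|\gtrsim\epsilon t\,2^k$ on all of $\operatorname{supp}\psi_k$), so your dichotomy handles the subcase $s_c\ll 2^k$ as well, even though your prose only mentions $s_c\sim 2^k$. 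The nonstationary regimes ($r>0$, and $|r|\gg\max\{\epsilon t\,2^{2k},1\}$ or $|r|>c2^{2k}$) are handled as you say, with $|\phi''/\phi'^2|\lesssim 2^{-k}|r|^{-1}$ keeping the iterated integrations by parts under control.

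One small caveat, which applies equally to the paper's formulation and to \cite{KPV1}: the hypotheses $\psi_k\in C_0^\infty([2^{k-1},2^{k+1}])$, $0\le\psi_k\le1$ alone do not control $\|\psi_k'\|_{L^1}$, which your van der Corput and integration-by-parts steps (and the uniformity of $c$ in $k$) require; the tacit convention, which you correctly adopt via ``the natural scaling of $\psi_k$,'' is that $\psi_k$ is a dilate of a fixed bump, so that $\|\psi_k^{(j)}\|_{L^1}\lesssim 2^{(1-j)k}$. This is consistent with how the lemma is used in the proof of the maximal function estimate, where the cutoffs $\tilde\psi_k$ arise by rescaling, so it is a presentational gap shared with the source, not a flaw in your argument.
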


Moreover, it is known (see \cite{SW} for example) that the Fourier
transform of a radial function $f(|x|)=f(s)$ is still radial and is
given by
\begin{equation} \label{lemma maximal.3}
\widehat{f}(r)=\widehat{f}(|\xi|)=r^{-\frac{n-2}2}
\int_0^{\infty}f(s)J_{\frac{n-2}2}(rs)s^{\frac{n}{2}}ds,
\end{equation}
where $J_m$ is the Bessel function, defined by
\begin{equation} \label{lemma maximal.3b}
J_m(r)=\frac{(r/2)^m}{\Gamma(m+1/2)\pi^{\frac12}}\int_{-1}^1e^{irs}(1-s^2)^{m-\frac12}ds,
\quad \text{for} \quad m>-\frac12.
\end{equation}
Next, we list some properties of the Bessel functions (see
\cite{SW}, \cite{KPV5}, \cite{GPW} and the references therein).
\begin{lemma} \label{lemma maximal2} It holds that
\begin{equation} \label{lemma maximal2.1}
J_m(r)\underset{r \to 0}{=}O(r^m),
\end{equation}
\begin{equation} \label{lemma maximal2.2}
J_m(r)\underset{r \to
+\infty}{=}e^{-ir}\sum_{j=0}^N\alpha_{m,j}r^{-(j+\frac12)}
+e^{ir}\sum_{j=0}^N\tilde{\alpha}_{m,j}r^{-(j+\frac12)}+O(r^{-(N+\frac32)}),
\end{equation}
for any $N \in \mathbb Z_+$, and
\begin{equation} \label{lemma maximal2.3}
r^{-\frac{n-2}2}J_{\frac{n-2}2}(r)=c_n\mathcal{R}\big(e^{ir}h(r)\big),
\end{equation}
where $h$ is a smooth function satisfying
\begin{equation} \label{lemma maximal2.4}
\big|\partial_r^kh(r)\big| \le c_k(1+r)^{-\frac{n-1}2-k},
\end{equation}
for any $k \in \mathbb Z_+$.
\end{lemma}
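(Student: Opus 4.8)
The plan is to derive all three statements from Poisson's integral representation \eqref{lemma maximal.3b}, since these are classical asymptotics of Bessel functions; I will only sketch the arguments and defer the standard computations to \cite{SW}, \cite{KPV5}, \cite{GPW}. The behaviour near the origin, estimate \eqref{lemma maximal2.1}, is immediate: in \eqref{lemma maximal.3b} the only $r$-dependence outside the integral is the prefactor $(r/2)^m$, and since $m>-\frac12$ we have $m-\frac12>-1$, so the integral $\int_{-1}^1 e^{irs}(1-s^2)^{m-\frac12}\,ds$ converges absolutely and is bounded uniformly in $r$ by $\int_{-1}^1 (1-s^2)^{m-\frac12}\,ds$. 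Hence $J_m(r)=O(r^m)$ as $r\to 0$.

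For the asymptotic expansion at infinity \eqref{lemma maximal2.2}, I would exploit that the phase $e^{irs}$ in \eqref{lemma maximal.3b} has no stationary point in the interior of $[-1,1]$, so the whole contribution for large $r$ comes from the two endpoints $s=\pm 1$. Writing $(1-s^2)^{m-\frac12}=(1-s)^{m-\frac12}(1+s)^{m-\frac12}$ and integrating by parts repeatedly, each step lowers the power of $r$ by one and produces boundary contributions carrying the oscillatory factors $e^{\pm ir}$; the half-integer exponents $r^{-(j+\frac12)}$ arise precisely from the endpoint exponent $m-\frac12$. Collecting $N+1$ terms at each endpoint yields \eqref{lemma maximal2.2} with remainder $O(r^{-(N+\frac32)})$, the coefficients $\alpha_{m,j}$ and $\tilde\alpha_{m,j}$ being explicit; since $J_m$ is real for real $r$ one has $\tilde\alpha_{m,j}=\overline{\alpha_{m,j}}$.

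Finally, to obtain \eqref{lemma maximal2.3}--\eqref{lemma maximal2.4} set $\nu=\frac{n-2}2\ge 0$. By the reality just noted, the $e^{-ir}$-series in \eqref{lemma maximal2.2} is the complex conjugate of the $e^{ir}$-series, so $J_\nu(r)=2\,\mathcal{R}\big(e^{ir}g(r)\big)$, where $g$ is the $e^{ir}$-coefficient; multiplying by $r^{-\nu}$ and absorbing constants defines $h$, and the leading decay $r^{-\nu}\cdot r^{-\frac12}=r^{-\frac{n-1}2}$ gives the $k=0$ case of \eqref{lemma maximal2.4}. For $r$ near $0$ the function $r^{-\nu}J_\nu(r)$ is smooth by \eqref{lemma maximal2.1} together with the even Taylor series of $J_\nu$, so the bound holds trivially there. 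The main obstacle is the estimate on \emph{all} derivatives: a single decay estimate on $h$ is immediate, but propagating the extra $(1+r)^{-k}$ gain requires a representation of $h$ that is stable under differentiation. The cleanest route is to realise $h$, up to a constant, as $r^{-\nu}e^{-ir}H^{(1)}_\nu(r)$ via the Hankel function, whose large-$r$ expansion is $e^{ir}$ times a genuine asymptotic series in $r^{-\frac12}$ with smooth coefficients that may be differentiated term by term, so that each $\partial_r$ gains a factor decaying like $(1+r)^{-1}$; equivalently one keeps the integration-by-parts remainder as an explicit oscillatory integral and differentiates under the integral sign, rather than naively differentiating the asymptotic series \eqref{lemma maximal2.2} term by term.
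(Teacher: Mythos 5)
The paper offers no proof of this lemma at all: it is stated as a list of classical Bessel-function facts with the citations \cite{SW}, \cite{KPV5}, \cite{GPW}, which is exactly the level at which your sketch operates. Your reconstruction is correct and matches the standard arguments behind those references --- the $O(r^m)$ bound directly from the Poisson representation \eqref{lemma maximal.3b}, the expansion \eqref{lemma maximal2.2} from the endpoint contributions at $s=\pm1$ (where the prefactor $(r/2)^m$ cancels the $m$-dependence of the endpoint exponent $m-\frac12$ to produce the half-integer powers), and, crucially, the observation that \eqref{lemma maximal2.4} cannot be obtained by differentiating the asymptotic series term by term but requires a representation of $h$ stable under differentiation (Hankel function or an explicit oscillatory-integral remainder, with a cutoff patching near $r=0$).
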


\begin{proof}[Proof of Lemma \ref{lemma maximal}]
By using \eqref{lemma maximal.3}, it follows that
\begin{equation} \label{lemma maximal.4}
\mathcal{I}_k(t,r):=\int_{\mathbb
R^n}e^{i(t\varphi_{\epsilon}(\xi)+x.\xi)}\psi_k(|\xi|)d\xi
=r^{-\frac{n-2}{2}}\int_0^{+\infty}e^{it(\epsilon s^3-s)}\psi_k(s)
J_{\frac{n-2}2}(rs)s^{\frac{n}{2}}ds,
\end{equation}
where $r=|x| \in [0,+\infty)$ and $\epsilon t \in [0,2]$.

When $0 \le |r-t| \le 1$ or $0\le r \le 1$, it follows from \eqref{lemma maximal2.1} (or \eqref{lemma maximal2.3}--\eqref{lemma maximal2.4}) that
\begin{equation} \label{lemma maximal.5}
\big|\mathcal{I}_k(t,r)\big| \lesssim
\int_0^{+\infty}\psi_k(s)s^{n-1}ds=c2^{nk}.
\end{equation}

In the case $r>1$ and $|r-t|>1$, we substitute $J_{\frac{n-2}{2}}$ by the
right-hand side of \eqref{lemma maximal2.2} in \eqref{lemma
maximal.4} and evaluate successively each term of the sum and the
remainder. Here we consider only the most difficult case, when $\tilde{\alpha}_{m,j}=0$. Then, the $j^{th}$ term has the form
\begin{displaymath}
\mathcal{I}_{k,j}(t,r):=r^{-\frac{n-2}{2}}\int_0^{+\infty}e^{i(t(\epsilon s^3-s)+sr)}
\psi_k(s)(rs)^{-(j+\frac12)}s^{\frac{n}2}ds,
\end{displaymath}
so that
\begin{displaymath}
\big|\mathcal{I}_{k,j}(t,r) \big| \le
r^{-\frac{n-2}{2}}(2^kr)^{-(j+\frac12)}2^{k\frac{n}2}
\Big|\int_0^{+\infty}e^{i(t\epsilon s^3+s(r-t)}\tilde{\psi}_k(s)ds \Big|,
\end{displaymath}
where $\tilde{\psi}_k$ is another function satisfying
$\tilde{\psi}_k \in C_0^{\infty}([2^{k-1},2^{k+1}])$ and $0 \le
\tilde{\psi}_k \le 1$. Therefore, we deduce from Proposition
\ref{prop maximal} that
\begin{equation} \label{lemma maximal.5a}
\big|\mathcal{I}_{k,j}(t,r) \big| \lesssim \left\{
\begin{array}{lll}2^{k(\frac{n}2-j)}r^{-(\frac{n}2-\frac12+j)}|r-t|^{-\frac12} & \text{for} & |r-t| \in [1,c2^{2k}] \\
2^{k(\frac{n}2-j-\frac12)}r^{-(\frac{n}2-\frac12+j)}|r-t|^{-m} & \text{for} & |r-t| > c2^{2k},\end{array} \right.
\end{equation}
for any $m \in \mathbb Z_+$. Next, we fix $N=N(n)>\frac{n-1}2$ and
bound the remainder
\begin{displaymath}
\mathcal{R}_k(t,r):=r^{-\frac{n-2}2}\int_0^{+\infty}e^{i(t\varphi_{\epsilon}(s)+sr)}
\psi_k(s)(rs)^{-(N+\frac32)}s^{\frac{n}2}ds
\end{displaymath}
as follows
\begin{equation} \label{lemma maximal.6}
\big|\mathcal{R}_k(t,r) \big| \lesssim
2^{k(\frac{n}2-N-\frac12)}r^{-(\frac{n}2+N+\frac12)} \lesssim
r^{-m},
\end{equation}
with $m>n$.

Therefore, if we define
\begin{displaymath}
\mathcal{H}_k(|x|)=\left\{\begin{array}{lll}2^{kn} & \text{for} & |x| \le 1 \ \text{or} \
||x|-t| \le 1\\
 \sum_{j=0}^N2^{k(\frac{n}2-j)}|x|^{-(\frac{n}2-\frac12+j)}||x|-t|^{-\frac12}
 & \text{for} &
 1 \le ||x|-t| \le c2^{2k} \\ 2^{k(\frac{n}2-\frac12)}||x|-t|^{-m} & \text{for} & ||x|-t| >c2^{2k},
 \end{array} \right.
\end{displaymath}
and
\begin{displaymath}
H_k(|x|)=\mathcal{H}_k(|x|)+\frac{2^{kn}}{(1+|x|)^m}
\end{displaymath}
it follows from \eqref{lemma maximal.4}--\eqref{lemma maximal.6}
that $\big|\mathcal{I}_k(t,|x|)\big|\le cH_k(|x|)$ and a simple
computation leads to
\begin{displaymath}
\begin{split}
\int_{\mathbb R^n}H_k(|x|)dx &\le c2^{kn}+c2^{\frac{kn}2}
\int_{1 \le |r-t|\le c2^{2k}}r^{\frac{n}2-\frac12}|r-t|^{-\frac12}dr \\
& \le c2^{\frac{3kn}2}(1+|t|^{\frac{n}2-\frac12}),
\end{split}
\end{displaymath}
which concludes the proof of Lemma \ref{lemma maximal}.
\end{proof}

Finally, at this point, the proof of Theorem \ref{maximal} follows
closely the argument of Kenig, Ponce and Vega in the case of the
Schr\"odinger equation in Theorem 3.2 of \cite{KPV5}. Therefore, we
will omit it.

\subsubsection{Strichartz estimates}

Strichartz estimates for unitary groups of the form $e^{it\phi(D)}$
in $\mathbb R^n$, $n \ge 2$, were derived in the case where $\phi$
is an elliptic polynomial by Kenig, Ponce and Vega \cite{KPV} and in
the case where $\phi$ is a general polynomial in $\mathbb R^2$ by
Ben-Artzi, Koch and Saut \cite{BAKS}. When the phase function $\phi$
is a radial (nonhomogeneous) function and its derivative does not
vanish, some techniques were recently developed by Cho and Ozawa
\cite{CO} and Guo, Peng and Wang \cite{GPW}.

In the sequel, we will use the techniques developed in \cite{GPW},
based on the ones used in \cite{KPV}  and on the representation of the Fourier
transform of a radial function in terms of the Bessel function (see
formula \eqref{lemma maximal.3}), to prove Strichartz estimates
associated to the unitary groups $U^{\pm}_{\epsilon}$ defined in
\eqref{group}. However, in our case, we do not need to perform a
dyadic decomposition in frequencies.
\begin{theorem} \label{Strichartz-theo}
Let $0<\epsilon \le 1$, $T>0$  and $0 \le \alpha < \frac12$. Then,
it holds that
\begin{equation} \label{Strichartz-theo.1}
\|D^{\alpha}_xU_{\epsilon}^{\pm}u_0\|_{L^{q_{\alpha}}_TL^{\infty}_x}
\lesssim \epsilon^{-\kappa_{\alpha}}\|u_0\|_{L^2},
\end{equation}
for all $u_0 \in L^2(\mathbb R^2)$, where the implicit constant is
independent of $\epsilon$ and $T$, $q_{\alpha}$ is the root of the
polynomial
$$3q^2-2(7-2\alpha)q+12=0,$$
satisfying $q_{\alpha}>2$ and
$\kappa_{\alpha}=\frac12+\frac{\alpha}2-\frac1{4q_{\alpha}}$.
\end{theorem}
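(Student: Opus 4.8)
The plan is to prove the homogeneous estimate by the $TT^*$ method, reducing it to quantitative decay information on the space--time kernel of the propagator. Writing $Au_0:=D^{\alpha}_xU_{\epsilon}^{\pm}(\cdot)u_0$, the bound $\|Au_0\|_{L^{q_{\alpha}}_TL^{\infty}_x}\lesssim\epsilon^{-\kappa_{\alpha}}\|u_0\|_{L^2}$ is equivalent, by duality against $L^{q_{\alpha}'}_TL^1_x$, to the boundedness of $AA^*$, whose action is convolution in $x$ (integrated over $s\in[0,T]$, with $\tau=t-s$) against the kernel
\[
K_{\tau}(x)=\mathcal{F}^{-1}\big(|\xi|^{2\alpha}e^{i\tau\varphi_{\epsilon}(\xi)}\big)(x),\qquad \varphi_{\epsilon}(\xi)=\epsilon|\xi|^3-|\xi|.
\]
Everything then reduces to pointwise decay estimates for the oscillatory integral defining $K_{\tau}$, after which the remaining time integration is carried out by a one--dimensional Hardy--Littlewood--Sobolev/Young inequality; the constraint $q_{\alpha}>2$ is exactly what makes that time integration converge. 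Since the target space is the non-admissible endpoint $L^{\infty}_x$, one cannot appeal to the Keel--Tao machinery directly and must keep track of the spatial profile of $K_\tau$, not merely its $L^\infty_x$ size.

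To analyze $K_{\tau}$ I would first normalize the phase by the dilation $\xi=\epsilon^{-1/2}\eta$, which turns $\tau\varphi_{\epsilon}$ into $\tau\epsilon^{-1/2}(|\eta|^3-|\eta|)$ and shows that every $\epsilon$-weight arises from the Jacobian, the weight $|\xi|^{2\alpha}$, and the rescaled time $\epsilon^{-1/2}\tau$; this is the mechanism producing a clean power $\epsilon^{-\kappa_{\alpha}}$ with $\kappa_{\alpha}$ affine in $\alpha$. For the normalized kernel I would use the radial Fourier representation \eqref{lemma maximal.3} (here $n=2$, so the Bessel function is $J_0$) to write it as a single one-dimensional integral $\int_0^{\infty}s^{2\alpha+1}e^{i\tilde{\tau}(s^3-s)}J_0(\tilde{r}s)\,ds$, insert the asymptotics \eqref{lemma maximal2.3}--\eqref{lemma maximal2.4} of $J_0$, and estimate the resulting oscillatory integrals with phases $\tilde{\tau}(s^3-s)\pm\tilde{r}s$ by the van der Corput bounds already organized in Proposition \ref{prop maximal} and Lemma \ref{lemma maximal}. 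Following \cite{GPW}, I would apply these bounds \emph{globally} in the frequency variable (no dyadic decomposition), splitting instead the spatial variable into the region near the origin and the stationary-phase region, exactly as in the three cases of the proof of Lemma \ref{lemma maximal}; this yields a pointwise bound on $K_{\tau}(x)$ carrying both a temporal and a spatial decay rate.

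The main obstacle is the degeneracy of the phase. The radial curvature $\varphi_{\epsilon}''=6\epsilon|\xi|$ vanishes at the origin, while the angular Hessian eigenvalue $(3\epsilon|\xi|^2-1)/|\xi|$ vanishes at the critical frequency $|\xi|=(3\epsilon)^{-1/2}$, where the group velocity is zero. These two degeneracies prevent the nondegenerate two-dimensional rate $|\tau|^{-1}$, and handling them is precisely where the technique of \cite{GPW} is essential. One expects the honest time decay to saturate only at the slower rate $|\tilde{\tau}|^{-1/2}$ near the vanishing-group-velocity frequency, which is weaker than what a crude $L^1_x\to L^{\infty}_x$ bound would require; this is why retaining the spatial decay and balancing it against the temporal decay inside the $L^{q_{\alpha}}_TL^{\infty}_x$ norm is unavoidable. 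Optimizing this balance (equivalently, optimizing the admissible exponent in the time integration subject to $q>2$) is what forces $q_{\alpha}$ to be the larger root of $3q^2-2(7-2\alpha)q+12=0$ and fixes $\kappa_{\alpha}=\tfrac12+\tfrac{\alpha}2-\tfrac1{4q_{\alpha}}$. The endpoint $\alpha=\tfrac12$ makes this quadratic degenerate into the double root $q=2$, which is exactly the threshold at which the time integration ceases to converge; this accounts for the restriction $\alpha<\tfrac12$.
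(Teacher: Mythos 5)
Your skeleton coincides with the paper's up to the last step: the Tomas duality reduction to the retarded estimate \eqref{Strichartz-theo.2}, the radial Bessel representation \eqref{lemma maximal.3} with the asymptotics \eqref{lemma maximal2.3}--\eqref{lemma maximal2.4}, van der Corput near the stationary point, and the recognition that the time decay saturates at $t^{-1/2}$ because the group velocity vanishes at $|\xi|=(3\epsilon)^{-1/2}$ --- this is exactly the content of Proposition \ref{Strichartz-prop}, whose two-regime kernel $k_{\beta,\epsilon}$ in \eqref{Strichartz-prop1b} (with $\beta=2\alpha$) matches your prediction. The genuine gap is in how you pass from the kernel bound to the $L^{q'}_tL^1_x\to L^{q}_tL^\infty_x$ estimate. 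You propose ``a one-dimensional Hardy--Littlewood--Sobolev/Young inequality,'' but no fixed-kernel Young/HLS argument can produce the exponent $q_\alpha$: the tail $\epsilon^{-\frac34-\frac{\beta}2}t^{-1/2}$ alone would force $q=4$, the short-time singularity $(\epsilon t)^{-\frac{2+\beta}3}$ would force $q=\frac{6}{2+\beta}$, and $q_\alpha$ (e.g.\ $q_0=\frac{7+\sqrt{13}}3\approx 3.54$) lies strictly between the two, so the full kernel is in no weak Lebesgue class good enough to reach it. The missing idea is the Stein/Kenig--Ponce--Vega maximal-function device that the paper actually uses: split $k_{\beta,\epsilon}=k^0_{\beta,\epsilon}+k^{\infty}_{\beta,\epsilon}$ at the \emph{tunable} threshold $\theta\epsilon^{1/2}$ (this is precisely why Proposition \ref{Strichartz-prop} keeps $\theta$ as a free parameter), bound the near piece pointwise by $\epsilon^{\frac{1+\beta}2}\theta^{\frac{1-\beta}3}\mathcal{M}\varphi(t)$, where $\varphi(t)=\|g(\cdot,t)\|_{L^1_x}$, bound the far piece by H\"older in time (this, and only this, is where $q>2$ enters), and then choose $\theta=\theta(t)$ \emph{pointwise in $t$} to balance the two terms, yielding $\|\varphi\|_{L^{q'}}^{\gamma}\mathcal{M}\varphi(t)^{1-\gamma}$ with $\gamma=\frac{2(1-\beta)q}{5q-2\beta q-6}$. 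The quadratic $3q^2-2(7-2\alpha)q+12=0$ is then not a convergence condition but the closing identity $(1-\gamma)q=q'$, which lets the $L^{q'}$-boundedness of the maximal function finish the proof. Your phrase ``optimizing this balance'' gestures at something of this kind, but you supply no mechanism, and without it the claimed exponents are out of reach.

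Relatedly, your structural claim that for the non-admissible target $L^\infty_x$ one ``must keep track of the spatial profile of $K_\tau$, not merely its $L^\infty_x$ size'' is the opposite of what happens: after Minkowski's inequality (see \eqref{Strichartz-theo.3}) the paper uses only the $L^1_x\to L^\infty_x$ decay of Proposition \ref{Strichartz-prop}, and the non-admissibility is absorbed entirely in the time variable by the threshold optimization just described. Spatial profiles of the kernel are retained in this paper only for the maximal function estimate, Theorem \ref{maximal}; your citations of Lemma \ref{lemma maximal} and Proposition \ref{prop maximal} belong to that proof, whereas the dispersive estimate is proved independently, with smooth cutoffs $\psi_1,\psi_2$ around the stationary point of the phase $u^3-(r+\alpha)u$ after the change of variables $u=(\epsilon t)^{1/3}s$. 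Two smaller slips: $\kappa_\alpha=\frac12+\frac{\alpha}2-\frac1{4q_\alpha}$ is not affine in $\alpha$, since $q_\alpha$ itself depends on $\alpha$, so your scaling heuristic cannot by itself identify the $\epsilon$-power (the dominant contribution $\epsilon^{-(1+\frac{\beta}2-\frac1{2q})}=\epsilon^{-2\kappa_\alpha}$ comes from the far-piece H\"older bound); on the other hand, your endpoint observation that $\alpha=\frac12$ degenerates the quadratic to the double root $q=2$ is correct and does explain the restriction $\alpha<\frac12$.
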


\begin{remark} When $\alpha=0$, then
$q=\frac{7+\sqrt{13}}3=\frac72+$ and
$\kappa=\frac12-\frac1{4q}=\frac37+$. On the other hand, we have
that $\lim_{\alpha \rightarrow \frac12}q_{\alpha}=2$ and
$\lim_{\alpha \rightarrow \frac12}\kappa_{\alpha}=\frac58$
\end{remark}

For sake of simplicity, we will fix $U_{\epsilon}=U^+_{\epsilon}$ in
the rest of this subsection. First, we derive the following decay
estimate for the solution to the linear problem \eqref{suff}.
\begin{proposition} \label{Strichartz-prop}
Let $0<\epsilon \le 1$ and $0\le \beta \le 1$. Then, it holds that
\begin{equation} \label{Strichartz-prop1}
\|D_x^{\beta}U_{\epsilon}(t)u_0\|_{L^{\infty}_x} \lesssim k_{\beta,
\epsilon}(t)\|u_0\|_{L^1},
\end{equation}
for all $t \in \mathbb R$, where $k_{\beta, \epsilon}$ is given by
\begin{equation} \label{Strichartz-prop1b}
k_{\beta, \epsilon}(t) = \left\{
\begin{array}{lll}
(\epsilon t)^{-\frac{2+\beta}3} & \text{if} & t\le \theta
\epsilon^{\frac12} \\
\epsilon^{-\frac34-\frac{\beta}2}t^{-\frac12} & \text{if} & t\ge
\theta \epsilon^{\frac12}
\end{array} \right. ,
\end{equation}
and $\theta$ is any positive constant independent of $\epsilon$.
\end{proposition}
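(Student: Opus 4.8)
The plan is to prove the stated $L^1\!\to\!L^\infty$ decay by bounding the convolution kernel of $D_x^\beta U_\epsilon(t)$ and optimizing over the two natural frequency regimes. Writing $D_x^\beta U_\epsilon(t)u_0 = K_t * u_0$ with
\begin{displaymath}
K_t(x) = c\int_{\mathbb R^2} e^{i(t\varphi_\epsilon(\xi)+x\cdot\xi)}|\xi|^\beta\,d\xi, \qquad \varphi_\epsilon(\xi)=\epsilon|\xi|^3-|\xi|,
\end{displaymath}
Young's inequality reduces \eqref{Strichartz-prop1} to the kernel bound $\|K_t\|_{L^\infty_x}\lesssim k_{\beta,\epsilon}(t)$. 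Since $\varphi_\epsilon$ is radial, I would pass to the Bessel representation \eqref{lemma maximal.3} with $n=2$, obtaining the one-dimensional oscillatory integral
\begin{displaymath}
K_t(x) = c\int_0^{+\infty}e^{it\varphi_\epsilon(s)}J_0(rs)\,s^{\beta+1}\,ds, \qquad r=|x|.
\end{displaymath}
Splitting according to $rs\lesssim 1$ and $rs\gtrsim 1$ and inserting the Bessel asymptotics of Lemma \ref{lemma maximal2} (so that, for $rs\gtrsim1$, $J_0(rs)$ is, up to constants, the real part of $e^{irs}h(rs)$ with $|h(\rho)|\lesssim(1+\rho)^{-1/2}$) then reduces matters to one-dimensional integrals with phase $t\varphi_\epsilon(s)\pm rs$ and amplitude of size $s^{\beta+1}(1+rs)^{-1/2}$, to which the van der Corput estimates of the type recorded in Proposition \ref{prop maximal} apply. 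The threshold $t\sim\theta\epsilon^{1/2}$ is dictated by the scaling $\xi=(\epsilon t)^{-1/3}\eta$: it is exactly when the linear part $-t|\xi|$ ceases to be a bounded perturbation of the cubic part $t\epsilon|\xi|^3$ on the natural frequency scale.

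For the small-time regime $t\le\theta\epsilon^{1/2}$ I would carry out this scaling explicitly. Setting $\xi=(\epsilon t)^{-1/3}\eta$ gives
\begin{displaymath}
K_t(x) = (\epsilon t)^{-\frac{2+\beta}3}\int_{\mathbb R^2}e^{i(|\eta|^3-\mu|\eta|+y\cdot\eta)}|\eta|^\beta\,d\eta, \qquad \mu=(t^2/\epsilon)^{1/3},\quad y=(\epsilon t)^{-1/3}x,
\end{displaymath}
where $0\le\mu\le\theta^{2/3}$ is bounded precisely because $t\le\theta\epsilon^{1/2}$. It then suffices to bound the remaining integral uniformly in $y$ and in the bounded parameter $\mu$. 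The radial reduction produces a one-dimensional phase that is a perturbation of the pure cubic $\eta^3$, whose third derivative is the constant $6$; van der Corput's lemma with $k=3$ therefore yields a bound independent of the linear coefficient, while the high-frequency tail converges after integration by parts since the phase derivative grows quadratically. This produces the prefactor $(\epsilon t)^{-(2+\beta)/3}$ and hence the first line of \eqref{Strichartz-prop1b}.

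For the large-time regime $t\ge\theta\epsilon^{1/2}$ the decay is governed by the zero–group–velocity frequency $s_\ast=(3\epsilon)^{-1/2}$, the unique root of $\varphi_\epsilon'$. Near $s_\ast$ one has $\varphi_\epsilon''(s_\ast)=6\epsilon s_\ast=2\sqrt3\,\epsilon^{1/2}$, so the second–derivative van der Corput estimate supplies a factor $(t\epsilon^{1/2})^{-1/2}=t^{-1/2}\epsilon^{-1/4}$, while the amplitude $s^{\beta+1}$ evaluated at $s_\ast\sim\epsilon^{-1/2}$ (where the companion factor $(1+rs)^{-1/2}$ is harmless, since $r\to0$ at a stationary point for which $\varphi_\epsilon'=0$) contributes $\epsilon^{-(\beta+1)/2}$. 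The product is $t^{-1/2}\epsilon^{-3/4-\beta/2}$, matching the second line of \eqref{Strichartz-prop1b}; as a consistency check the two branches agree at $t=\theta\epsilon^{1/2}$, both giving $\epsilon^{-1-\beta/2}$. The remaining frequency ranges — low frequencies, where the amplitude $s^{\beta+1}$ vanishes, and frequencies away from $s_\ast$, where either $\varphi_\epsilon''$ is larger or the Bessel decay $(1+rs)^{-1/2}$ is effective — are handled by the same case analysis as in Proposition \ref{prop maximal}, following the scheme of \cite{GPW}, and are seen not to exceed this bound.

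I expect the main obstacle to be the large-time case: one must combine the Bessel amplitude decay with the van der Corput estimates \emph{uniformly in} $r=|x|$, tracking all of $\epsilon$, $t$, $r$ and $\beta$ through the several sub-cases (stationary point present or absent, $|r-t|$ small or large), and verify that the worst contribution is precisely the one coming from the critical frequency $s_\ast$. Ensuring that no region beats $t^{-1/2}\epsilon^{-3/4-\beta/2}$, and that all constants remain independent of $\epsilon$ and $T$, is the delicate bookkeeping at the heart of the proof.
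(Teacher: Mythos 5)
Your plan follows the paper's proof in all essentials: the reduction via Young's inequality to an $L^\infty$ bound on the kernel, the radial Bessel representation \eqref{lemma maximal.3} together with the decomposition \eqref{lemma maximal2.3}--\eqref{lemma maximal2.4}, the change of variables $u=(\epsilon t)^{1/3}s$ (your parameter $\mu=(t^2/\epsilon)^{1/3}$ is exactly the paper's $\alpha=t^{2/3}\epsilon^{-1/3}$), and the stationary/non-stationary splitting handled by a second-derivative van der Corput estimate plus one integration by parts away from the stationary point.

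There is, however, one step that would fail as stated. In the small-time regime you invoke van der Corput with $k=3$ to conclude a bound \lq\lq independent of the linear coefficient,\rq\rq\ with the high-frequency tail handled by integration by parts. But after the radial reduction the linear coefficient of the reduced phase $u^3-(\mu+\rho)u$ contains $\rho=|y|$, which is unbounded, so the stationary point $u_*\sim(\mu+\rho)^{1/2}$ escapes to infinity with $|y|$: it lies neither in a fixed compact set nor in a region where the phase derivative is bounded below, so the integration-by-parts tail cannot absorb it. On an interval around $u_*$ the amplitude $u^{\beta+1}|h(\rho u)|\lesssim u^{\beta+1}(1+\rho u)^{-1/2}$ has supremum of size $\rho^{\beta/2-1/4}$, which is unbounded for $\beta>1/2$; since the third-derivative van der Corput bound carries the factor $\|\psi\|_{L^{\infty}}+\|\psi'\|_{L^1}$ with no curvature gain, it cannot deliver a bound uniform in $y$ there. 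The fix is the tool you already deploy at large times: the second-derivative estimate, since the second derivative of the phase at $u_*$ is $6u_*\gtrsim(\mu+\rho)^{1/2}$, supplies the extra factor $(\mu+\rho)^{-1/4}$ that combines with the Bessel decay to give $\rho^{(\beta-1)/2}\le 1$ --- this is precisely the paper's computation \eqref{Strichartz-prop9}. Once this is done, your two time regimes collapse into a single argument: the paper never splits in $t$, but proves the one uniform bound $\sup_{r\ge0}|A_\beta(r)|\lesssim\max\{1,\alpha^{\frac{\beta}{2}+\frac14}\}$ of \eqref{Strichartz-prop6}, from which both lines of \eqref{Strichartz-prop1b} follow by checking whether $\alpha\lesssim 1$ or $\alpha\gtrsim1$, i.e.\ whether $t\le\theta\epsilon^{1/2}$ or not (your consistency check at $t=\theta\epsilon^{1/2}$ reflects exactly this).
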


The proof of Proposition \ref{Strichartz-prop} is based on formula
\eqref{lemma maximal.3}, Lemma \ref{lemma maximal2} and Van der
Corput's lemma:
\begin{lemma} \label{VdC}
Suppose that $f$ is a real valued $C^2$-function defined in $[a,b]$ such that
$|f''(\xi)|>1$ for any $\xi \in [a,b]$. Then
\begin{displaymath}
\Big|\int_a^be^{i\lambda f(\xi)} \psi(\xi)d\xi\Big|\lesssim |\lambda|^{-\frac12}
\big(\|\psi\|_{L^{\infty}}+\|\psi'\|_{L^1} \big),
\end{displaymath}
where the implicit constant does not depend on $a$ and $b$.
\end{lemma}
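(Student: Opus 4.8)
The statement is the classical second-order Van der Corput lemma, and the plan is to first treat the amplitude-free case $\psi \equiv 1$ with a constant \emph{independent of the endpoints} $a,b$, and then recover the general amplitude by a single integration by parts.

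For the case $\psi \equiv 1$, I would first note that since $f''$ is continuous and $|f''|>1$ throughout $[a,b]$, it cannot vanish and so keeps a constant sign; consequently $f'$ is strictly monotone and vanishes at most at one point. Let $c \in [a,b]$ be the point where $|f'|$ is minimal (either the zero of $f'$, if it lies in the interval, or the appropriate endpoint). The estimate $|f'(\xi)| = |f'(\xi)-f'(c)| = \big|\int_c^\xi f''\big| \ge |\xi - c|$, which follows from $|f''|>1$ together with the constant sign of $f''$, then gives $|f'(\xi)| \ge \delta$ whenever $|\xi - c|>\delta$. I would split the integral into the near region $\{|\xi-c|\le \delta\}$, on which the trivial bound by the length $2\delta$ is used, and the far region $\{|\xi-c|>\delta\}$, on which one integrates by parts after writing $e^{i\lambda f} = (i\lambda f')^{-1}\frac{d}{d\xi}e^{i\lambda f}$. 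The boundary terms are $O((\lambda\delta)^{-1})$, and the remaining integral is controlled by the total variation of $(f')^{-1}$, which—since $f'$ is monotone and bounded below by $\delta$ in absolute value on this region—is at most $2\delta^{-1}$; this yields a bound $\lesssim (\lambda\delta)^{-1}$ for the far part. Choosing $\delta = |\lambda|^{-1/2}$ balances the two contributions and gives $\big|\int_a^b e^{i\lambda f}\big| \lesssim |\lambda|^{-1/2}$, with a constant depending only on the lower bound for $|f''|$ and not on $a,b$.

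To pass to general $\psi$, set $F(x)=\int_a^x e^{i\lambda f(\xi)}\,d\xi$. Applying the previous step on each subinterval $[a,x]$—which is legitimate precisely because the constant there is endpoint-independent—gives $\|F\|_{L^\infty} \lesssim |\lambda|^{-1/2}$. Integrating by parts,
\begin{displaymath}
\int_a^b e^{i\lambda f(\xi)}\psi(\xi)\,d\xi = F(b)\psi(b) - \int_a^b F(\xi)\psi'(\xi)\,d\xi,
\end{displaymath}
and bounding the right-hand side by $\|F\|_{L^\infty}\big(|\psi(b)| + \|\psi'\|_{L^1}\big) \le \|F\|_{L^\infty}\big(\|\psi\|_{L^\infty} + \|\psi'\|_{L^1}\big)$ yields the claimed estimate.

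The main point to watch is the uniformity of the constant in the endpoints in the $\psi\equiv 1$ step, since this is exactly what makes the amplitude argument work; this in turn hinges on the case analysis for the location of $c$ (an interior zero of $f'$ versus an endpoint) and on verifying that $f''$ has constant sign, so that $(f')^{-1}$ is genuinely monotone on the far region and its total variation collapses to the difference of its endpoint values. Once these are in place, the optimization $\delta = |\lambda|^{-1/2}$ and the final integration by parts are routine.
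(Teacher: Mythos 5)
Your proof is correct, but note that there is nothing in the paper to compare it with: the authors state Lemma \ref{VdC} as the classical Van der Corput lemma and give no proof, using it only as an ingredient in the proof of Proposition \ref{Strichartz-prop}. What you have written is the standard textbook argument (as in Stein's \emph{Harmonic Analysis} or \cite{LP}): constant sign of $f''$, hence strict monotonicity of $f'$; the lower bound $|f'(\xi)|\ge|\xi-c|$ away from the point $c$ minimizing $|f'|$; the near/far splitting with trivial bound $2\delta$ near $c$ and an integration by parts controlling the far part by $(|\lambda|\delta)^{-1}$ via the monotonicity of $(f')^{-1}$; the balancing choice $\delta=|\lambda|^{-1/2}$; and finally the reduction of general $\psi$ to the amplitude-free case through $F(x)=\int_a^x e^{i\lambda f}$, which is exactly where the endpoint-uniformity you emphasize is needed. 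All the steps are sound. One cosmetic point: your displayed chain $|f'(\xi)|=|f'(\xi)-f'(c)|=\bigl|\int_c^\xi f''\bigr|\ge|\xi-c|$ is a literal equality only when $f'(c)=0$; in the endpoint case ($c=a$ or $c=b$, no interior zero of $f'$) the first equality should be the inequality $|f'(\xi)|\ge|f'(\xi)-f'(c)|$, which holds because $f'(c)$ and $f'(\xi)-f'(c)$ then have the same sign by monotonicity. Since you explicitly flag this case analysis at the end as the point to verify, this is a slip of notation rather than a gap.
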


\begin{proof}[Proof of Proposition \ref{Strichartz-prop}]
First observe that
\begin{equation} \label{Strichartz-prop2}
\|D^{\beta}_xU_{\epsilon}(t)u_0\|_{L^{\infty}_x} \le \big\|
\big(|\cdot|^{\beta}e^{-it\varphi_{\epsilon}}
\big)^{\vee}\big\|_{L^{\infty}_x}\|u_0\|_{L^1}.
\end{equation}
On the other hand, formulas \eqref{lemma maximal.3} and \eqref{lemma
maximal2.3} imply that
\begin{equation} \label{Strichartz-prop4}
\begin{split}
\big(|\xi|^{\beta}e^{-it\varphi_{\epsilon}} \big)^{\vee}\big(x)
&=\int_0^{+\infty}s^{\beta}e^{it(\epsilon s^3-s)}J_0(rs)sds \\
&=\int_0^{+\infty}s^{\beta}e^{it(\epsilon s^3-s)}e^{irs}h(rs)sds+
\int_0^{+\infty}s^{\beta}e^{it(\epsilon s^3-s)}e^{-irs}\overline{h(rs)}sds\\
& =I_{\beta}(r)+II_{\beta}(r)
\end{split}
\end{equation}
where $r=|x|$.

For sake of simplicity, we will assume that $t \ge 0$ and only deal with $II_{\beta}$ since $I_{\beta}$ can be handled by similar techniques. We change variables $u=(\epsilon t)^{\frac13}s$ and deduce that
\begin{equation} \label{Strichartz-prop5}
II_{\beta}(r)=(\epsilon t)^{-\frac{2+\beta}{3}}A_{\beta}\big(\frac{r}{(\epsilon t)^{\frac13}}\big),
\end{equation}
where
\begin{displaymath}
A_{\beta}(r)=\int_0^{+\infty}e^{if_r(u)}\overline{h(ru)}u^{\beta+1}du,
\end{displaymath}
the phase function $f_r(u)$ is given by
\begin{displaymath}
f_r(u)=u^3-(r+\alpha)u, \quad \text{and} \quad \alpha=t^{\frac23}\epsilon^{-\frac13}.
\end{displaymath}
Therefore
\begin{equation} \label{Strichartz-prop6}
\sup_{r \ge 0} \big|A_{\beta}(r) \big| \lesssim
\max\{1,\alpha^{\frac{\beta}2+\frac14}\}=\max\{1,t^{\frac{\beta}3+\frac16}\epsilon^{-\frac{\beta}6-\frac{1}{12}}\},
\quad \forall \, \beta \in [0,1],
\end{equation}
would imply formula \eqref{Strichartz-prop1}.

To prove \eqref{Strichartz-prop6}, we introduce the smooth real-values functions $(\psi_1,\psi_2) \in C_0^{\infty}\times C^{\infty}$ such that $0\le \psi_1, \ \psi_2\le 1$, $\psi_1(u)+\psi_2(u)=1$,
\begin{displaymath}
\text{supp}\,\psi_1 \subset \big\{u \ : \ |3u^2-(r+\alpha)| \le \frac{r+\alpha}{2} \big\}
\end{displaymath}
and
\begin{displaymath}
\psi_2 =0 \quad \text{in} \quad \big\{u \ : \ |3u^2-(r+\alpha)| \le \frac{r+\alpha}{3} \big\}.
\end{displaymath}
It follows that
\begin{equation} \label{Strichartz-prop7}
\big|A_{\beta}(r)\big| \le \big|A^1_{\beta}(r)\big|+\big|A^2_{\beta}(r)\big|,
\end{equation}
where
\begin{displaymath}
A^j_{\beta}(r)=\int_0^{+\infty}e^{if_r(u)}\overline{h(ru)}u^{\beta+1}\psi_j(u)du, \quad j \in \{1,2\}.
\end{displaymath}

First, we deal with $A_{\beta}^2$. Observe that we can restrict the oscillatory integral in the range $u \in [1,+\infty]$, since otherwise when $u \in [0,1]$, the estimate is trivial. Moreover, we deduce from the triangle inequality that in the support of $\psi_2$, the derivative of the phase function satisfies $|f'_r(u)|=|3u^2-(r+\alpha)|>\frac16(u^2+(r+\alpha))$. Then, we obtain integrating by parts that
\begin{displaymath}
\begin{split}
A_{\beta}^2(r)&=\int_1^{+\infty}\frac{1}{if_r'(u)}\frac{d}{du}\big(e^{if_r(u)} \big)
\overline{h(ru)}u^{\beta+1}\psi_2(u)du \\
&=i\int_1^{+\infty}e^{if_r(u)}\frac{d}{du}\Big(
\frac{\overline{h(ru)}u^{\beta+1}\psi_2(u)}{f_r'(u)}\Big)
du.
\end{split}
\end{displaymath}
Therefore, it follows from \eqref{lemma maximal2.4} that
\begin{equation} \label{Strichartz-prop8}
|A_{\beta}^2(r)| \lesssim \int_1^{+\infty}\Big(\frac{ru^{\beta+1}}
{(u^2+(r+\alpha))(1+ru)^{\frac32}}
+\frac{u^{\beta+2}}{(u^2+(r+\alpha))^2(1+ru)^{\frac12}} \Big)du \lesssim 1.
\end{equation}
Note that the implicit constant does not depend on $r$, $\epsilon$ or $t$.

Next we turn to $A_{\beta}^1$. In the support of $\psi_1$, we have $u \sim (r+\alpha)^{\frac12}$,  so that $|f_r''(u)|=6u \gtrsim (r+\alpha)^{\frac12}$. Thus, it follows from Van der Corput's lemma that
\begin{equation} \label{Strichartz-prop9}
\begin{split}
|A_{\beta}^1(r)| &\lesssim \frac{1}{(r+\alpha)^{\frac14}}\Big(\|h(r\cdot)(\cdot)^{\beta+1}\psi_1\|_{L^{\infty}}
+\|\frac{d}{du}\big(h(r\cdot)(\cdot)^{\beta+1}\psi_1 \big)\|_{L^1} \Big)\\
& \lesssim
\frac{(r+\alpha)^{\frac12(1+\beta)}}{(r+\alpha)^{\frac14}(1+r(r+\alpha)^{\frac12})^{\frac12}}
\lesssim \max\{1,\alpha^{\frac{\beta}{2}+\frac14}\}.
\end{split}
\end{equation}

Finally, we deduce formula \eqref{Strichartz-prop6} combining \eqref{Strichartz-prop7}--\eqref{Strichartz-prop9}, which concludes the proof of Proposition \ref{Strichartz-prop}.
\end{proof}

We are now in position to give a proof of Theorem
\ref{Strichartz-theo}.

\begin{proof}[Proof of Theorem \ref{Strichartz-theo}] Fix $0 \le
\alpha <\frac12$, $\beta=2\alpha \in [0,1)$,
$\kappa=\kappa_{\alpha}$, $q=q_{\alpha}$ and $q'$ its conjugate
exponent, \text{i.e.} $\frac1q+\frac1{q'}=1$. We first observe by
using a P. Tomas' duality argument (see for example \cite{LP}) that
estimate \eqref{Strichartz-theo.1} is equivalent to
\begin{equation} \label{Strichartz-theo.2}
\big\|\int_{-\infty}^{+\infty}D^{\beta}_xU_{\epsilon}(t-t')g(\cdot,
t')dt'\big\|_{L^{q}_tL^{\infty}_x} \lesssim
\epsilon^{-2\kappa}\|g\|_{L^{q'}_tL^1_x},
\end{equation}
for all $g \in L^{q'}(\mathbb R; L^1(\mathbb R^2))$.

Next we prove estimate \eqref{Strichartz-theo.2}. It follows from
Minkowski's inequality and estimate \eqref{Strichartz-prop1} that
\begin{equation} \label{Strichartz-theo.3}
\big\|\int_{-\infty}^{+\infty}D^{\beta}_xU_{\epsilon}(t-t')g(\cdot,
t')dt'\big\|_{L^{\infty}_x} \lesssim k_{\beta, \epsilon} \ast
\|g(\cdot)\|_{L^1_x}(t):=J_{\beta,\epsilon}(t),
\end{equation}
where $k_{\beta, \epsilon}$ is defined in \eqref{Strichartz-prop1b}.
We will denote $\varphi(t)=\|g(\cdot,t)\|_{L^1_x}$. Then we divide
the kernel $k_{\beta, \epsilon}$ in two parts, $k_{\beta,
\epsilon}=k_{\beta, \epsilon}^0+k_{\beta, \epsilon}^{\infty}$, where
\begin{displaymath}
k_{\beta, \epsilon}^0(t)=(\epsilon
t)^{-\frac{2+\beta}3}\chi_{\{|t|\le \theta \epsilon^{\frac12}\}}
\quad \text{and} \quad k_{\beta,
\epsilon}^{\infty}(t)=\epsilon^{-\frac34-\frac{\beta}2}t^{-\frac12}
\chi_{\{|t|\ge \theta \epsilon^{\frac12}\}},
\end{displaymath}
so that
\begin{equation} \label{Strichartz-theo.4}
J_{\beta,\epsilon}(t)=J_{\beta,\epsilon}^0(t)+J_{\beta,\epsilon}^{\infty}(t),
\end{equation}
where $J_{\beta,\epsilon}^0$, respectively
$J_{\beta,\epsilon}^{\infty}$, is the convolution operator
associated to the kernel $k_{\beta, \epsilon}^0$, respectively
$k_{\beta, \epsilon}^{\infty}$.

To estimate $J_{\beta,\epsilon}^0$, we observe that
\begin{displaymath}
\int_{\mathbb R}k_{\beta, \epsilon}^0(t)dt =2\int_0^{\theta
\epsilon^{\frac12}}(\epsilon
t)^{-\frac{2+\beta}3}dt=c\epsilon^{\frac{1+\beta}2}\theta^{\frac{1-\beta}3},
\end{displaymath}
since $0 \le \beta <1$. Therefore, it follows from Theorem 2 in Chapter III of
\cite{St} that
\begin{equation} \label{Strichartz-theo.5}
|J_{\beta,\epsilon}^0(t)| \lesssim
\epsilon^{\frac{1+\beta}2}\theta^{\frac{1-\beta}3}\mathcal{M}\varphi(t),
\end{equation}
where $\mathcal{M}$ denotes the Hardy-Littlewood maximal function.
On the other hand, Young's theorem implies that
\begin{equation} \label{Strichartz-theo.6}
|J_{\beta,\epsilon}^{\infty}(t)| \le
\left(\int_{|t|\ge\theta\epsilon^{\frac12}}t^{-\frac{q}2}dt
\right)^{\frac1q}\|\varphi\|_{L^{q'}}=c\epsilon^{-(1+\frac{\beta}2-\frac1{2q})}\theta^{\frac1q-\frac12}
\|\varphi\|_{L^{q'}},
\end{equation}
since $q>2$. Observe that
$1+\frac{\beta}2-\frac1{2q}>\frac{1+\beta}2$. Thus, we deduce
gathering \eqref{Strichartz-theo.4}--\eqref{Strichartz-theo.6} that
\begin{equation} \label{Strichartz-theo.6}
|J_{\beta,\epsilon}(t)| \lesssim
\epsilon^{-(1+\frac{\beta}2-\frac1{2q})}\big(\theta^{\frac1q-\frac12}
\|\varphi\|_{L^{q'}}+\theta^{\frac{1-\beta}3}\mathcal{M}\varphi(t)\big).
\end{equation}
Now, we choose $\theta=\theta(t)$ to minimize the term on the
right-hand side of \eqref{Strichartz-theo.6}, which is to say
\begin{displaymath}
\theta(t)^{\frac56-\frac{\beta}3-\frac1q}=\|\varphi\|_{L^{q'}}\mathcal{M}\varphi(t)^{-1}.
\end{displaymath}
This implies together with \eqref{Strichartz-theo.6} that
\begin{equation} \label{Strichartz-theo.7}
|J_{\beta,\epsilon}(t)| \lesssim
\epsilon^{-(1+\frac{\beta}2-\frac1{2q})}
\|\varphi\|_{L^{q'}}^{\gamma}\mathcal{M}\varphi(t)^{1-\gamma},
\end{equation}
where $\gamma=\frac{(1-\beta)2q}{5q-2\beta q-6}$. Then it follows
that
\begin{equation} \label{Strichartz-theo.8}
\|J_{\beta,\epsilon}\|_{L^q} \lesssim
\epsilon^{-(1+\frac{\beta}2-\frac1{2q})}
\|\varphi\|_{L^{q'}}^{\gamma}\|\mathcal{M}\varphi\|_{L^{(1-\gamma)q}}^{1-\gamma}.
\end{equation}
Moreover, observe that
\begin{displaymath}
(1-\gamma)q=q' \quad \Leftrightarrow \quad 3q^2-2(7-\beta)q+12=0.
\end{displaymath}
Finally, we conclude from the fact the maximal function is
continuous in $L^{q'}$ that
\begin{equation} \label{Strichartz-theo.8}
\|J_{\beta,\epsilon}\|_{L^q} \lesssim
\epsilon^{-(1+\frac{\beta}2-\frac1{2q})} \|\varphi\|_{L^{q'}},
\end{equation}
which concludes the proof of Theorem \ref{Strichartz-theo}.
\end{proof}

\subsection{The nonlinear Cauchy problem}
The objective of this subsection is to prove the following result.
\begin{theorem} \label{NCP}
Let $s>\frac32$ and $0 < \epsilon \le 1$ be fixed. Then for any
$(w_1^0,w_2^0) \in H^s(\mathbb R^2)\times H^s(\mathbb R^2)$, there
exist a positive time $T=T(\|(w_1^0,w_2^0)\|_{H^s\times H^s})$, a
space $X^s_{T_{\epsilon}}$ such that
\begin{equation} \label{NCP1}
X^s_{T_{\epsilon}} \hookrightarrow C\big([0,T_{\epsilon}] \ ; \ H^s(\mathbb R^2)\times H^s(\mathbb R^2)\big),
\end{equation} and a unique solution $(w_1,w_2)$ to \eqref{KdV type3} in $X^s_{T_{\epsilon}}$ satisfying $(w_1,w_2)_{|_{t=0}}=(w_1^0,w_2^0)$, where $T_{\epsilon} = T\epsilon^{-\frac12}$.

Moreover, for any $T' \in (0,T_{\epsilon})$, there exists a neighborhood $\Omega^s$ of $(w_1^0,w_2^0)$ in $H^s(\mathbb R^2)\times H^s(\mathbb R^2)$ such that the flow map associated to \eqref{KdV type3} is smooth from $\Omega^s$ into $X^s_{T'}$.
\end{theorem}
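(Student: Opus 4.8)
The plan is to recast the diagonalized system \eqref{KdV type3} in its Duhamel (integral) form and to solve it by a contraction‐mapping argument in a complete space $X^s_{T_\epsilon}$ whose norm bundles together the four linear estimates established above. With the groups $U_\epsilon^\pm$ of \eqref{group}, the integral formulation reads
\begin{equation*}
w_1(t)=U_\epsilon^+(t)w_1^0-\int_0^tU_\epsilon^+(t-t')\,\epsilon\,(I+II)(w_1,w_2)(t')\,dt',
\end{equation*}
and symmetrically for $w_2$ with $U_\epsilon^-$ and $(-I+II)$. Two structural facts drive everything: each of $I$ and $II$ is schematically of the form $D(\,\cdot\,)$, i.e.\ carries exactly one spatial derivative acting on a quadratic expression in $w=(w_1,w_2)$ built from the zeroth–order operators $R_1,R_2$ and $\sqrt{-\Delta}\,\Lambda^{-1}$; and the nonlinearity comes with the prefactor $\epsilon$ inherited from \eqref{KdV type}. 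The single derivative is precisely the order gained by the local smoothing estimate of Theorem \ref{locsm}, so the estimates will be \emph{critical}; the prefactor $\epsilon$ is what converts the natural $O(1)$ lifespan into the claimed $O(\epsilon^{-1/2})$ one. (Theorem \ref{theoKdV-KdV} then follows from Theorem \ref{NCP} by undoing the invertible diagonalization $P$, using that $w_0\equiv0\iff\curl\mathbf v=0$.)

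I would take $X^s_{T_\epsilon}$ to be the space of $w$ for which
\[
\|w\|_{L^\infty_{T_\epsilon}H^s}
+\mu_1\sup_{\alpha}\big\|D^{s+1}P_{>\epsilon^{-1/2}}w\big\|_{L^2_{Q_\alpha\times[0,T_\epsilon]}}
+\mu_2\Big(\sum_{\alpha}\sup_{[0,T_\epsilon]\times Q_\alpha}\!\big|\Lambda^s w\big|^2\Big)^{1/2}
+\mu_3\big\|D^{a}w\big\|_{L^{q_a}_{T_\epsilon}L^\infty_x}
\]
is finite, where $0\le a<\tfrac12$ and the $\epsilon$–dependent weights $\mu_i=\mu_i(\epsilon,T_\epsilon)$ are chosen so that, by Theorems \ref{locsm}, \ref{maximal} and \ref{Strichartz-theo}, the free evolution obeys $\|U_\epsilon^\pm w^0\|_{X^s_{T_\epsilon}}\lesssim\|w^0\|_{H^s}$ uniformly in $\epsilon\in(0,1]$. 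Here one uses $\epsilon T_\epsilon=T\epsilon^{1/2}\le1$, required by Theorem \ref{maximal}, which holds for $T$ small. From the homogeneous estimates I would then derive, by $TT^*$ duality (and the Christ–Kiselev lemma for the Strichartz piece, legitimate since $q_a>2$), the inhomogeneous counterparts: a single–ended dual smoothing bound $\big\|D\int_0^tU_\epsilon^\pm(t-t')F\big\|_{L^\infty_TL^2_x}\lesssim\epsilon^{-1/2}\sum_\alpha\|F\|_{L^2_{Q_\alpha\times[0,T]}}$, its double–ended version gaining up to two derivatives with a factor $\epsilon^{-1}$, together with dual Strichartz and inhomogeneous maximal–function bounds.

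The heart is to bound $\big\|\int_0^tU_\epsilon^\pm(t-t')\,\epsilon\,\mathcal N\big\|_{X^s_{T_\epsilon}}$ by a quadratic expression in $\|w\|_{X^s_{T_\epsilon}}$ with a small uniform constant. I would split each frequency into low ($|\xi|\le\epsilon^{-1/2}$) and high ($|\xi|>\epsilon^{-1/2}$) parts. On low frequencies there is no smoothing, but the derivative is harmless: $\|DP_{\le\epsilon^{-1/2}}(\cdot)\|\lesssim\epsilon^{-1/2}\|\cdot\|$, so this part is absorbed by the energy inequality alone, and since $H^s(\R^2)$ is an algebra for $s>1$ one gets a contribution $\lesssim\epsilon\cdot\epsilon^{-1/2}\cdot T_\epsilon\cdot\|w\|_{L^\infty_{T_\epsilon}H^s}^2=T\,\|w\|_{L^\infty_{T_\epsilon}H^s}^2$, uniform in $\epsilon$. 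On high frequencies the derivative in $I$ (and the outer $\sqrt{-\Delta}$ in $II$) is absorbed by the dual smoothing estimates, reducing matters to the undifferentiated quadratic expression measured in $\sum_\alpha\|\cdot\|_{L^2_{Q_\alpha\times[0,T_\epsilon]}}$. I would expand $\Lambda^s$ by the Kato–Ponce fractional Leibniz rule and apply Hölder both in the cube index (via $\ell^1=\ell^2\cdot\ell^2$) and in space–time, placing the factor carrying $\Lambda^s$ in $L^2_xL^2_{T_\epsilon}\le T_\epsilon^{1/2}L^\infty_{T_\epsilon}L^2_x$ (the energy norm) and the other factor in the maximal norm $\ell^2_\alpha L^\infty_{T_\epsilon}L^\infty_{Q_\alpha}$, or alternatively pairing an $L^{q_a'}_{T_\epsilon}$ factor against the Strichartz norm. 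Boundedness of $R_1,R_2$ and $\sqrt{-\Delta}\,\Lambda^{-1}$ on these norms is used freely. Collecting the $\epsilon$–powers from the prefactor $\epsilon$, the smoothing constants and the weights $\mu_i$, and invoking $\epsilon T_\epsilon\le1$, the entire nonlinear contribution is $\le C(T^{1/2}+T^{\theta})\|w\|_{X^s_{T_\epsilon}}^2$ for some $\theta>0$, uniformly in $\epsilon\in(0,1]$.

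Choosing the ball radius proportional to $\|(w_1^0,w_2^0)\|_{H^s}$ and $T$ small (independent of $\epsilon$), this shows the Duhamel map sends the ball into itself; the same quadratic structure yields a contraction on differences, hence a unique fixed point in $X^s_{T_\epsilon}$, which is the solution on $[0,T_\epsilon]$ with $T_\epsilon=T\epsilon^{-1/2}$. The embedding \eqref{NCP1} and the uniform-in-$\epsilon$ bound of Remark \ref{uniform} are read off from the norm, and smoothness of the flow map on $\Omega^s$ follows in the standard way by the implicit function theorem, since the map depends analytically on the data through a quadratic nonlinearity. The main obstacle I anticipate is exactly the criticality noted above: the nonlinearity loses precisely one derivative, which local smoothing barely recovers and only at high frequencies, leaving no slack. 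Keeping the non-differentiated factor in an $L^\infty$-type norm forces the maximal-function estimate and hence the threshold $s>\tfrac{3n}{4}=\tfrac32$ in dimension $n=2$; and the simultaneous requirement to track every power of $\epsilon$, so that the lifespan is genuinely $\epsilon^{-1/2}$ and the solution is uniformly bounded, makes the bookkeeping — rather than any single inequality — the delicate part.
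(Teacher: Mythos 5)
Your architecture coincides with the paper's: the Duhamel formulation of \eqref{KdV type3} solved by contraction in a space combining the energy norm with $\epsilon$-weighted local smoothing, maximal-function and Strichartz norms; the frequency splitting at $|\xi|=\epsilon^{-\frac12}$ for the worst term $w_jD^1_xw_k$, with the low frequencies absorbed through $\epsilon\cdot\epsilon^{-\frac12}\cdot T_\epsilon=T$ and the high frequencies through the smoothing norm paired against the maximal norm by Cauchy--Schwarz over the cubes $Q_\alpha$; the fractional Leibniz rule plus Strichartz for the remaining terms; and the constraint $\epsilon T_\epsilon\le1$ imposed by Theorem \ref{maximal}. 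This is precisely the scheme of \eqref{NCP4}--\eqref{NCP13a}, down to the final choice of a small $T$ depending on the data. However, two steps fail as written.

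First, your maximal-function component is placed on $\Lambda^s w$. Theorem \ref{maximal} controls $\big(\sum_\alpha\sup_{|t|\le T}\sup_{Q_\alpha}|U^\pm_\epsilon(t)u_0|^2\big)^{1/2}$ only at the cost of $\|u_0\|_{H^{3/2+}}$, so applied to $\Lambda^sU^\pm_\epsilon(t)w^0$ it costs $\|w^0\|_{H^{s+3/2+}}$: the free-evolution bound $\|U^\pm_\epsilon w^0\|_{X^s_{T_\epsilon}}\lesssim\|w^0\|_{H^s}$ that your scheme requires is then false, and no choice of the weight $\mu_2$ can repair a derivative loss (weights tune powers of $\epsilon$, not derivative counts). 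The paper's seminorm $\lambda^5_T$ carries \emph{no} derivatives --- it measures only $w$ and $R_lw$, with weight $\epsilon^{\frac18}$ --- and that is all your own pairing actually uses, since in the product $w_jP_{>\epsilon^{-1/2}}\partial_{x_l}D^{1+\gamma}_xw_k$ of \eqref{NCP13} the smoothing norm carries all $s+1$ derivatives; so this is a repairable internal inconsistency, but as stated the space is too small for the linear flow. Second, and more substantively, you assert that boundedness of $R_1,R_2$ ``on these norms is used freely''. The Riesz transforms are \emph{not} bounded on $L^\infty(\mathbb R^2)$, hence cannot be pulled out of the Strichartz norms $L^{q_\alpha}_TL^\infty_x$ or of the maximal norms $\ell^2_\alpha L^\infty_{Q_\alpha\times[0,T]}$; yet after diagonalization every quadratic term in $I$ and $II$ contains factors $R_lw_j$ that must be placed in exactly those $L^\infty_x$-based norms. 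The paper's fix is structural: each of $\lambda^2_T$ through $\lambda^5_T$ is defined to contain the Riesz-composed copies $R_lf$ alongside $f$, and the linear theorems are applied directly to the data $R_lw^0$ (legitimate because $R_l$ commutes with $U^\pm_\epsilon$ and is bounded on $L^2$ and $H^s$). Without this device your H\"older pairings do not close. A lesser point: your detour through $TT^*$ duality, Christ--Kiselev and a ``double-ended'' retarded smoothing estimate gaining two derivatives is both unnecessary and, at that endpoint, unavailable (Christ--Kiselev requires the time exponent on the left to exceed strictly the dual exponent on the right, which fails for $L^2_T$ against $L^2_T$); the paper sidesteps all retarded estimates by Minkowski's inequality, measuring the nonlinearity in $L^1_TH^s_x$ as in \eqref{NCP4}, and your nonlinear estimates need nothing more than that.
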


First, we list some well-known properties of the Riesz transforms.
\begin{proposition} \label{Riesz}
It holds that
\begin{equation} \label{Riesz1}
\|R_jf\|_{L^p} \le C\|f\|_{L^p},
\end{equation}
for all $1<p<\infty$ and $j \in \{1,2\}$, and
\begin{equation} \label{Riesz2}
D^1_x=R_1\partial_{x_1}+R_2\partial_{x_2}.
\end{equation}
\end{proposition}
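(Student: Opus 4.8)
The plan is to verify both assertions on the Fourier side, treating the algebraic identity \eqref{Riesz2} by a direct symbol computation and the boundedness \eqref{Riesz1} by the classical Calder\'on--Zygmund theory of singular integrals.

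For \eqref{Riesz2} I would simply compare Fourier multipliers. By definition $R_j$ acts as multiplication by $-i\xi_j/|\xi|$ and $\partial_{x_j}$ as multiplication by $i\xi_j$, so the composition $R_j\partial_{x_j}$ has symbol $(-i\xi_j/|\xi|)(i\xi_j)=\xi_j^2/|\xi|$. Summing over $j\in\{1,2\}$ gives the symbol $(\xi_1^2+\xi_2^2)/|\xi|=|\xi|$, which is exactly the symbol of $D^1_x=\sqrt{-\Delta}$. Hence $R_1\partial_{x_1}+R_2\partial_{x_2}$ and $D^1_x$ coincide as Fourier multipliers, first on the Schwartz class and then on the relevant Sobolev spaces by density.

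For \eqref{Riesz1} I would recall that $R_j$ is the singular integral operator whose convolution kernel in $\mathbb R^2$ is $c\,x_j/|x|^{3}$. This kernel satisfies the size bound $|K(x)|\lesssim|x|^{-2}$ together with the H\"ormander smoothness condition, while its symbol $-i\xi_j/|\xi|$ is bounded, so that $R_j$ is bounded on $L^2$ by Plancherel. The Calder\'on--Zygmund theorem then produces the weak $(1,1)$ estimate, and Marcinkiewicz interpolation gives boundedness for $1<p\le 2$, with the range $2\le p<\infty$ following by duality since $R_j^*=-R_j$. Alternatively, since $m(\xi)=-i\xi_j/|\xi|$ satisfies the Mikhlin--H\"ormander condition $|\partial^\alpha m(\xi)|\lesssim|\xi|^{-|\alpha|}$ away from the origin, one may invoke the Mikhlin multiplier theorem directly.

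There is no genuine obstacle here: identity \eqref{Riesz2} is an elementary one-line symbol check, and \eqref{Riesz1} is the textbook $L^p$-continuity of the Riesz transforms. In practice I would record the Fourier computation for \eqref{Riesz2} and simply cite a standard reference, for instance \cite{St}, for the boundedness \eqref{Riesz1}.
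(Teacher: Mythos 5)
Your proposal is correct: the one-line symbol computation for \eqref{Riesz2} (the multiplier of $R_j\partial_{x_j}$ is $\xi_j^2/|\xi|$, summing to $|\xi|$) and the Calder\'on--Zygmund or Mikhlin--H\"ormander argument for \eqref{Riesz1} are the standard proofs, and both are carried out without error. The paper itself states Proposition \ref{Riesz} with no proof at all, introducing it merely as a list of well-known properties of the Riesz transforms, so your stated plan of recording the Fourier check and citing a classical reference such as \cite{St} matches the paper's treatment exactly.
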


The following Leibniz' rule for fractional derivative, derived by Kenig, Ponce and Vega in Theorem A.12 of \cite{KPV2}, will also be needed.
\begin{lemma} \label{FLR}
Let $0<\gamma<1$ and $1<p<\infty$. Then
\begin{equation} \label{FLR1}
\big\| D^{\gamma}_x(fg)-fD^{\gamma}_xg-gD^{\gamma}_xf\big\|_{L^p} \lesssim \|g\|_{L^{\infty}} \|D^{\gamma}_xf\|_{L^p},
\end{equation}
for all $f, \ g: \mathbb R^n \rightarrow \mathbb C$.
\end{lemma}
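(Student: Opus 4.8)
The plan is to reduce the trilinear expression to a bilinear operator with an explicit kernel, make the cancellation manifest, and then estimate that operator by a Littlewood--Paley/paraproduct decomposition. First I would use the singular integral representation of the fractional Laplacian: for $0<\gamma<1$ one has $D^\gamma_x h(x)=c_{n,\gamma}\,\mathrm{p.v.}\int_{\mathbb R^n}\frac{h(x)-h(y)}{|x-y|^{n+\gamma}}\,dy$. Substituting $h=fg$, then $h=g$ and $h=f$, and combining the three integrals, the pointwise numerator collapses by the elementary identity $[f(x)g(x)-f(y)g(y)]-f(x)[g(x)-g(y)]-g(x)[f(x)-f(y)]=-(f(x)-f(y))(g(x)-g(y))$, so that
\begin{equation*}
D^\gamma_x(fg)(x)-fD^\gamma_xg(x)-gD^\gamma_xf(x)=-c_{n,\gamma}\int_{\mathbb R^n}\frac{(f(x)-f(y))(g(x)-g(y))}{|x-y|^{n+\gamma}}\,dy .
\end{equation*}
The quadratic numerator is $O(|x-y|^2)$ near the diagonal, so the integral converges absolutely and defines a symmetric bilinear form $B(f,g)$; equivalently $B$ is the bilinear Fourier multiplier with symbol $m(\xi,\eta)=|\xi+\eta|^\gamma-|\xi|^\gamma-|\eta|^\gamma$. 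The key feature is that $g$ now enters only through the difference $g(x)-g(y)$, which costs merely $\|g\|_{L^\infty}$, while the $\gamma$ derivatives are to be charged entirely to $f$.

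Next I would run a Bony paraproduct decomposition, splitting $B(f,g)=\sum_k B(P_kf,S_{k-2}g)+\sum_k B(S_{k-2}f,P_kg)+\sum_{|k-l|\le 2}B(P_kf,P_lg)$ into the low--high, high--low and resonant (high--high) pieces, where $P_k$, $S_k$ are the usual Littlewood--Paley projections.

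For the two non-resonant pieces, together with the high-output part of the resonant piece, every output is frequency-localized near a single dyadic scale $2^k$, so the summation in $k$ is governed by almost-orthogonality and the square function. On each block I would estimate the factors separately: the factor carrying the derivative is measured by $\|P_kD^\gamma_xf\|_{L^p}\sim 2^{k\gamma}\|P_kf\|_{L^p}$, while the crucial observation is that $g$ may be kept in $L^\infty$, since Bernstein's inequality gives $\|D^\gamma_xP_kg\|_{L^\infty}\lesssim 2^{k\gamma}\|g\|_{L^\infty}$ and no $L^p$ derivative of $g$ is ever needed. Summing the resulting block bounds $\lesssim\|g\|_{L^\infty}\|P_kD^\gamma_xf\|_{L^p}$ through the Fefferman--Stein vector-valued maximal inequality and the characterization $\|D^\gamma_xf\|_{L^p}\sim\|(\sum_k|P_kD^\gamma_xf|^2)^{1/2}\|_{L^p}$ yields the desired bound $\lesssim\|g\|_{L^\infty}\|D^\gamma_xf\|_{L^p}$ for these terms.

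The hard part is the resonant piece with low output frequency, i.e. $\sum_{|k-l|\le2}P_j B(P_kf,P_lg)$ with $2^j\ll 2^k$. Here a naive estimate that replaces $|g(x)-g(y)|$ by $2\|g\|_{L^\infty}$ produces, for each block, the bound $\|g\|_{L^\infty}2^{k\gamma}\mathcal M(P_kf)$, whose sum over scales is an $\ell^1_k$ sum of maximal functions and genuinely diverges; moreover the two large contributions $P_kf\,D^\gamma_xP_kg$ and $P_kg\,D^\gamma_xP_kf$ cannot be split apart and estimated term by term, so the oscillation across scales must be retained. I would therefore treat this piece as a bilinear multiplier and invoke the Coifman--Meyer multilinear multiplier theorem (in its $L^p\times L^\infty\to L^p$ endpoint form). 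The genuine obstacle is that the symbol $m$ fails the Coifman--Meyer hypotheses not only at the origin but along the whole resonant plane $\xi+\eta=0$, where $|\xi+\eta|^\gamma$ loses smoothness and which lies far from the origin of $(\xi,\eta)$-space; this is precisely the analytic source of the divergence above. I would isolate a conical neighbourhood of $\xi+\eta=0$, in which $|\xi+\eta|^\gamma$ is a positive-order factor whose associated kernel is of Calderón--Zygmund type with $\epsilon$-independent constants, and apply Coifman--Meyer to the complementary, genuinely smooth region. Collecting the four regimes then gives \eqref{FLR1}.
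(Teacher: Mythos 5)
Your reduction is correct as far as it goes: the pointwise identity collapsing the three terms to $B(f,g)(x)=-c_{n,\gamma}\int\frac{(f(x)-f(y))(g(x)-g(y))}{|x-y|^{n+\gamma}}dy$ is right, and the overall paraproduct architecture is a legitimate route (note the paper itself offers no proof to compare with: it simply cites Theorem A.12 of Kenig--Ponce--Vega). Your high--low piece and the high-output resonant piece do close as you describe. The genuine gap is the \emph{low--high} piece ($f$ at low frequency, $g$ at frequency $2^k$), which does not fit your announced block recipe. There the symbol splits as $-|\xi|^\gamma+\big(|\xi+\eta|^\gamma-|\eta|^\gamma\big)$; the second term gains $2^{-(k-j)(1-\gamma)}$ by the mean value theorem and Schur-sums into square functions, but the first term is exactly the paraproduct $\pi(D^\gamma_x f,g)=\sum_k S_{k-2}(D^\gamma_xf)\,P_kg$, in which the derivative sits on the \emph{low}-frequency factor: there is no $P_kD^\gamma_xf$ to square-sum, and the naive pointwise bound yields $\mathcal{M}(D^\gamma_xf)\cdot\big(\sum_k|P_kg|^2\big)^{1/2}$, whose second factor is \emph{not} controlled by $\|g\|_{L^\infty}$ (the square function of a bounded function need not be bounded). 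Closing this piece requires the genuinely deeper fact that paraproducts with $L^\infty$ (indeed BMO) symbol are $L^p$-bounded, proved via a Carleson-measure embedding --- or, alternatively, your own kernel representation plus the mean value theorem applied to $S_{k-2}f$, which gives $|B(S_{k-2}f,P_kg)|\lesssim\|g\|_{L^\infty}\sum_{j<k}2^{-(k-j)(1-\gamma)}\,2^{j\gamma}\mathcal{M}(P_jf)$ and hence Schur decay. Neither ingredient appears in your sketch; Bernstein plus Fefferman--Stein alone cannot finish it.

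Your diagnosis of the resonant low-output piece (failure along $\xi+\eta=0$, impossibility of term-by-term estimation) is correct, but the proposed cure is misdescribed in three respects. First, the kernel of $|\xi+\eta|^\gamma$ is $\sim|x|^{-n-\gamma}$, which is \emph{not} of Calder\'on--Zygmund type; what actually saves the conical region is that this factor has positive order $\gamma$ \emph{in the small output frequency}: after pulling $|\xi|^\gamma$ onto $f$, a block with inputs at $2^k$ and output at $2^j\ll 2^k$ carries the geometric gain $2^{-(k-j)\gamma}$ (this is precisely where $\gamma>0$ is used), which Schur-sums. Second, the dominant part $-(|\xi|^\gamma+|\eta|^\gamma)$ on the cone must still be handled: after factoring out $|\xi|^\gamma$, the remaining symbol does satisfy Mikhlin--Coifman--Meyer bounds because $|\xi|\sim|\eta|$ on the support of a smooth degree-zero cutoff, but you then need the $L^p\times L^\infty\to L^p$ endpoint of bilinear Calder\'on--Zygmund theory, which should be invoked explicitly. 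Third, ``apply Coifman--Meyer to the complementary, genuinely smooth region'' is false as stated: the complement of the cone also contains the planes $\xi=0$ and $\eta=0$, where $m(\xi,\eta)/|\xi|^\gamma$ violates the Coifman--Meyer bounds --- these are exactly the non-resonant pieces, including the problematic low--high one above, so the regions must be kept separate. (Also, the remark about ``$\epsilon$-independent constants'' is vacuous here: no $\epsilon$ occurs in the lemma.) With these repairs the argument can be completed, but as written two of its steps fail.
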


\begin{proof}[Proof of Theorem \ref{NCP}]
We will treat only the most difficult case when $\frac32<s < 2$ and we define $\gamma=s-1 \in (\frac12,1)$. The integral system associated to \eqref{KdV type3} with initial data $(w_1^0,w_2^0)$ can be written as
\begin{equation} \label{NCP2}
\begin{cases}
w_1=\mathcal{F}^+(w_1,w_2):=
U_{\epsilon}^+(t)w_1^0+\epsilon\int_0^t\,U_{\epsilon}^+(t-t')(I+II)(w_1,w_2)(t')dt'\\
w_2=\mathcal{F}^-(w_1,w_2):=
U_{\epsilon}^-(t)w_2^0+\epsilon\int_0^t\,U_{\epsilon}^-(t-t')(-I+II)(w_1,w_2)(t')dt'
\end{cases}
\end{equation}
where the nonlinearities $I$ and $II$ appearing on the right-hand side of \eqref{NCP2} are defined in \eqref{I} and \eqref{II}.

Next, for $0<T\lesssim \epsilon^{-\frac12}$, we fix $\alpha=1-\gamma
\in (0,\frac12)$, $q_{\alpha}$ and $\kappa_{\alpha}$ defined in
Theorem \ref{Strichartz-theo} and consider the following semi-norms
\begin{displaymath} \label{NCP3}
\begin{split}
\lambda_T^1(f)=&\sup_{0\le t\le T} \|f(t)\|_{H^s_x},\\
\lambda_T^2(f)=& \ \epsilon^{\frac37+}\sum_{  |\beta| \le
1}\|\partial_x^{\beta}f\|_{L^{\frac72+}_TL^{\infty}_x}
+\epsilon^{\frac37+}\|D_x^1f\|_{L^{\frac72+}_TL^{\infty}_x}+
\epsilon^{\frac37+}\sum_{l=1,2}\sum_{ |\beta| \le
1}\|\partial_x^{\beta}R_lf\|_{L^{\frac72+}_TL^{\infty}_x}
\\ &+\epsilon^{\frac37+}\sum_{l=1,2}\|D_x^1R_lf\|_{L^{\frac72+}_TL^{\infty}_x},\\
\lambda_T^3(f)=& \ \epsilon^{\kappa_{\alpha}} \sum_{
|\beta|=1}\|D_x^1\partial_x^{\beta}f\|_{L^{q_{\alpha}}_TL^{\infty}_x}
+\epsilon^{\kappa_{\alpha}}\sum_{l=1,2}
\sum_{  |\beta|=1}\|D_x^1\partial_x^{\beta}R_lf\|_{L^{q_{\alpha}}_TL^{\infty}_x}, \\
\lambda_T^4(f)=& \ \epsilon^{\frac12}\sum_{ |\beta| = 1}\sup_{\alpha \in \mathbb Z^2}\|P_{>\epsilon^{-\frac12}}\partial_x^{\beta}D^{1+\gamma}_xf\|_{L^2_{\mathcal{Q}_{\alpha}\times[0,T]}}
\\ & \quad+\epsilon^{\frac12}\sum_{l=1,2}\sum_{ |\beta| = 1}\sup_{\alpha \in \mathbb Z^2}\|P_{>\epsilon^{-\frac12}}\partial_x^{\beta}D^{1+\gamma}_xR_lf\|_{L^2_{\mathcal{Q}_{\alpha}\times[0,T]}},\\
\lambda_T^5(f)=& \ \epsilon^{\frac18}
\Big( \sum_{\alpha \in \mathbb Z^2} \|f\|_{L^{\infty}_{\mathcal{Q}_{\alpha}\times[0,T]}}^2\Big)^{\frac12}
+\epsilon^{\frac18}\sum_{l=1,2}\Big( \sum_{\alpha \in \mathbb Z^2} \|R_lf\|_{L^{\infty}_{\mathcal{Q}_{\alpha}\times[0,T]}}^2\Big)^{\frac12},
\end{split}
\end{displaymath}
where $\big\{Q_{\alpha}\big\}_{\alpha \in \mathbb Z^2}$ denotes a family
of nonoverlapping cubes of unit size such that $\mathbb
R^2=\cup_{\alpha \in \mathbb Z^2}Q_{\alpha}$, as in the precedent subsection.
We also define the Banach space $X^2_T$ by
\begin{displaymath}
X^2_T=\big\{(w_1,w_2) \in C([0,T];H^s(\mathbb R^2)\times H^s(\mathbb R^2))
 \ | \  \|(w_1,w_2)\|_{X^2_T} <\infty \big\}.
\end{displaymath}
where
\begin{displaymath}
\|(w_1,w_2)\|_{X^2_T}=\sum_{j=1}^4\lambda_T^j
(w_1)+\sum_{j=1}^4\lambda_j^T(w_2).
\end{displaymath}
We will show that, for adequate $T$ and $a$, the map $(\mathcal{F}^+,\mathcal{F}^-)$ is a contraction in a closed ball $X_T^2(a)$ in $X_T^2$ of radius $a>0$ and centered at the origin.

Using the integral equation \eqref{NCP2}, Minkowski's integral
inequality, the linear estimates \eqref{locsm.1}, \eqref{maximal.1}
and \eqref{Strichartz-theo.1} with $\alpha=0$ and $\alpha=1-\gamma$,
and the fact that the Riesz transforms are unitary operators in
$H^s$, we deduce that
\begin{equation}\label{NCP4}
\begin{split}
\underset{j=1}{\overset{4}{\sum}}&
\lambda_T^j\big(\mathcal{F}^+(w_1,w_2)\big)+\underset{j=1}{\overset{4}{\sum}}
\lambda_T^j\big(\mathcal{F}^-(w_1,w_2)\big) \\ &
\lesssim \|(w_1^0,w_2^0)\|_{H^s\times H^s}+\epsilon\int\limits_0^T\big(
\|I(w_1,w_2)(t)\|_{H^s_x}+ \|II(w_1,w_2)(t))\|_{H^s_x}\big)dt.
\end{split}
\end{equation}

According to formulas \eqref{I} and \eqref{II}, the nonlinearities $I(w_1,w_2)$ and $II(w_1,w_2)$ contain terms of the three following types: $w_jD_x^1w_k$, $\partial_{x_l}w_jR_lw_k$ and $D^1_x(R_lw_jR_mw_k)$ for $j, \ k, \ l, \ m =1,2.$ For sake of simplicity, we only will present  the computations for a nonlinear term of the first kind, since the other ones can be handled by similar arguments. In other words, we have to estimate the term $\epsilon\int_0^T\|w_jD_x^1w_k\|_{H^s_x}dt$ as a function of the norms defined in \eqref{NCP3}. By the definition of the Sobolev space $H^s(\mathbb R^2)$, we have that
\begin{equation} \label{NCP5}
\epsilon\int_0^T\|w_jD_x^1w_k\|_{H^s_x}dt \lesssim \epsilon\int_0^T\|w_jD_x^1w_k\|_{L^2_x}dt+
\epsilon\int_0^T\|D_x^s\big(w_jD_x^1w_k\big)\|_{L^2_x}dt.
\end{equation}

First, we use H\"older's inequality and the Sobolev embedding
$H^s(\mathbb R^2) \hookrightarrow L^{\infty}(\mathbb R^2)$ to
estimate the first term on the right-hand side of \eqref{NCP5} as
\begin{equation} \label{NCP6}
\begin{split}
\epsilon\int_0^T\|w_jD_x^1w_k\|_{L^2_x}dt &\le \epsilon T
\|w_j\|_{L^{\infty}_TL^{\infty}_x} \|D_x^1w_k\|_{L^{\infty}_TL^2_x}\\
& \le \epsilon T\lambda_T^1(w_j)\lambda_T^1(w_k).
\end{split}
\end{equation}

To treat the second term appearing on the right-hand side of \eqref{NCP5}, we observe from \eqref{Riesz2} that $D_x^s=D_x^{\gamma}R_1\partial_{x_1}+D_x^{\gamma}R_2\partial_{x_2}$. Therefore, it follows from \eqref{Riesz1} that
\begin{equation} \label{NCP7}
\begin{split}
\epsilon&\int_0^T\|D_x^s\big(w_jD_x^1w_k\big)\|_{L^2_x}dt\\ &\le \epsilon\sum_{l=1,2}
\int_0^T\|D_x^{\gamma}\big(\partial_{x_l}w_jD_x^1w_k\big)\|_{L^2_x}dt
+\epsilon\sum_{l=1,2}\int_0^T
\|D_x^{\gamma}\big(w_j\partial_{x_l}D_x^1w_k\big)\|_{L^2_x}dt.
\end{split}
\end{equation}
Then, we deduce from the fractional Leibniz' rule \eqref{FLR1} that
\begin{equation} \label{NCP8}
\begin{split}
\epsilon\int_0^T\|D_x^{\gamma}\big(\partial_{x_l}w_jD_x^1w_k\big)\|_{L^2_x}dt
&\le \epsilon^{\frac47-}T^{\frac57+}
\epsilon^{\frac37+}\|\partial_{x_l}w_j\|_{L^{\frac72+}_TL^{\infty}_x}
\|D_x^{1+\gamma}w_k\|_{L^{\infty}_TL^2_x} \\ &\quad
+\epsilon^{\frac47-}T^{\frac57+}
\epsilon^{\frac37+}\|D_x^1w_k\|_{L^{\frac72+}_TL^{\infty}_x}
\|\partial_{x_l}D_x^{\gamma}w_j\|_{L^{\infty}_TL^2_x}\\ & \le
\epsilon^{\frac47-}T^{\frac57+}\big(\lambda_T^2(w_j)\lambda_T^1(w_k)
+\lambda_T^2(w_k)\lambda_T^1(w_j)\big).
\end{split}
\end{equation}

On the other hand, by using again Leibniz' rule and H\"older's inequality, we observe that
\begin{equation} \label{NCP9}
\begin{split}
\epsilon&\int_0^T
\|D_x^{\gamma}\big(w_j\partial_{x_l}D_x^1w_k\big)\|_{L^2_x}dt\\ &
\le \epsilon\int_0^T\|D_x^{\gamma}w_j\|_{L^2_x}\|\partial_{x_l}D_x^1w_k\|_{L^{\infty}_x}dt
+\epsilon\int_0^T
\|w_j\partial_{x_l}D_x^{1+\gamma}w_k\|_{L^2_x}dt.
\end{split}
\end{equation}
We deal with the first term on the right-hand side of \eqref{NCP9}.
We deduce by using H\"older's inequality in time that
\begin{equation} \label{NCP10}
\begin{split}
\epsilon\int_0^T\|D_x^{\gamma}&w_j\|_{L^2_x}\|\partial_{x_l}D_x^1w_k\|_{L^{\infty}_x}dt
\\ & \le \epsilon^{1-\kappa_{\alpha}}T^{\frac1{q_{\alpha}'}}\|D_x^{\gamma}w_j\|_{L^{\infty}_TL^2_x}
\epsilon^{\kappa_{\alpha}}\|\partial_{x_l}D_x^1w_k\|_{L^{q_{\alpha}}_TL^{\infty}_x}\\
& \le
\epsilon^{1-\kappa_{\alpha}}T^{\frac1{q_{\alpha}'}}\lambda_1^T(w_j)\lambda_3^T(w_k),
\end{split}
\end{equation}
where $q_{\alpha}'$ is the conjugate exponent to $q_{\alpha}$.

Finally, to estimate the second term on the right-hand side of \eqref{NCP9}, we observe that
\begin{equation} \label{NCP11}
\begin{split}
\epsilon&\int_0^T
\|w_j\partial_{x_l}D_x^{1+\gamma}w_k\|_{L^2_x}dt
\\&\le \epsilon \int_0^T
\|w_jP_{\le \epsilon^{-\frac12}}\partial_{x_l}D_x^{1+\gamma}w_k\|_{L^2_x}dt
+\int_0^T
\|w_jP_{> \epsilon^{-\frac12}}\partial_{x_l}D_x^{1+\gamma}w_k\|_{L^2_x}dt.
\end{split}
\end{equation}
On the one hand, we use the Sobolev embedding $H^s(\mathbb R^2)
\hookrightarrow L^{\infty}(\mathbb R^2)$, since $s>1$, to obtain
that
\begin{equation} \label{NCP12}
\begin{split}
\epsilon\int_0^T
\|w_jP_{\le \epsilon^{-\frac12}}\partial_{x_l}D_x^{1+\gamma}w_k\|_{L^2_x}dt
&\le \epsilon T
\|w_j\|_{L^{\infty}_{T,x}}\|P_{\le \epsilon^{-\frac12}}\partial_{x_l}D_x^{1+\gamma}w_k\|_{L^{\infty}_TL^2_x}
\\ & \le \epsilon T
\|w_j\|_{L^{\infty}_{T}H^s_x}\epsilon^{-\frac12}\|D_x^{1+\gamma}w_k\|_{L^{\infty}_TL^2_x}
\\ & \le
\epsilon^{\frac12}T\lambda_T^1(w_j)\lambda_T^1(w_k).
\end{split}
\end{equation}
On the other hand, we deduce from H\"older's inequality that
\begin{equation} \label{NCP13}
\begin{split}
\epsilon\int_0^T
\|w_jP_{> \epsilon^{-\frac12}}&\partial_{x_l}D_x^{1+\gamma}w_k\|_{L^2_x}dt
\\&\le \epsilon T^{\frac12}\Big( \sum_{\alpha \in \mathbb Z^2}\int_{\mathcal{Q}_{\alpha}\times[0,T]}|w_j
P_{> \epsilon^{-\frac12}}\partial_{x_l}D_x^{1+\gamma}w_k|^2dxdt\Big)^{\frac12}\\
&\le \epsilon T^{\frac12}\Big( \sum_{\alpha \in \mathbb Z^2} \|w_j\|_{L^{\infty}_{\mathcal{Q}_{\alpha}\times[0,T]}}^2\Big)^{\frac12}
\sup_{\alpha \in \mathbb Z^2}\|P_{> \epsilon^{-\frac12}}\partial_{x_l}D_x^{1+\gamma}w_k\|_{L^2_{\mathcal{Q}_{\alpha}\times[0,T]}}\\
& \le \epsilon^{\frac38}T^{\frac12}\lambda_T^5(w_j)\lambda_T^4(w_k).
\end{split}
\end{equation}

Thus, it follows gathering \eqref{NCP4}--\eqref{NCP11} that
\begin{equation} \label{NCP12a}
\begin{split}
\big\|\big(\mathcal{F}^+(w_1,w_2),&\mathcal{F}^-(w_1,w_2)\big)\big\|_{X^2_T}
\le c\|(w_1^0,w_2^0)\|_{H^2\times
H^2}\\&+c\big(\epsilon^{\frac47-}T^{\frac57+}+
\epsilon^{1-\kappa_{\alpha}}T^{\frac1{q_{\alpha}'}}+\epsilon^{\frac12}T+
\epsilon^{\frac38}T^{\frac12}\big)\|(w_1,w_2)\|_{X^2_T}^2,
\end{split}
\end{equation}
and similarly,
\begin{equation} \label{NCP13a}
\begin{split}
\big\|\big(\mathcal{F}^+&(w_1,w_2),\mathcal{F}^-(w_1,w_2)\big)-
\big(\mathcal{F}^+(\tilde{w}_1,\tilde{w}_2),\mathcal{F}^-
(\tilde{w}_1,\tilde{w}_2)\big)\big\|_{X^2_T} \\
&\le c\big(\epsilon^{\frac47-}T^{\frac57+}+
\epsilon^{1-\kappa_{\alpha}}T^{\frac1{q_{\alpha}'}}+\epsilon^{\frac12}T+
\epsilon^{\frac38}T^{\frac12}\big)\\ & \quad \times
\big(\|(w_1,w_2)\|_{X^2_T}+\|(\tilde{w}_1,\tilde{w}_2)\|_{X^2_T}
\big)\|(w_1,w_2)-(\tilde{w}_1,\tilde{w}_2)\|_{X^2_T}.
\end{split}
\end{equation}
Observe that
$$c\big(\epsilon^{\frac47-}T^{\frac57+}+
\epsilon^{1-\kappa_{\alpha}}T^{\frac1{q_{\alpha}'}}+\epsilon^{\frac12}T+
\epsilon^{\frac38}T^{\frac12}\big) \lesssim 1,$$
since $T \lesssim \epsilon^{-\frac12}$. Therefore, we
deduce by choosing
\begin{displaymath}
a=2c\|(w_1^0,w_2^0)\|_{H^2\times H^2} \quad \text{and} \quad
T \sim \epsilon^{-\frac12}
\end{displaymath}
that $(\mathcal{F}^+,\mathcal{F}^-)$ defines a contraction in the space $X^2_T(a)$, which concludes the proof of
Proposition \ref{NCP} by the Picard fixed point theorem.
\end{proof}

\section{Other strongly dispersive Boussinesq systems}
Other two-dimensional Boussinesq systems  can be classified
according to the order of the eigenvalues of the dispersion matrix
(see Introduction). As we already mentioned,  when $b,d> 0$, the
well-posedness on time scales of order $1/{\sqrt{\epsilon}}$ has
been established  in \cite{DMS1} for initial data in low order
Sobolev spaces and on time interval of order $1/{\epsilon}$ in
\cite{MSZ} for smooth initial data (with loss of derivatives). We
thus restrict ourselves to the case where at least one of the
coefficients $b$ or $d$ vanishes, emphasizing the strongest
dispersive case corresponding to eigenvalues of order $2$ (the
so-called \lq\lq Schr\"{o}dinger\rq\rq \ type systems) where $a<0$
and $c<0.$

The local well posedness for some of those systems has been established in the two-dimensional case by Cung The Anh \cite{CTA},
but without looking for the dependence of the existence time with respect to $\epsilon$. Note however that the proof given in
\cite{CTA} for the case where $d=0$ (Theorem 3.5) does not seem to be correct.

We will complete the analysis, in particular by  checking the $\epsilon$  dependence. It turns out that when $d>0,$ one can
establish the local well-posedness of  Boussinesq \lq\lq Schr\"{o}dinger\rq\rq \ type systems on time intervals of order
$1/{\sqrt{\epsilon}}$ by elementary energy methods, in relatively low order Sobolev spaces.

We first state a useful lemma.

\begin{lemma}\label{impa}
Let $s\geq3/2$. Then there exists $C>0$ such that for any ${\bf v}\in H^{s+1}(\R^2)$ and $\eta \in H^s(\R^2),$
\begin{equation} \label{impa.5}
|(\Lambda ^s\nabla\cdot (\eta {\bf v}),\Lambda ^s \eta)|\leq C\|{\bf v}\|_{H^ {s+1}}\|\eta\|_{H^s}^2.
 \end{equation}
\end {lemma}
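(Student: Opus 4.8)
The plan is to use the divergence structure together with the standard product and commutator (Kato--Ponce) estimates, the key point being that the apparent loss of one derivative on $\eta$ is recovered by an integration by parts producing only $\nabla\cdot\bv$. First I would write $\nabla\cdot(\eta\bv)=\bv\cdot\nabla\eta+\eta\,\nabla\cdot\bv=\sum_{j=1}^2 v_j\partial_{x_j}\eta+\eta\,\nabla\cdot\bv$ and split $(\Lambda^s\nabla\cdot(\eta\bv),\Lambda^s\eta)$ into the transport contribution and the remaining one. Throughout I would use that $s\ge\frac32>1$ yields the embeddings $H^s(\R^2)\hookrightarrow L^\infty$ and $H^{s+1}(\R^2)\hookrightarrow W^{1,\infty}$, so that $\|\eta\|_{L^\infty}\lesssim\|\eta\|_{H^s}$ and $\|\nabla\cdot\bv\|_{L^\infty}\lesssim\|\bv\|_{H^{s+1}}$.

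The term $(\Lambda^s(\eta\,\nabla\cdot\bv),\Lambda^s\eta)$ is handled directly by the Kato--Ponce product estimate $\|\Lambda^s(fg)\|_{L^2}\lesssim\|f\|_{L^\infty}\|g\|_{H^s}+\|f\|_{H^s}\|g\|_{L^\infty}$ with $f=\eta$ and $g=\nabla\cdot\bv$, which together with the two embeddings gives $\|\Lambda^s(\eta\,\nabla\cdot\bv)\|_{L^2}\lesssim\|\eta\|_{H^s}\|\bv\|_{H^{s+1}}$; Cauchy--Schwarz against $\Lambda^s\eta$ then closes it. For the transport term I would commute $\Lambda^s$ past $v_j$, writing $\Lambda^s(v_j\partial_{x_j}\eta)=v_j\,\partial_{x_j}\Lambda^s\eta+[\Lambda^s,v_j]\partial_{x_j}\eta$. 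Summing the principal piece over $j$ and integrating by parts gives the crucial cancellation
\[
\sum_{j=1}^2\big(v_j\,\partial_{x_j}\Lambda^s\eta,\Lambda^s\eta\big)=-\frac12\int_{\R^2}(\nabla\cdot\bv)\,\big|\Lambda^s\eta\big|^2\,dx,
\]
which is bounded by $\frac12\|\nabla\cdot\bv\|_{L^\infty}\|\eta\|_{H^s}^2\lesssim\|\bv\|_{H^{s+1}}\|\eta\|_{H^s}^2$.

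The only genuine obstacle is the commutator remainder $\big([\Lambda^s,v_j]\partial_{x_j}\eta,\Lambda^s\eta\big)$, since the naive bound $\|[\Lambda^s,v_j]\partial_{x_j}\eta\|_{L^2}\lesssim\|\nabla v_j\|_{L^\infty}\|\Lambda^{s-1}\partial_{x_j}\eta\|_{L^2}+\|\Lambda^s v_j\|_{L^2}\|\partial_{x_j}\eta\|_{L^\infty}$ fails when $\frac32\le s<2$, because $\partial_{x_j}\eta$ need not belong to $L^\infty$ for $\eta\in H^s$ only. I would instead invoke the general Kato--Ponce commutator estimate with tunable exponents, $\|[\Lambda^s,f]g\|_{L^2}\lesssim\|\nabla f\|_{L^{p_1}}\|\Lambda^{s-1}g\|_{L^{p_2}}+\|\Lambda^s f\|_{L^{p_3}}\|g\|_{L^{p_4}}$ with $\frac1{p_1}+\frac1{p_2}=\frac1{p_3}+\frac1{p_4}=\frac12$, applied to $f=v_j$ and $g=\partial_{x_j}\eta$; in the paper's framework this can be realized through the fractional Leibniz rule of Lemma \ref{FLR} combined with the identity $D^1_x=R_1\partial_{x_1}+R_2\partial_{x_2}$ of Proposition \ref{Riesz}. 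The point is that the extra derivative on $\bv$ lets me place $\Lambda^s v_j\in H^1\hookrightarrow L^{p_3}$ for every finite $p_3$, while $\partial_{x_j}\eta\in H^{s-1}\hookrightarrow L^{p_4}$ with $\frac1{p_4}=\frac{2-s}2$ (so $p_4\in[4,\infty)$); choosing $p_1=\infty$, $p_2=2$ and $p_3=\frac2{s-1}$, $p_4=\frac2{2-s}$ balances the H\"older exponents and yields $\|[\Lambda^s,v_j]\partial_{x_j}\eta\|_{L^2}\lesssim\|\bv\|_{H^{s+1}}\|\eta\|_{H^s}$. For $s\ge2$ the elementary choice $p_3=2$, $p_4=\infty$ already works. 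Cauchy--Schwarz against $\Lambda^s\eta$ and collecting the three contributions then give \eqref{impa.5}.
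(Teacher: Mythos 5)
Your proof is correct and follows essentially the same route as the paper's: the same decomposition of $(\Lambda^s\partial_{x_j}(\eta v_j),\Lambda^s\eta)$ into a product term handled by the Leibniz/product estimate, a commutator term handled by Ponce's estimate (Lemma \ref{Ponce}), and a transport term killed by integration by parts against $\Lambda^s\eta$; the paper simply fixes the exponents $p_1=\infty$, $p_2=2$, $p_3=p_4=4$ once and for all, using $H^{1/2}(\R^2)\hookrightarrow L^4(\R^2)$, which covers every $s\geq\frac32$ uniformly instead of your $s$-dependent choice. The one slip is the endpoint $s=2$, where both your formula $p_4=\frac{2}{2-s}$ and your fallback $p_3=2$, $p_4=\infty$ degenerate (since $H^1(\R^2)\not\hookrightarrow L^{\infty}(\R^2)$), but the paper's choice $p_3=p_4=4$ repairs this instantly within your own framework.
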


The proof of this lemma is based on the following commutator estimate obtained  by Ponce in Lemma 2.3
of \cite{Po}.
\begin{lemma}\label{Ponce}
If $s>1$ and $1<p<\infty$, then
\begin{equation} \label{Ponce1}
\|[\Lambda^s;f]g\|_{L^p} \leq C\big(\|\nabla f\|_{L^{p_1}}\|\Lambda^{s-1}g\|_{L^{p_2}}+
\|\Lambda^s f\|_{L^{p_3}}\|g\|_{L^{p_4}} \big),
\end{equation}
where $p_1, \ p_4 \in (1,+\infty]$ such that
$$\frac1{p_1}+\frac1{p_2}=\frac1{p_3}+\frac1{p_4}=\frac1p.$$
\end{lemma}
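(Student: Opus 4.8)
The plan is to represent the commutator as a bilinear Fourier multiplier and to read the gain of one derivative directly off its symbol. On the Fourier side,
$$\mathcal{F}\big([\Lambda^s;f]g\big)(\xi)=\int_{\R^2}\sigma(\xi-\eta,\eta)\,\widehat f(\xi-\eta)\,\widehat g(\eta)\,d\eta,\qquad \sigma(\zeta,\eta)=\langle\zeta+\eta\rangle^s-\langle\eta\rangle^s,$$
where $\langle\xi\rangle=(1+|\xi|^2)^{1/2}$, and where $\zeta$ is the frequency carried by $f$ and $\eta$ the one carried by $g$. I would cut $(\zeta,\eta)$-space with a smooth homogeneous partition into the region $\mathrm{A}=\{|\zeta|\lesssim|\eta|\}$, where $f$ sits at lower frequency, and the region $\mathrm{B}=\{|\zeta|\gtrsim|\eta|\}$, where $f$ is at comparable or higher frequency. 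These two regions are designed to produce exactly the two terms on the right-hand side of \eqref{Ponce1}.

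On region $\mathrm{A}$ I would extract one derivative from $f$ by the fundamental theorem of calculus,
$$\langle\zeta+\eta\rangle^s-\langle\eta\rangle^s=\zeta\cdot\int_0^1 s\,\langle\eta+\tau\zeta\rangle^{s-2}(\eta+\tau\zeta)\,d\tau=:\zeta\cdot m(\zeta,\eta).$$
Because $|\zeta|\lesssim|\eta|$ here, the vector symbol $m$ is a Coifman–Meyer symbol of order $s-1$ away from the origin, uniformly in the dyadic scales, while the prefactor $\zeta$ supplies $\nabla f$. The Coifman–Meyer bilinear multiplier theorem then gives the bound $\|\nabla f\|_{L^{p_1}}\|\Lambda^{s-1}g\|_{L^{p_2}}$, the Hölder relation $\tfrac1{p_1}+\tfrac1{p_2}=\tfrac1p$ selecting the exponents and accommodating the endpoint $p_1=\infty$, that is, $f$ merely Lipschitz.

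On region $\mathrm{B}$ the mean-value gain is unavailable, so I would instead place all of the weight on $f$: since $|\zeta|\gtrsim|\eta|$ one has $\langle\zeta+\eta\rangle\sim\langle\zeta\rangle\gtrsim\langle\eta\rangle$, and a direct computation shows that in this region $\sigma$ is a Coifman–Meyer symbol of order $s$ that carries all of its growth in the $f$-frequency $\zeta$ and essentially none in $\eta$. Coifman–Meyer again applies and yields $\|\Lambda^s f\|_{L^{p_3}}\|g\|_{L^{p_4}}$ with $\tfrac1{p_3}+\tfrac1{p_4}=\tfrac1p$ and the endpoint $p_4=\infty$ permitted. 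Adding the two contributions produces \eqref{Ponce1}.

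I expect the main obstacle to be the verification of the Mikhlin–Hörmander estimates for these bilinear symbols uniformly across the transition zone $|\zeta|\sim|\eta|$ where regions $\mathrm{A}$ and $\mathrm{B}$ overlap, together with the careful treatment of the low-frequency part, where the inhomogeneous weight $\langle\cdot\rangle$ (rather than $|\cdot|$) must be retained. This is also where the hypothesis $s>1$ enters, guaranteeing that $\Lambda^{s-1}$ is of positive order so that the symbol $m$ is genuinely smoothing and the endpoint estimates close. Establishing this symbol regularity—rather than the otherwise routine application of Coifman–Meyer—is the technical heart of the argument, and is precisely the content of Ponce's Lemma 2.3 in \cite{Po}.
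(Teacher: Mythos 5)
First, note that the paper does not prove this lemma at all: it is imported verbatim from Ponce's Lemma 2.3 of \cite{Po}, so your sketch must be measured against the standard proofs in the literature, which do run along the paraproduct lines you propose. Your treatment of region $\mathrm{A}$ is correct: with a smooth cutoff to, say, $|\zeta|\le\frac12|\eta|$ one has $\langle\eta+\tau\zeta\rangle\sim\langle\eta\rangle$ uniformly in $\tau\in[0,1]$, the normalized symbol $m(\zeta,\eta)\langle\eta\rangle^{-(s-1)}$ obeys Mikhlin--H\"ormander-type bilinear bounds there, and the endpoint $p_1=\infty$ is covered by the multilinear Calder\'on--Zygmund extension of Coifman--Meyer (Grafakos--Torres), not by the classical statement alone -- a point worth making explicit since the lemma's force in this paper comes precisely from the $L^\infty$ endpoints.

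The genuine gap is your assertion that on region $\mathrm{B}$ ``$\sigma$ is a Coifman--Meyer symbol of order $s$'' so that ``Coifman--Meyer again applies.'' This is false in the high-high subregion $|\zeta|\sim|\eta|$, $|\zeta+\eta|=O(1)$ -- exactly the transition zone you flag as needing verification, except that the uniform estimates you hope to verify do not hold. Concretely, take $|\zeta|\sim|\eta|\sim R\gg 1$ with $\zeta+\eta$ bounded and a multi-index $\beta$ with $|\beta|>s$: then $\big|\partial_\eta^\beta\big(\sigma(\zeta,\eta)\langle\zeta\rangle^{-s}\big)\big|\sim\langle\zeta+\eta\rangle^{s-|\beta|}R^{-s}\sim R^{-s}$, whereas the Coifman--Meyer condition demands a bound $O\big((|\zeta|+|\eta|)^{-|\beta|}\big)=O(R^{-|\beta|})$, which is strictly smaller since $|\beta|>s$. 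No amount of careful symbol checking closes this; the missing idea is a Littlewood--Paley decomposition of the high-high piece: writing it as $\sum_k P_kf\,\widetilde P_kg$, the output of each block has frequency $\lesssim 2^k$, a rescaled bilinear multiplier bound holds on each block with constant $O(2^{ks})$, and one sums the resulting geometric series -- convergent precisely because $s>0$. This is also where the positivity of $s$ genuinely enters, not, as your final paragraph suggests, through $\Lambda^{s-1}$ being ``smoothing'' in region $\mathrm{A}$: the mean-value factorization and the region-$\mathrm{A}$ symbol bounds are valid for every real $s$. With the dyadic summation inserted and the $p_1,p_4=\infty$ endpoints justified by the multilinear Calder\'on--Zygmund theory, your outline does close and reproduces Ponce's lemma.
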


\begin{proof}[Proof of Lemma \ref{impa}]
Fix $s \ge \frac32$. First observe that
\begin{equation} \label{impa.1}
(\Lambda ^s\nabla\cdot (\eta {\bf v}),\Lambda ^s \eta)=(\Lambda ^s\partial_{x_1}(\eta v_1),\Lambda ^s \eta)
+(\Lambda ^s\partial_{x_2}(\eta v_2),\Lambda ^s \eta).
\end{equation}
Moreover, we can rewrite the terms appearing on the right-hand side of \eqref{impa.1} as
\begin{equation} \label{impa.2}
(\Lambda ^s\partial_{x_j}(\eta v_j),\Lambda ^s \eta)\!=
(\Lambda ^s(\eta\partial_{x_j} v_j),\Lambda ^s \eta)
\!+([\Lambda^s,v_j]\partial_{x_j}\eta,\Lambda ^s \eta)+(v_j\Lambda ^s\partial_{x_j}\eta,\Lambda ^s \eta).
\end{equation}
We use Lemma \ref{FLR} and the Sobolev embedding
$H^{\alpha}(\mathbb R^2) \hookrightarrow L^{\infty}(\mathbb R^2)$, for $\alpha>1$,
to estimate the first term on the right-hand side of \eqref{impa.2} as
\begin{equation} \label{impa.2a}
\big|(\Lambda ^s(\eta\partial_{x_j} v_j),\Lambda ^s \eta)\big|
\leq C \|{\bf v}\|_{H^ {s+1}}\|\eta\|_{H^s}^2.
\end{equation}
On the other hand, it follows from Lemma \ref{Ponce} with $p_1=\infty$, $p_2=2$, $p_3=p_4=4$, and
the Sobolev embedding
$H^{\frac12}(\mathbb R^2) \hookrightarrow L^4(\mathbb R^2)$ that
\begin{equation} \label{impa.3}
\begin{split}
\big|([\Lambda^s,v_j]\partial_{x_j}&\eta,\Lambda ^s \eta)\big|\\
&\leq C\big(\|\nabla v_j\|_{L^{\infty}}\|\Lambda^{s-1}\partial_{x_j}\eta\|_{L^{2}}+
\|\Lambda^s v_j\|_{L^4}\|\partial_{x_j}\eta\|_{L^4} \big)\|\Lambda^s\eta\|_{L^{2}} \\
& \leq C \|{\bf v}\|_{H^ {s+1}}\|\eta\|_{H^s}^2.
\end{split}
\end{equation}
Finally, we have integrating by parts that
\begin{displaymath}
(v_j\Lambda ^s\partial_{x_j}\eta,,\Lambda ^s \eta)
=-\frac12(\partial_{x_j}v_j\Lambda ^s\eta,\Lambda ^s \eta),
\end{displaymath}
which implies by H\"older's inequality and the Sobolev embedding
$H^{\alpha}(\mathbb R^2) \hookrightarrow L^{\infty}(\mathbb R^2)$, for $\alpha>1$, that
\begin{equation}  \label{impa.4}
\big|(v_j\Lambda ^s\partial_{x_j}\eta,\Lambda ^s \eta)\big|\leq C \|{\bf v}\|_{H^ {s+1}}\|\eta\|_{H^s}^2.
\end{equation}
Therefore, estimate \eqref{impa.5} is deduced gathering \eqref{impa.1}--\eqref{impa.4}.
\end{proof}

%We also recall a fractional Leibniz rule (see \cite{T}):

%\begin{lemma}\label{Tao}
%Let $0<s<1$ and let $1<p,q,r<\infty$ be such that $1/p+1/q=1/r,$ and let $D^s$ be a Fourier multiplier of order $s$. Let $s_1,s_2 >0$ be such that $s_1+s_2=s$. Then there exists $C=C(p,q,r,s,s_1,s_2, d)$ such that
%
%$$||D^s(fg)-(D^sf)g-f(D^sg)||_{L^r(\R^d)}\leq C\lbrack ||f||_{W^{s_1,p}(\R^d})||g||_{W^{s_2,q}(\R^d)}\rbrack$$
%\end{lemma}

\subsection{Schr\"{o}dinger type Boussinesq systems when $d>0$}

As already mentioned, attention is focused here in systems where the eigenvalues $\lambda_{\pm}(\xi)$ are of order $2$. Because of
the constraint on $a,b,c,d$ the only possibilities are  $a<0,$ $c<0,$ $b=0$, $d>0$ or $a<0,$ $c<0,$ $b>0,$ $d=0$.
\footnote{When surface tension is large enough so that $\tau>1/3$, we have the admissible system $a=b=d=0,\; c=\tau -1/3$ but
we will note consider it here.}
We will study here the first case and thus consider  the systems

\begin{equation}\label{BO}
\left\lbrace
\begin{array}{l}
    \eta_t+\nabla \cdot {\bf v}+\epsilon \lbrack\nabla\cdot(\eta {\bf v})+a \nabla\cdot \Delta{\bf v}\rbrack=0 \\
    {\bf v}_t+\nabla \eta+\epsilon\lbrack \frac{1}{2}\nabla |{\bf v}|^2+ c\nabla\Delta \eta- d\Delta {\bf v}_t\rbrack=0,
\end{array}\right.
\end{equation}
where $a<0,$ $c<0,$ $d>0.$

Those systems are called in \cite{BCS2} ``Benjamin-Ono type systems" since  the dispersion matrix has, in the one-dimensional
case and when $\epsilon =1$,  eigenvalues  equal to $\pm ((\frac{ac}{d})^{\frac{1}{2}}k|k|+r(k))$ where $r(k)$ is
$0(\frac{1}{|k|})$ as $k \to \infty.$

In the two-dimensional case, the dispersion matrix in Fourier variables reads

 \begin{displaymath}
\widehat{A}(\xi_1,\xi_2)=i\begin{pmatrix} 0 & \xi_1(1-\epsilon a |\xi|^2) & \xi_2(1-\epsilon a |\xi|^2) \\
                 \frac{\xi_1(1-\epsilon c|\xi|^2)}{1+\epsilon d|\xi|^2}  & 0 & 0 \\
                 \\
                 \frac{\xi_2(1-\epsilon c|\xi|^2)}{1+\epsilon d|\xi|^2}    & 0 & 0 \end{pmatrix}.
\end{displaymath}

\vspace{0.3cm}

The corresponding nonzero eigenvalues are

$$\lambda_{\pm}=\pm  |\xi|\left(\frac{(1-\epsilon a|\xi|^2)(1-\epsilon c|\xi|^2)}{1+\epsilon d|\xi|^2}\right)^{\frac{1}{2}}.$$

For $\epsilon$ fixed, $\lambda_{\pm}$  are equivalent as $|\xi|\to\infty$ to $\pm \epsilon\left(\frac{ac}{d}\right )^{\frac{1}{2}}|\xi|^2,$
which explains the terminology \lq\lq Schr\"{o}dinger type" Boussinesq systems.

After diagonalization of the linear part, \eqref{BO} can thus be written as a (nonlinearly coupled) system of two Schr\"{o}dinger type
nonlocal equations involving derivatives of the unknowns, and we might think of solving the Cauchy problem by applying the general
results of \cite{KPV4}. We will   refrain to do that here because  an elementary energy method on the original formulation \eqref{BO},
which is  the extension to the two-dimensional case of the corresponding one-dimensional result in \cite{BCS2} applies when $d>0,$ and
when $d=0$ for curlfree velocities.

\begin{theorem}\label{couveflor}  Let  $s\geq 3/2$ and $0 < \epsilon\le1$ be given.

(i) Assume that $a=c$.
Then, for every $(\eta_0,{\bf v}_0)\in H^s(\R^2)\times H^{s+1}(\R^2)^2,$ there exist $T=T(||\eta_0||_{H^s}, ||{\bf v}_0||_{H^{s+1}})$
and a unique solution $$(\eta,{\bf v})\in C([0,T_{\epsilon}]; H^s(\R^2))\times  C([0,T_{\epsilon}]; H^{s+1}(\R^2)^2),$$ where
$T_{\epsilon}=T\epsilon^{-1/2}$, of \eqref{BO} with initial data $(\eta_0,{\bf v}_0),$ which is uniformly
bounded on $\lbrack 0,T_{\epsilon}\rbrack.$

(ii) Assume that  $a\neq c$.
Then, for every $(\eta_0,{\bf v}_0)\in H^{s+1}(\R^2)\times H^{s+2}(\R^2)^2,$ there exist
$T=T(||\eta_0||_{H^{s+1}}, ||{\bf v}_0||_{H^{s+2}})$ and a unique solution
$$(\eta,{\bf v})\in C([0,T_{\epsilon}]; H^{s+1}(\R^2)) \times  C([0,T_{\epsilon}]; H^{s+2}(\R^2)^2), $$
where $T_{\epsilon}=T\epsilon^{-1/2}$ of \eqref{BO} with initial data $(\eta_0,{\bf v}_0)$  which is uniformly
bounded on $\lbrack 0,T_{\epsilon}\rbrack.$
\end{theorem}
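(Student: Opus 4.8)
The plan is to treat \eqref{BO} by a quasilinear energy method, exploiting that $d>0$ makes $(1-\epsilon d\Delta)$ a positive, $\epsilon$-uniformly elliptic Fourier multiplier bounded below by $1$. Solving the second equation for ${\bf v}_t$ then costs nothing at the bottom of the scale and gains two derivatives at the top, which is exactly what forces the natural gap of one derivative between $\eta$ and ${\bf v}$ in the energy space. I would first build approximate solutions either by Friedrichs mollification of the quadratic nonlinearities or by a linearized iteration scheme (plain Picard is unavailable, since $\nabla\cdot(1+\epsilon a\Delta){\bf v}$ loses three derivatives on ${\bf v}$), and then reduce everything to a single a priori energy estimate, uniform in $\epsilon$, together with the analogous estimate for the difference of two solutions; continuity in time and persistence of regularity then follow in the standard way.

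For part (i), apply $\Lambda^s$ to both equations and pair the $\eta$-equation with $\Lambda^s\eta$ and the ${\bf v}$-equation with $\Lambda^s{\bf v}$ in $L^2$. The time derivative of the ${\bf v}$-part yields $\tfrac12\frac{d}{dt}\big(\|{\bf v}\|_{H^s}^2+\epsilon d\|\nabla{\bf v}\|_{H^s}^2\big)$, since $(1-\epsilon d\Delta)$ is self-adjoint and positive. The first-order coupling terms $(\Lambda^{2s}\eta,\nabla\cdot{\bf v})$ and $(\Lambda^{2s}{\bf v},\nabla\eta)$ cancel after one integration by parts, and the two third-order dispersive coupling terms combine, after integration by parts, into $\epsilon(c-a)(\Lambda^{2s}\nabla\Delta\eta,{\bf v})$, which \emph{vanishes} precisely because $a=c$. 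Hence the energy
\[
E_s(t)=\|\eta\|_{H^s}^2+\|{\bf v}\|_{H^s}^2+\epsilon d\|\nabla{\bf v}\|_{H^s}^2
\]
is exactly conserved by the linear flow, and it is $\epsilon$-uniformly equivalent to $\|\eta\|_{H^s}^2+\|{\bf v}\|_{H^s}^2+\epsilon\|{\bf v}\|_{\dot H^{s+1}}^2$.

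It remains to estimate the nonlinear contributions to $\frac{d}{dt}E_s$. The coupling nonlinearity $\epsilon(\Lambda^s\nabla\cdot(\eta{\bf v}),\Lambda^s\eta)$ is controlled directly by Lemma \ref{impa} by $C\epsilon\|{\bf v}\|_{H^{s+1}}\|\eta\|_{H^s}^2$, while the term $\tfrac{\epsilon}{2}(\Lambda^s{\bf v},\Lambda^s\nabla|{\bf v}|^2)$ is bounded, using the fractional Leibniz rule of Lemma \ref{FLR} (or the commutator estimate of Lemma \ref{Ponce}) together with an integration by parts, by $C\epsilon\|\nabla{\bf v}\|_{L^\infty}\|{\bf v}\|_{H^s}^2\le C\epsilon\|{\bf v}\|_{H^{s+1}}\|{\bf v}\|_{H^s}^2$. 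The decisive point is that in both bounds the dangerous top norm $\|{\bf v}\|_{H^{s+1}}$ enters only to the first power; using $\|{\bf v}\|_{H^{s+1}}\lesssim\epsilon^{-1/2}E_s^{1/2}$ the $\epsilon$ prefactor is merely halved, so that $\frac{d}{dt}E_s\le C\epsilon^{1/2}E_s^{3/2}$. Comparison with the ODE $y'=C\epsilon^{1/2}y^{3/2}$ gives $E_s(t)\le 2E_s(0)$ on $[0,T\epsilon^{-1/2}]$ with $T$ depending only on $E_s(0)$, yielding both the lifespan $T_\epsilon=T\epsilon^{-1/2}$ and the uniform-in-$\epsilon$ bound.

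Part (ii) is where the real difficulty lies, and it is the main obstacle: when $a\neq c$ the third-order coupling no longer cancels and leaves the remainder $\epsilon(c-a)(\Lambda^{2s}\nabla\Delta\eta,{\bf v})$, a genuine three-derivative cross term that cannot be absorbed at the same regularity level. The conceptual resolution is that the exact symmetrizer for the ${\bf v}$-variable is not $(1-\epsilon d\Delta)\Lambda^{2s}$ but the Fourier multiplier with symbol $\frac{(1+\epsilon|a||D|^2)(1+\epsilon d|D|^2)}{1+\epsilon|c||D|^2}\Lambda^{2s}$, which renders the full dispersive part skew-adjoint for every $a,c<0$ and remains $\epsilon$-uniformly equivalent to the same space; after this change the only surviving nonlocal object in the nonlinear ${\bf v}$-estimate is the bounded order-zero factor $(1+\epsilon|a||D|^2)/(1+\epsilon|c||D|^2)$. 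Carrying this factor through (equivalently, absorbing the $O(\epsilon)$ dispersive mismatch by working one derivative higher and spending the elliptic gain of $(1-\epsilon d\Delta)^{-1}$) is what costs one extra derivative and accounts for the regularity shift to $H^{s+1}\times H^{s+2}$; once the corresponding estimate $\frac{d}{dt}E_{s+1}\le C\epsilon^{1/2}E_{s+1}^{3/2}$ is in hand, the Gronwall argument is identical and again produces $T_\epsilon=T\epsilon^{-1/2}$. Finally, uniqueness and smooth dependence on the data follow by running the same energy identity on the difference of two solutions, one derivative lower, where Lemma \ref{impa} once more supplies the crucial control of the coupling term.
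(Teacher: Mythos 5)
Your part (i) is essentially the paper's own proof: apply $\Lambda^s$, pair with $\Lambda^s\eta$ and $\Lambda^s{\bf v}$, observe that all linear terms cancel exactly when $a=c$, bound the coupling term by Lemma \ref{impa} and the $\nabla|{\bf v}|^2$ term by Kato--Ponce, and close with $Y'\le C\epsilon^{1/2}Y^{3/2}$ using that the $d$-term places $\epsilon d\|\nabla{\bf v}\|_{H^s}^2$ in the energy; the regularization/Bona--Smith remarks also match the paper, as does the uniqueness argument (the paper runs it at the $L^2\times H^1$ level on the difference, with weights $|c|$ and $|a|$ so the cubic dispersive terms cancel for any $a,c<0$ --- note Lemma \ref{impa} is not what is used there, since it needs $H^s$ regularity of $\eta$ with $s\ge 3/2$). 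In part (ii) you genuinely diverge: instead of your rational symmetrizer, the paper simply clears the denominator, testing the first equation with $(1+\epsilon c\Delta)\Lambda^{2s}\eta$ and the second with $(1+\epsilon a\Delta)\Lambda^{2s}{\bf v}$ --- i.e.\ your multipliers times the positive factor with symbol $1+\epsilon|c||\xi|^2$ --- so that every operator stays differential, the twelve linear terms cancel pairwise after integrations by parts, and Lemmas \ref{impa} and \ref{Ponce} apply verbatim to the four nonlinear terms $J_9,\dots,J_{12}$. Both symmetrizations work; yours requires only the uniform $L^2$-bound on $\Theta=(1+\epsilon|a||D|^2)(1+\epsilon|c||D|^2)^{-1}$, which is immediate by Plancherel, and no commutator with $\Theta$ is even needed because the $\epsilon^{1/2}$-weighted $H^{s+1}$ norm of ${\bf v}$ sits in the energy. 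However, your explanation of the regularity shift is off: with your exact symmetrizer the estimate closes with $E(0)\lesssim\|\eta_0\|_{H^s}^2+\|{\bf v}_0\|_{H^{s+1}}^2$, so ``carrying the order-zero factor through'' costs nothing, and your route run at level $s+1$ proves the stated theorem (indeed it suggests a slightly stronger one). The shift to $H^{s+1}\times H^{s+2}$ in the paper is instead forced by its cleared-denominator energy, which contains $\epsilon|c|\|\nabla\eta\|_{H^s}^2$ and $|a|d\,\epsilon^2\|\Delta{\bf v}\|_{H^s}^2$ and therefore needs the shifted data spaces for uniform-in-$\epsilon$ control at $t=0$; what the paper buys is elementary integrations by parts with purely differential operators, at the price of one derivative. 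One small slip in (i): the commutator bound $C\epsilon\|\nabla{\bf v}\|_{L^\infty}\|{\bf v}\|_{H^s}^2$ for $\tfrac{\epsilon}2(\Lambda^s\nabla|{\bf v}|^2,\Lambda^s{\bf v})$ requires the transport structure $({\bf v}\cdot\nabla){\bf v}$, which the paper only has under $\curl{\bf v}=0$ (its Lemma \ref{curlfree}, used in the $d=0$ case); without irrotationality the component mixing in $\nabla|{\bf v}|^2$ breaks it, but the weaker bound $C\epsilon\|{\bf v}\|_{H^{s+1}}\|{\bf v}\|_{H^s}^2$ that you actually use follows from one integration by parts and the product estimate, exactly as in the paper's bound on $I_3$.
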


\begin{proof}
\textit{(i) Uniqueness in the class $C([0,T]; H^{3/2}(\R^2))\times  C([0,T]; H^{5/2}(\R^2))^2$.}

In what follows we will use freely the embedding $H^{\frac{1}{2}}(\R^2)\hookrightarrow L^4(\R^2)$ and
thus $H^{\frac{3}{2}}(\R^2)\hookrightarrow W^{1,4}(\R^2)$ (see \cite{BL}).

Plainly it suffices to consider $\epsilon =1$. Let $(\eta_1,{\bf v}_1),$  $(\eta_2,{\bf v}_2)$ two solutions in the
class $C([0,T]; H^{3/2}(\R^2))\times  C([0,T]; H^{5/2}(\R^2))^2$ and let $N=\eta_1-\eta_2,$
${\bf V}= {\bf v}_1-{\bf v}_2.$ We have therefore

\begin{equation}\label{BObis}
\left\lbrace
\begin{array}{l}
    N_t+\nabla \cdot {\bf V}+\epsilon \lbrack\nabla\cdot(N {\bf v}_1+\eta_2 {\bf V})+a \nabla\cdot \Delta{\bf V}\rbrack=0 \\
    {\bf V}_t+\nabla N+\epsilon\lbrack \frac{1}{2}\nabla( {\bf V}\cdot ( {\bf v}_1+{\bf v}_2))+ c\nabla\Delta N- d\Delta {\bf V}_t\rbrack=0,
\end{array}\right.
\end{equation}

We take the $L^2$ scalar product of the first equation by $|c|N$, of the second one by $|a|{\bf V}$ and add the resulting equations.
The contributions of the cubic dispersive terms cancel out \footnote{One term has to be interpreted as a $H^{1/2}-H^{-1/2}$ duality.}
and we find

\begin{equation}\label{Gro}
\begin{split}
   &\frac{1}{2}\frac{d}{dt}\int_{\R^2}(|c||N|^2+|a||{\bf V}|^2+d|\nabla {\bf V}|^2)=\int_{\R^2}(aN\nabla \cdot{{\bf V}}+c\nabla N\cdot {\bf V})\\
   &-\int_{\R^2}\nabla \cdot{(N{\bf v}_1+\eta_2{\bf V}})N-\frac{1}{2}\int_{\R^2}\nabla( {\bf V}\cdot ( {\bf v}_1+{\bf v}_2))\cdot{\bf V}.
   \end{split}
   \end{equation}

   One has successively

   \begin{equation}
\left| \int_{\R^2}(|a|N\nabla \cdot{{\bf V}}+|c|\nabla N\cdot {\bf V})\right|=\left|
\int_{\R^2}(|a|-|c|)N\nabla \cdot{{\bf V}}\right|\leq C\|N\|_{L^2}\|\nabla {\bf V}\|_{L^2}.
   \end{equation}

   Then
 $$  \int_{\R^2}\nabla \cdot{(N{\bf v}_1+\eta_2{\bf V}})N= \int_{\R^2}(N^2\nabla \cdot{{\bf v}_1}+\eta_2N\nabla \cdot{{\bf V}}+(\nabla N\cdot{\bf v}_1)N+(\nabla \eta_2\cdot{\bf V})N.$$

Observing that  $\int_{\R^2}(\nabla N\cdot{\bf v}_1)N=-\frac{1}{2}\int_{\R^2}N^2 \nabla \cdot{{\bf v}_1},$ the contribution of this term
is upper bounded by

\begin{displaymath}
\begin{split}
C(\|\nabla \cdot{{{\bf v}_1}}\|_{L^{\infty}}\|N\|^2_{L^2}+\|\eta_2\|_{L^{\infty}}\|N\|_{L^2}\|\nabla {\bf V}\|_{L^2}&+\|{\bf V}\|_{L^4}\|\nabla \eta_2\|_{L^4}\|N\|_{L^2}) \\
&\leq C(\|N\|_{L^2}^2+\|\nabla {\bf V}\|_{L^2}^2).
\end{split}
\end{displaymath}

Finally,

$$
\left| \int_{\R^2}\nabla( {\bf V}\cdot ( {\bf v}_1+{\bf v}_2))\cdot{\bf V}\right|
\leq C(\|{\bf v}_1\|_{W^{1,\infty}}+\|{\bf v}_2\|_{W^{1,\infty}})\|{\bf V}\|_{H^1}^2.$$

The result follows from \eqref{Gro} and Gronwall's lemma.\\

\noindent \textit{(ii) Existence.} %We will  consider only the case $s=2$.
We will  focus only on the derivation of the energy estimates checking the dependence of the existence interval on $\epsilon$. The
complete proof would result from a standard compactness argument implemented on a regularized version of the system (for instance by
truncating the high frequencies in the differential operators). The strong continuity in time and the continuity of the flow would
result from a standard application of the Bona-Smith trick. \\

%We  take the $L^2$ scalar product of \eqref {BO} successively by $(c\eta, a{\bf v})^T,$  $(-c\Delta \eta,-a\Delta{\bf v})^T, $   $(c\Delta ^2 \eta,a\Delta ^2{\bf v})^T. $ Adding the resulting equations we get

\noindent \textit{Energy estimates in the case $a=c$.}
We apply the operator $\Lambda^s$ to both equations of  \eqref {BO}, multiply the first one by $|a|\Lambda^s \eta$, take the  scalar
product of the second one by $|a|\Lambda^s{\bf V}$, integrate and sum. All the linear  terms cancel out and it remains to estimate

$$I_1=\epsilon\int_{\R^2}( \Lambda^s\nabla \cdot({\eta{\bf v}})\Lambda^s \eta+\frac{1}{2}\Lambda^s\nabla|{\bf v}|^2\cdot \Lambda^s{\bf v})=\epsilon(I_2+I_3).$$

Lemma \ref{impa} implies that
$$|I_2|\leq  C\|{\bf v}\|_{H^ {s+1}}\|\eta\|_{H^s}^2.$$
Finally, by the Kato-Ponce commutator lemma (\cite{kp}),
$$|I_3| \leq C\|{\bf v}\|_{H^{s+1}}\|{\bf v}\|^2_{H^s}.$$
Combining those estimates leads to the differential inequality
\begin{equation} \label{couveflor1}
\frac{d}{dt}\big\lbrack \|\eta\|_{H^s}^2+\|{\bf v}\|^2_{H^s}
+\epsilon \|{\bf v}\|^2_{H^{s+1}}\big\rbrack
\leq C\epsilon\big(\|{\bf v}\|_{H^{s+1}}\|\eta\|^2_{H^s}+\|{\bf v}\|_{H^s}^2\|{\bf v}\|_{H^{s+1}}\big).
\end{equation}
If we define $Y(t)= \|\eta\|_{H^s}^2+\|{\bf v}\|^2_{H^s}+\epsilon \|{\bf v}\|^2_{H^{s+1}}$,
then inequality \eqref{couveflor1} can
be rewritten as
$$Y'(t)\leq C\epsilon^{1/2} Y(t) ^{3/2},$$
which in turn implies an a priori bound on a time interval  $\lbrack 0,\frac{T}{\sqrt{\epsilon}}\rbrack$ where
$T=T(||\eta_0||_{H^s}, ||{\bf v}_0||_{H^{s+1}})$.\\

\noindent \textit{Energy estimates in the case $a\neq c$.}
We apply the operator $\Lambda^s$ to both equations of  \eqref {BO}, multiply the first one by $\Lambda^s \eta+c\epsilon\Delta\Lambda^s \eta$,
take the  scalar product of the second one by $\Lambda^s {\bf v}+a\epsilon\Delta\Lambda^s  {\bf v}$, integrate and sum to obtain that
\begin{equation} \label{couveflor2}
\begin{split}
&\frac12 \frac{d}{dt}\big\lbrack \|\eta\|_{H^s}^2-c\epsilon\|\nabla \eta\|_{H^s}^2+\|{\bf v}\|^2_{H^s}+(d-a)\epsilon\|{\nabla\bf v}\|^2_{H^s}-ad\epsilon^2 \|\Delta{\bf v}\|^2_{H^{s}}\big\rbrack \\
&=  \int \nabla\cdot \Lambda^s{\bf v} \Lambda^s\eta+c\epsilon\int \nabla\cdot \Lambda^s{\bf v} \Delta\Lambda^s\eta+
 \int \nabla\Lambda^s\eta\cdot \Lambda^s{\bf v} \\
&\quad+a\epsilon \int \nabla\Lambda^s\eta\cdot \Delta\Lambda^s{\bf v}
+a\epsilon\int\nabla\cdot\Delta\Lambda^s{\bf v}\Lambda^s\eta+ac\epsilon^2\int\nabla\cdot\Delta\Lambda^s{\bf v}\Delta\Lambda^s\eta\\
&\quad+c\epsilon\int\nabla\Delta\Lambda^s\eta\cdot\Lambda^s{\bf v}
+ac\epsilon^2\int\nabla\Delta\Lambda^s\eta\cdot\Delta\Lambda^s{\bf v}+\epsilon\int \Lambda^s\nabla\cdot(\eta{\bf v})\Lambda^s\eta\\
&\quad+\epsilon^2 c\int \Lambda^s\nabla\cdot(\eta{\bf v})\Delta\Lambda^s\eta
+\frac{\epsilon} {2}\int\Lambda^s\nabla(|{\bf v}|^2)\cdot\Lambda^s{\bf v}+\frac{\epsilon^2 a}2\int\Lambda^s\nabla(|{\bf v}|^2)\cdot\Delta\Lambda^s{\bf v}\\
&:= \sum_{l=1}^{12}J_l.
\end{split}
\end{equation}
Now, observe integrating by parts that $J_1+J_3=J_2+J_7=J_4+J_5=J_6+J_8=0.$

Moreover, we deduce from Lemma \ref{impa} that
\begin{equation} \label{couveflor3}
|J_9| \le \epsilon \|{\bf v}\|_{H^ {s+1}}\|\eta\|_{H^s}^2,
\end{equation}
and
\begin{equation} \label{couveflor4}
|J_{10}| \le \epsilon^2\|{\bf v}\|_{H^ {s+2}}\|\eta\|_{H^{s+1}}^2.
\end{equation}
On the other hand, it follows integrating by parts and using Lemma \ref{Ponce} that
\begin{equation} \label{couveflor5}
|J_{11}| \le \epsilon \|{\bf v}\|_{H^ {s+1}}\|{\bf v}\|_{H^{s}}^2,
\end{equation}
and
\begin{equation} \label{couveflor6}
|J_{12}| \le \epsilon^2\|{\bf v}\|_{H^ {s+2}}\|{\bf v}\|_{H^{s+1}}^2.
\end{equation}

Finally, let define $Y$ by $$Y(t)=\|\eta\|_{H^s}^2-c\|\nabla \eta\|_{H^s}^2+\|{\bf v}\|^2_{H^s}+(d-a)\epsilon\|{\nabla\bf v}\|^2_{H^s}-ad\epsilon^2 \|\Delta{\bf v}\|^2_{H^{s}}.$$ Then it follows gathering \eqref{couveflor2}--\eqref{couveflor6} that
$$Y'(t)\leq C\epsilon^{1/2} Y(t) ^{3/2},$$
which in turn implies an a uniform in $\epsilon$ priori bound on a time interval $\lbrack 0,\frac{T}{\sqrt{\epsilon}}\rbrack.$
\end{proof}

%$$Y'(t)\leq C\epsilon^{1/2} Y(t) ^{3/2}$$

%which in turn implies an a priori bound on a time interval of length $O(1/\sqrt{\epsilon}).$

\subsection{Schr\"{o}dinger type Boussinesq systems when $d=0$ with curl free velocity field.}

We thus consider the case $a<0,$ $c<0,$ $b>0,$ $d=0.$

%It turns out that the elementary approach by energy estimates
%developped in the previous subsection does not seem to work (in the
%below the case of {\it curlfree} velocities. In the general case  a
%nonlinear nonlocal Schr\"odinger equations with gradient
%nonlinearities in order to apply the general results of \cite{KPV4}

%term on $\textbf{v}$, \textit{i.e.} $d=0$,
\begin{equation}
\label{d=0} \left\lbrace
\begin{array}{l}
\eta_t+\nabla \cdot {\bf v}+\epsilon \lbrack\nabla\cdot(\eta {\bf v})+a \nabla\cdot \Delta{\bf v}-b\Delta \eta_t\rbrack=0 \\
{\bf v}_t+\nabla \eta+\epsilon\lbrack \frac{1}{2}\nabla |{\bf
v}|^2+c\nabla \Delta \eta\rbrack=0.
\end{array}\right.
\end{equation}

We will here restrict the velocity {\bf v} to irrotational motions,
a situation  which, as we already indicated is relevant since the variable ${\bf v}$ in the
Boussinesq systems is curlfree up to a $O(\epsilon^2)$ term. Note also that the curlfree condition
is preserved by the evolution of \eqref{d=0}. When ${\bf v}$ is
curlfree, the term $\frac{1}{2}\nabla |{\bf v}|^2$ in the second
equation of \eqref{d=0} writes  as two transport equations namely $({\bf v}\cdot\nabla v_1, {\bf
v}\cdot\nabla v_2)^T$ where ${\bf v}=(v_1,v_2)^T.$ This will permit
to perform energy estimates on ${\bf v}.$

\begin{theorem}\label{sindicato}  Let  $s>2$ and $0 < \epsilon \le 1$ be given.

(i) Assume that $a=c$. Then, for every $(\eta_0,{\bf v}_0)\in
H^{s+1}(\R^2)\times H^s(\R^2)^2$ with $\curl {\bf v}_0=0$ there
exist $T=T(||\eta_0||_{H^{s+1}}, ||{\bf v}_0||_{H^s})$  and a unique
solution $$(\eta,{\bf v})\in C([0,T_{\epsilon}];
H^{s+1}(\R^2))\times  C([0,T_{\epsilon}]; H^s(\R^2)^2),$$ where
$T_{\epsilon}=T\epsilon^{-1/2}$, of \eqref{BO} with initial data
$(\eta_0,{\bf v}_0),$ which is uniformly bounded on $\lbrack
0,T_{\epsilon}\rbrack.$

(ii) Assume that  $a\neq c$. Then, for every $(\eta_0,{\bf v}_0)\in
H^{s+2}(\R^2)\times H^{s+1}(\R^2)^2$ with $\curl {\bf v}_0=0$  there
exist  $T=T(||\eta_0||_{H^{s+2}}, ||{\bf v}_0||_{H^{s+1}})$ and a
unique solution $$(\eta,{\bf v})\in C([0,T_{\epsilon}];
H^{s+2}(\R^2)) \times  C([0,T_{\epsilon}]; H^{s+1}(\R^2)^2), $$
where $T_{\epsilon}=T\epsilon^{-1/2}$ of \eqref{BO} with initial
data $(\eta_0,{\bf v}_0)$  which is uniformly bounded on $\lbrack
0,T_{\epsilon}\rbrack.$
\end{theorem}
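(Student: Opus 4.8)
The plan is to prove Theorem \ref{sindicato} by the energy method, in direct parallel with Theorem \ref{couveflor}, the essential new feature being that here the BBM-type regularization $-b\Delta\eta_t$ acts on $\eta$ (so $\eta$ becomes the smoother variable) while ${\bf v}$ carries no dispersive smoothing at all; this is precisely what forces the restriction $\curl{\bf v}=0$. Since the evolution of \eqref{d=0} preserves the curlfree condition, one may replace $\frac12\nabla|{\bf v}|^2$ throughout by the transport term $({\bf v}\cdot\nabla){\bf v}=({\bf v}\cdot\nabla v_1,{\bf v}\cdot\nabla v_2)^T$. As for Theorem \ref{couveflor}, I would first establish uniqueness by writing the system satisfied by the difference of two solutions, taking a suitably weighted $L^2$ pairing so that the cubic dispersive contributions cancel, and closing by Gronwall; and I would obtain existence by deriving the a priori energy estimates below on a regularized (e.g.\ frequency-truncated) system, passing to the limit by compactness, and recovering the strong time continuity and the continuity of the flow by the Bona--Smith argument. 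I therefore describe only the energy estimates and the tracking of the powers of $\epsilon$.

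For part (i), $a=c$, I would apply $\Lambda^s$ to both equations, multiply the first by $|a|\Lambda^s\eta$, pair the second with $|a|\Lambda^s{\bf v}$, and sum. All linear contributions cancel (the order-one terms by integration by parts, the cubic dispersive terms because $a=c$), while $-b\Delta\eta_t$ produces $+\frac{|a|b\epsilon}{2}\frac{d}{dt}\|\nabla\eta\|_{H^s}^2$; the natural energy is thus $Y=\|\eta\|_{H^s}^2+\epsilon\|\nabla\eta\|_{H^s}^2+\|{\bf v}\|_{H^s}^2$, which explains why the data lies in $H^{s+1}\times H^s$. Two nonlinear contributions remain. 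The transport term $(\Lambda^s({\bf v}\cdot\nabla){\bf v},\Lambda^s{\bf v})$ is controlled by splitting $\Lambda^s(v_l\partial_{x_l}v_j)=[\Lambda^s,v_l]\partial_{x_l}v_j+v_l\Lambda^s\partial_{x_l}v_j$: the commutator is bounded by Ponce's Lemma \ref{Ponce}, and the remaining piece is integrated by parts, $\int v_l\,\partial_{x_l}\Lambda^sv_j\,\Lambda^sv_j=-\frac12\int(\partial_{x_l}v_l)(\Lambda^sv_j)^2$, giving a bound $\lesssim\|\nabla{\bf v}\|_{L^\infty}\|{\bf v}\|_{H^s}^2$; this is exactly the step needing $\nabla{\bf v}\in L^\infty$, hence $s>2$. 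The coupling term $(\Lambda^s\nabla\cdot(\eta{\bf v}),\Lambda^s\eta)$ I would integrate by parts into $-(\Lambda^s(\eta{\bf v}),\Lambda^s\nabla\eta)$ and estimate using that $H^s$ is an algebra, obtaining $\lesssim\|\eta\|_{H^s}\|{\bf v}\|_{H^s}\|\eta\|_{H^{s+1}}$; since $\|\eta\|_{H^{s+1}}\le\epsilon^{-1/2}Y^{1/2}$, the prefactor $\epsilon$ of the nonlinearity turns this into $\epsilon^{1/2}Y^{3/2}$. Altogether $Y'\lesssim\epsilon^{1/2}Y^{3/2}$, giving the a priori bound on $[0,T\epsilon^{-1/2}]$ and uniform-in-$\epsilon$ boundedness.

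For part (ii), $a\neq c$, the cubic dispersive terms can no longer be cancelled by a scalar multiplier, and I would follow the scheme of Theorem \ref{couveflor}(ii) with the modified multipliers $\Lambda^s\eta+c\epsilon\Delta\Lambda^s\eta$ for the first equation and $\Lambda^s{\bf v}+a\epsilon\Delta\Lambda^s{\bf v}$ for the second. A short computation shows that with these choices the linear terms again cancel in pairs after integration by parts (the analogues of $J_1+J_3=J_2+J_7=J_4+J_5=J_6+J_8=0$), and that the resulting energy is $Y=\|\eta\|_{H^s}^2+\epsilon\|\nabla\eta\|_{H^s}^2+\epsilon^2\|\Delta\eta\|_{H^s}^2+\|{\bf v}\|_{H^s}^2+\epsilon\|\nabla{\bf v}\|_{H^s}^2$ with all coefficients positive (using $a<0$, $c<0$, $b>0$); this accounts for the data being in $H^{s+2}\times H^{s+1}$. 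The nonlinear terms paired with the $\Lambda^s$ parts of the multipliers are estimated as in part (i), and those paired with the $\Delta\Lambda^s$ parts are the analogues of $J_{10}$ and $J_{12}$.

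The step I expect to be the main obstacle is controlling, with the correct power of $\epsilon$, the highest-order coupling term $\epsilon^2(\Lambda^s\nabla\cdot(\eta{\bf v}),\Delta\Lambda^s\eta)$ in part (ii). A naive symmetrization followed by Lemma \ref{impa} at level $s+1$ would demand ${\bf v}\in H^{s+2}$, which is not available in this class. The remedy is to split $\nabla\cdot(\eta{\bf v})={\bf v}\cdot\nabla\eta+\eta\,\nabla\cdot{\bf v}$: the transport part ${\bf v}\cdot\nabla\eta$ is treated by the same commutator-plus-integration-by-parts device as above, paying only $\|\nabla{\bf v}\|_{L^\infty}$ and $\|\eta\|_{H^{s+1}}$; for the part $\eta\,\nabla\cdot{\bf v}$ one must keep the two highest derivatives on $\eta$ (controlled in $H^{s+2}$ at the cost of $\epsilon^{-1}$) and place the remaining factor $\nabla{\bf v}$ (or $\Lambda^s\nabla\cdot{\bf v}$) against a lower $\eta$-norm or in $L^\infty$, never on a norm of ${\bf v}$ beyond $H^{s+1}$. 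Careful bookkeeping then shows each piece is $\lesssim\epsilon^{1/2}Y^{3/2}$, so that again $Y'\lesssim\epsilon^{1/2}Y^{3/2}$ and the lifespan is of order $\epsilon^{-1/2}$. It is exactly at this point that both the hypothesis $s>2$, rather than $s\ge\frac32$, and the curlfree assumption are indispensable.
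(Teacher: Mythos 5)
Your proposal follows essentially the same route as the paper: $\Lambda^s$ energy estimates with a scalar multiplier when $a=c$ and the modified multipliers $\Lambda^s\eta+c\epsilon\Delta\Lambda^s\eta$, $\Lambda^s{\bf v}+a\epsilon\Delta\Lambda^s{\bf v}$ when $a\neq c$ (with the linear terms cancelling in pairs exactly as in Theorem \ref{couveflor}), the curlfree hypothesis converting $\frac12\nabla|{\bf v}|^2$ into a transport term handled by commutator estimates as in Lemma \ref{curlfree}, uniqueness via a weighted $L^2$ pairing plus Gronwall, and existence via regularization, compactness and Bona--Smith. If anything, your bookkeeping is more careful than the paper's sketch: your treatment of the coupling term by integration by parts, paying $\|\eta\|_{H^{s+1}}\lesssim\epsilon^{-1/2}Y^{1/2}$ (and, in part (ii), your splitting $\nabla\cdot(\eta{\bf v})={\bf v}\cdot\nabla\eta+\eta\,\nabla\cdot{\bf v}$ to avoid invoking Lemma \ref{impa} at level $s+1$, which would require the unavailable ${\bf v}\in H^{s+2}$), correctly accounts for the cross term that the displayed bound \eqref{rio} glosses over and that is precisely what yields the $\epsilon^{1/2}Y^{3/2}$ differential inequality consistent with the $\epsilon^{-1/2}$ lifespan.
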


The proof of Theorem \ref{sindicato} is very similar to that of Theorem \ref{couveflor} and we only sketch it.
We will use the following estimate.

\begin{lemma}\label{curlfree}

Let $s>2$ and ${\bf v}\in H^s(\R^2)$ such that $\curl {\bf v}=0.$ Then

$$\Big|\int_{\R^2}\Lambda^s \nabla |{\bf v}|^2\cdot \Lambda^s {\bf
v}\Big|\leq C\|{\bf v}\|^3_{H^s}.$$
\end{lemma}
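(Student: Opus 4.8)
The plan is to use the curl-free hypothesis to convert the seemingly derivative-losing gradient nonlinearity $\nabla|{\bf v}|^2$ into a transport term, which can then be controlled at the right regularity level by integration by parts and a commutator estimate. First I would record the algebraic identity already anticipated in the text preceding the lemma: when $\curl{\bf v}=0$, i.e. $\partial_{x_1}v_2=\partial_{x_2}v_1$, one has $\frac12\partial_{x_1}|{\bf v}|^2=v_1\partial_{x_1}v_1+v_2\partial_{x_1}v_2=v_1\partial_{x_1}v_1+v_2\partial_{x_2}v_1$, and symmetrically for the second component, so that $\frac12\nabla|{\bf v}|^2=({\bf v}\cdot\nabla){\bf v}$ with ${\bf v}\cdot\nabla=v_1\partial_{x_1}+v_2\partial_{x_2}$. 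Consequently
\begin{displaymath}
\frac12\int_{\R^2}\Lambda^s\nabla|{\bf v}|^2\cdot\Lambda^s{\bf v}
=\sum_{j=1,2}\int_{\R^2}\Lambda^s\big(({\bf v}\cdot\nabla)v_j\big)\,\Lambda^s v_j,
\end{displaymath}
which is the form I would estimate term by term.

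For each $j$ I would split the top-order piece off via the commutator decomposition $\Lambda^s(v_k\partial_{x_k}v_j)=v_k\partial_{x_k}\Lambda^s v_j+[\Lambda^s,v_k]\partial_{x_k}v_j$ (summation over $k=1,2$). The transport term is handled by integration by parts: since $\int v_k\partial_{x_k}(\Lambda^s v_j)\,\Lambda^s v_j=-\tfrac12\int(\partial_{x_k}v_k)(\Lambda^s v_j)^2$, its sum over $k$ is bounded by $\tfrac12\|\divg{\bf v}\|_{L^\infty}\|\Lambda^s v_j\|_{L^2}^2$, and the embedding $H^{s-1}(\R^2)\hookrightarrow L^\infty(\R^2)$ valid for $s>2$ gives $\|\divg{\bf v}\|_{L^\infty}\lesssim\|{\bf v}\|_{H^s}$, hence a contribution $\lesssim\|{\bf v}\|_{H^s}^3$. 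For the commutator term I would pair with $\Lambda^s v_j$ and apply Lemma \ref{Ponce} with $p=2$, choosing the exponents $p_1=\infty,\,p_2=2$ and $p_3=2,\,p_4=\infty$, to obtain
\begin{displaymath}
\big\|[\Lambda^s,v_k]\partial_{x_k}v_j\big\|_{L^2}
\lesssim\|\nabla v_k\|_{L^\infty}\|\Lambda^{s-1}\partial_{x_k}v_j\|_{L^2}
+\|\Lambda^s v_k\|_{L^2}\|\partial_{x_k}v_j\|_{L^\infty}.
\end{displaymath}
Both summands are $\lesssim\|{\bf v}\|_{H^s}^2$, again using $H^{s-1}\hookrightarrow L^\infty$ for $s>2$ to control $\|\nabla v_k\|_{L^\infty}$ and $\|\partial_{x_k}v_j\|_{L^\infty}$, while $\|\Lambda^{s-1}\partial_{x_k}v_j\|_{L^2}\lesssim\|{\bf v}\|_{H^s}$; multiplying by $\|\Lambda^s v_j\|_{L^2}$ yields $\lesssim\|{\bf v}\|_{H^s}^3$. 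Collecting the two contributions gives the claimed bound.

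The conceptual heart of the argument—and the only real obstacle—is precisely the derivative count. Estimated naively, $\int\Lambda^s\nabla|{\bf v}|^2\cdot\Lambda^s{\bf v}$ carries $2s+1$ derivatives on a quadratic term and would demand control of $\|{\bf v}\|_{H^{s+1}}$, one derivative more than is available; this is exactly the loss that forces the higher regularity in the $d>0$ theory and that makes the $d=0$ case delicate. The curl-free rewriting is what resolves it: it recasts the gradient as a transport operator whose worst part $({\bf v}\cdot\nabla)\Lambda^s v_j$ becomes antisymmetric after testing against $\Lambda^s v_j$ and drops an order upon integration by parts, leaving only a commutator that Ponce's estimate absorbs at level $H^s$. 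Beyond this structural point the proof is routine; the only bookkeeping to double-check is the admissibility of the Hölder exponents in Lemma \ref{Ponce} and that $s>2$ indeed suffices for every $L^\infty$ embedding invoked.
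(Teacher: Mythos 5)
Your proof is correct and follows essentially the same route as the paper: the curl-free identity $\tfrac12\nabla|{\bf v}|^2=({\bf v}\cdot\nabla){\bf v}$, integration by parts on the transport part, a Kato--Ponce type commutator bound, and the embedding $H^{s-1}(\R^2)\hookrightarrow L^{\infty}(\R^2)$. The only cosmetic difference is that you invoke Lemma \ref{Ponce} (with admissible exponents) where the paper cites the Kato--Ponce commutator lemma directly, which is interchangeable here.
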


\begin{proof}
Let ${\bf v}=(v_1,v_2)^T.$ Then $\partial_{x_1} v_2=\partial_{x_2}
v_1$ and
 $$\frac{1}{2}\nabla|{\bf v}|^2=(v_1\partial _{x_1} v_1+v_2\partial_{x_2} v_1, v_1\partial_{x_1} v_2+v_2\partial_{x_2} v_2)^T.$$
The estimate then follows immediately from integration by parts, the Kato-Ponce commutator lemma and the embedding
 $H^{s-1}(\R^2)\hookrightarrow L^{\infty}(\R^2).$
\end{proof}

\begin{proof}[Proof of Theorem \ref{sindicato}]
\noindent (i) {\it Uniqueness.} Again we consider $\epsilon =1$. Let $(\eta_1,{\bf v}_1),$  $(\eta_2,{\bf v}_2)$ two solutions  and let $N=\eta_1-\eta_2,$
${\bf V}= {\bf v}_1-{\bf v}_2.$ We have therefore
\begin{equation}\label{BOter}
\left\lbrace
\begin{array}{l}
    N_t+\nabla \cdot {\bf V}+\epsilon \lbrack\nabla\cdot(N {\bf v}_1+\eta_2 {\bf V})+a \nabla\cdot \Delta{\bf V}-b \Delta N_t \rbrack=0 \\
    {\bf V}_t+\nabla N+\epsilon\lbrack \frac{1}{2}\nabla( {\bf V}\cdot ( {\bf v}_1+{\bf v}_2))+c\nabla \Delta N\rbrack=0.
\end{array}\right.
\end{equation}
We multiply the first equation by $-cN$, take scalar product of the second one by $-a{\bf V}$, integrate and sum. One obtains
after integrations by parts
\begin{displaymath}
\begin{split}
\frac{1}{2}\frac{d}{dt}&\int_{\R^2}\lbrack |c|N^2+|a||{\bf V}|^2+|c|b|\nabla N|^2\rbrack \\
&\leq C(\|{\bf v}_1\|_{L^{\infty}},\|\eta_2\|_{L^{\infty}}, \|\nabla {\bf v}_1\|_{L^{\infty}},
\|\nabla{\bf v}_2\|_{L^{\infty}})\lbrack \|N\|_{L^2}^2+\|\nabla N\|_{L^2}^2+\|{\bf V}\|_{L^2}^2\rbrack,
\end{split}
\end{displaymath}
and the result follows from Gronwall's lemma.

\vspace{0.3cm}
\noindent (ii) {\it Existence.} We just indicate how to obtain the energy estimates in the case $a=c$. We apply the operator $\Lambda^s$ to both equations
of  \eqref {BO}, multiply the first one by $\Lambda^s \eta$, take the  scalar product of the second one by $\Lambda^s{\bf v}$,
integrate and sum to get using Kato-Ponce Lemma and Lemma \ref{curlfree}

\begin{equation}\label{rio}
\begin{split}
\frac{d}{dt}\lbrack  \|\Lambda ^s \eta\|_{L^2}^2&+ \|\Lambda ^s{\bf
v}\|_{L^2}^2+\epsilon b \|\nabla \Lambda^s \eta\|_{L^2}^2\rbrack
 \\ &\leq C \epsilon\Big|\int_{\R^2} \Lambda^s \nabla\cdot(\eta {\bf v})\Lambda ^s \eta +\Lambda^s \nabla|{\bf v}|^2\cdot
 \Lambda ^s {\bf v}\Big|\\
 &\leq C \epsilon\big( \|\nabla {\bf v}\|_{L^{\infty}}\|\Lambda ^s \eta\|_{L^2}^2+ \|{\bf v}\|^3_{H^s}\big),
 \end{split}
  \end{equation}
from which one obtain the expected a priori uniform estimate on a
time interval $\lbrack 0, T/\sqrt{\epsilon}\rbrack.$

When $a\neq c$ one has to modify the proof as in Theorem
\ref{couveflor}.
\end{proof}

\section{Final comments}

To keep this paper short and to maintain a certain unity, we have refrain here to consider the other Boussinesq systems,
which have eigenvalues $\lambda_{\pm}$ of order $1$ or less. For instance the cases $a=c=b=0,$ $d=1/3$ or $a=c=d=0,$ $b=1/3$
are version of Boussinesq original system. The former case has been studied in the one-dimensional case by Schonbek \cite{Sc}
and Amick \cite{A} who obtained global well-posedness by viewing the system  as a (dispersive) perturbation of the Saint-Venant
(shallow-water) hyperbolic system.

In both cases, no results seem to be available in  two-dimensions.

\begin{merci}
The Authors were partially  supported by the Brazilian-French program in mathematics. J.-C. S. acknowledges support from
the project ANR-07-BLAN-0250 of the Agence Nationale de la Recherche. F. L. was partially supported by CNPq and FAPERJ/Brazil.
\end{merci}


\begin{thebibliography}{8.}
\bibitem{AL} \textsc{B. Alvarez-Samaniego and D. Lannes},
{\it Large time existence for $3D$ water-waves and asymptotics},
Inventiones Math., {\bf 171} (2008), 485--541.
\bibitem{A} \textsc{C.J. Amick}, {\it Regularity and uniqueness of solutions of the Boussinesq system of equations},
J. Diff. Eq., {\bf 54} (1984), 231--247.
\bibitem{BAKS} \textsc{M. Ben-Artzi, H. Koch and J.-C. Saut},
{\it Dispersion estimates for third order equations in two
dimensions}, Comm. Part. Diff. Eq. {\bf 28}, no. 11--12 (2003),
1943--1974.
\bibitem{BL} \textsc {J.  Bergh and J. L\"{o}fstr\"{o}m }, Interpolation Spaces, an Introduction,
Springer-Verlag Berlin, 1976.
\bibitem{BCL} \textsc {J. L.  Bona, T. Colin and D. Lannes}, {\it Long-wave approximation for water waves},
Arch. Ration. Mech. Anal., {\bf 178} (2005), 373--410.
\bibitem{BCS1} \textsc {J. L. Bona, M. Chen and J.-C. Saut}, {\it Boussinesq equations and
other systems for small-amplitude long waves in nonlinear dispersive
media I : Derivation and the linear theory}, J. Nonlinear Sci., {\bf
12} (2002), 283--318.
\bibitem{BCS2} \textsc{J. L. Bona, M. Chen and J.-C. Saut},
{\it Boussinesq equations and other systems for small-amplitude long
waves in nonlinear dispersive media. II : The nonlinear theory},
Nonlinearity, {\bf 17} (2004), 925--952.
\bibitem{BGK} \textsc{J. L Bona, Z. Grujic and H. Kalisch},
{\it A KdV -type Boussinesq system : from energy level to analytic
spaces}, Disc. Cont. Dyn. Syst., {\bf 26}, no. 2 (2010), 1121--1139.
\bibitem{BLS} \textsc{J. L.  Bona, D. Lannes, and J.-C. Saut},
{\it Asymptotic models for internal waves}, J. Math. Pures Appl.,
{\bf 89} (2008), 538-566.
\bibitem{Bou} \textsc{J. V. Boussinesq}, {\it Th\' eorie des ondes et des remous qui se propagent
le long d'un canal rectangulaire horizontal en communiquant au
liquide contenu dans ce canal des vitesses sensiblement pareilles de
la surface au fond}, J. Math. Pures Appl., {\bf 17} (1872), 55--108.
\bibitem{CGK} \textsc{W. Craig, P. Guyenne, and H. Kalisch},
{\it Hamiltonian long-wave expansions for free surfaces and
interfaces}, Comm. Pure. Appl. Math., {\bf 58} (2005), 1587--1641.
\bibitem{CO} \textsc{Y. Cho and T. Ozawa,} {\it On small amplitude solutions to the generalized Boussinesq
equations}, Disc. Cont. Dyn. Syst., {\bf 17} (2007), 691--711
\bibitem{CTA} \textsc{Cung The Anh}, {\it On the Boussinesq/Full dispersion and Boussinesq/Boussinesq systems
for internal waves}, Nonlinear Analysis, {\bf 72}, no. 1 (2010),
409--429.
\bibitem{DD} \textsc{P. Daripa and R.K.Dash},
{\it A class of model equations for bi-directional propagation of
capillary-gravity waves}, Int. J. Eng. Sc., {\bf 41} (2003),
201--218.
\bibitem{DMS1} \textsc {V. Dougalis, D. Mitsotakis and J.-C. Saut},
{\it On some Boussinesq systems in two space dimensions : theory and
Numerical Analysis}, ESAIM : Mathematical Modelling and Numerical
Analysis, {\bf 41}, no. 5, (2007), 825--854.
\bibitem{GPW} \textsc{Z. Guo, L. Peng and B. Wang,} {\it Decay estimates for a class of waves
equations}, J. Funct. Anal., {\bf 254} (2008), 1642--1660.
\bibitem{Grisvard} \textsc{P. Grisvard},
{\it Quelques propri\' et\' es des espaces de Sobolev, utiles dans
l'\' etude des \' equations de Navier-Stokes}, Expos\' e 4 in \lq\lq
Probl\`emes d'\' evolution non lin\' eaires\rq\rq, S\' eminaire de
Nice 1974--1975.
\bibitem{kp} \textsc{T. Kato and G. Ponce}, {\it Commutator estimates and the Euler and Navier-Stokes equations},
Comm. Pure Appl Math., {\bf 41} (1988), 891--907.
\bibitem{KPV} \textsc{C. E. Kenig, G. Ponce and L. Vega}, {\it Oscillatory integrals and regularity of
dispersive equations}, Indiana Univ. Math. J., {\bf 40} (1991),
33--69.
\bibitem{KPV1} \textsc{C. E. Kenig, G. Ponce and L. Vega}, {\it Well-posedness of the
initial-value problem for the Korteweg-de Vries equation}, J. Amer.
Math. Soc., {\bf 4} (1991), 323--347.
\bibitem{KPV2} \textsc{C. E. Kenig, G. Ponce and L. Vega},
{\it Well-posedness and scattering results for the generalized
Korteweg-de Vries equation}, Comm. Pure Appl. Math., {\bf 46}
(1993), 527--620.
\bibitem{KPV5} \textsc{C. E. Kenig, G. Ponce and L. Vega}, {\it Small solutions to nonlinear Schr\"odinger equations},
Ann. Inst. Henri Poincar\'e, {\bf 10} (1993), 255--288.
\bibitem{KPV3} \textsc{C. E. Kenig, G. Ponce and L. Vega}, {\it A bilinear estimate with application to the KdV equation},
J. Amer. math. Soc.,  {\bf 9} (1996), 573--603.
\bibitem{KPV4}  \textsc{C. E. Kenig, G. Ponce and L. Vega},
{\it Smoothing effects and local existence theory for generalized
nonlinear Schr\"{o}dinger equations}, Inventiones Math., {\bf 134}
(1998), 489--545.
\bibitem{KS} \textsc{H. Koch and J.-C. Saut},
{\it  Local smoothing and local solvability for third order
dispersive equations}, SIAM J. Math. Anal.,  {\bf 38}, no. 5,
(2007), 1528--1541.
\bibitem{LP} \textsc{F. Linares and G. Ponce},
Introduction to Nonlinear Dispersive Equations, Universitext,
Springer, New York, 2009.
\bibitem{MSZ} \textsc{M. Ming, J.-C. Saut and P. Zhang},
{\it Long-time existence of solutions to Boussinesq systems}, 2011, submitted.
\bibitem{Po} \textsc{G. Ponce}, {\it Smoothing properties of solutions to the Benjamin-Ono equation},
Lect. Note Pure Appl. Math., {\bf 122}, Dekher, New York, (1990), 667--679.
\bibitem{ST} \textsc{J.-C. Saut and N. Tzvetkov},
{\it On a model for the oblique interaction of internal gravity
waves}, Math. Model. Numer. Anal., {\bf 34} (2000), 501--523.
\bibitem{Sc} \textsc{M.E. Schonbek}, {\it Existence of solutions for the Boussinesq system of equations},
J. Diff. Eq., {\bf 42} (1981), 325--352.
\bibitem{St} \textsc{E. M. Stein}, Singular integrals and differentiability properties of functions,
Princeton Mathematical Series, No. 30 Princeton University Press,
Princeton, N.J., 1970
\bibitem{SW} \textsc{E. M. Stein and G. Weiss}, Introduction to Fourier Analysis in Euclidian Spaces,
Princeton University Press, Princeton, N.J., 1971.
\end{thebibliography}
\end{document}